\newtheorem{thm}{Theorem}[section]
\theoremstyle{plain}
\newtheorem{claim}[thm]{Claim}
\newtheorem{cor}[thm]{Corollary}
\newtheorem{prop}[thm]{Proposition}
\newtheorem{lem}[thm]{Lemma}
\theoremstyle{definition}
\newtheorem{defn}[thm]{Definition}
\newtheorem{example}[thm]{Exampls}
\theoremstyle{remark}
\theoremstyle{remark}
\newtheorem{rem}[thm]{Remark}
\theoremstyle{definition}
\newtheorem{problem}[thm]{Problem}
\DeclareMathOperator{\diam}{diam}
\DeclareMathOperator{\safe}{safe}
\begin{document}

\begin{frontmatter}[classification=text]
%% EDITOR: this will force the keywords to appear right after the Abstract.
%%   If the abstract is too long and would force the keywords off the
%%   front page, please comment out % [classification=text] above
%%   This way the keywords will be floated on the bottom of the first page
%%   even though the Abstract spills over to the next page.

%%% AUTHOR: Title goes here.  This line is optional.  You must use it
%%   if title has footnote attached or requires nontrivial typesetting,
%%   e.g., inclusion of linebreaks to force nice layout.
%\title{Short Proof of R\"odl's $n^{\log\log n}$ Bound\footnote{This is a footnote to the title}} %% please capitalize all significant words

%%% AUTHOR:
%%% List all authors. If you wish, place grant acknowledgements in \thanks.
%%% In brackets include a short tag for each author.
\author[hochman]{Michael Hochman\thanks{Research supported by ISF research grant 3056/21.}}

%%% AUTHOR: Abstract goes here
\begin{abstract}
We show that there exist $\mathbb{Z}^{2}$ symbolic systems that are
strongly irreducible and have no (fully) periodic points.

\end{abstract}
\end{frontmatter}

%%% AUTHOR: body of paper starts here

\section{Introduction}

\subsection{Statement of the main result}

Many mixing conditions have been introduced in the theory of $\mathbb{Z}^{d}$
symbolic dynamics, with the goal of finding a class of systems that
behaves analogously to one-dimensional case. This line of investigation
probably begins with Ruelle's notion of specification \cite{Ruelle1973},
which was arose in his study of the thermodynamic formalism. Other
aspects of the theory have lead to various other notions: block gluing,
corner gluing, uniform filling and square mixing (e.g. \cite{Desai2009,RobinsonSahin2001,Lightwood2003,Pavlov2015,BrecenoMcGoffPavlov2018,GangloffSablik2021}). 

The strongest and perhaps most natural notion is that of strong irreducibility
(e.g. \cite{BurtonSteif1994}): Given\textbf{ $E,F\subseteq\mathbb{Z}^{d}$},
one says that a $\mathbb{Z}^{d}$-subshift $X\subseteq A^{\mathbb{Z}^{d}}$
admits \textbf{gluing} along\textbf{ $E,F$ }if for every $x,y\in X$
there exists a point $z\in X$ such that $z|_{E}=x|_{E}$ and $z|_{F}=y|_{F}$.
In this case $z$ is called a gluing of $x|_{E},y|_{F}$. A subshift\textbf{
$X$ }is \textbf{strongly irreducible (SI) with gap $g\geq0$ }if
it admits gluing along every pair of sets $E,F$ satisfying $d(E,F)>g$\textbf{.}

Strongly irreducible systems behave in many respects like one-dimensional
mixing shifts of finite type, and in $\mathbb{Z}^{2}$ the resemblance
is even stronger. They also exhibit fewer logical complications than
shifts of finite type. We summarize these similarities in the table
below:

\noindent %
\begin{tabular}{|>{\centering}m{4cm}|>{\centering}m{1cm}|>{\centering}m{1cm}|>{\centering}m{3.5cm}|>{\centering}m{3.5cm}|}
\hline 
\hline 
\noalign{\vskip\doublerulesep}
 & $\mathbb{Z}$-SFT & $\mathbb{Z}^{d}$-SFT & Strongly Irreducible

$\mathbb{Z}^{d}$-subshift & Strongly Irreducible

$\mathbb{Z}^{d}$-SFT\tabularnewline[\doublerulesep]
\hline 
\noalign{\vskip\doublerulesep}
Language is 

decidable & \Checkmark{} & \XSolid{}  & Undefined (requires

recursive description) & \Checkmark{} \cite{HochmanMeyerovitch2010}\tabularnewline[\doublerulesep]
\hline 
\noalign{\vskip\doublerulesep}
Mixing implies unique measure of 

maximal entropy  & \Checkmark{} & \XSolid{} & No, but fully 

supported & No, but fully 

supported \cite{BurtonSteif1994}\tabularnewline[\doublerulesep]
\hline 
\noalign{\vskip\doublerulesep}
Total transitivity 

implies mixing & \Checkmark{} & \XSolid{} & Automatic & Automatic\tabularnewline[\doublerulesep]
\hline 
\noalign{\vskip\doublerulesep}
Factor onto lower 

entropy full shifts & \Checkmark{} & \XSolid{} & \Checkmark{} \cite{HuczekKopacz2021} & \Checkmark \cite{Desai2009}\tabularnewline[\doublerulesep]
\hline 
\noalign{\vskip\doublerulesep}
Subshifts that factors 

into $X$ embed in $X$ & \Checkmark{} & \XSolid{} & \Checkmark{} \cite{Bland2022} & \Checkmark{} \cite{Lightwood2003,Lightwood2004}\tabularnewline[\doublerulesep]
\hline 
\noalign{\vskip\doublerulesep}
Periodic points 

exist / are dense  & \Checkmark{} & \XSolid{} & \textbf{THIS PAPER} & \Checkmark{} if $d\leq2$, \cite{Lightwood2003,Lightwood2004}

\textbf{?~} if $d\geq3$~~~~~~~~~~~~~~\tabularnewline[\doublerulesep]
\hline 
\end{tabular}

In the present paper we consider the question, raised in \cite{CeccheriniSilbersteinCoonaers2012},
of existence of periodic points in SI systems (without assuming the
SFT property). Here, a \textbf{periodic point} means a point with
finite orbit under the shift action. For $\mathbb{Z}$-subshifts,
an elegant argument by Bertrand \cite[Theorem 1]{Bertrand1988} and,
independently,  Weiss \cite[Theorem 5.1]{CeccheriniSilbersteinCoonaers2012}
shows that strongly irreducible $\mathbb{Z}$-subshifts admit dense
periodic points (see \cite{CeccheriniSilbersteinCoonaers2012}), and
as we have noted, the same is true in $\mathbb{Z}^{2}$ if we add
an assumption that the subshift is an SFT. 

Joshua Frisch has pointed out to us that the question of existence
of periodic points in SI $\mathbb{Z}^{2}$ subshifts is heuristically
related to the question of their existence in SI SFTs in $\mathbb{Z}^{3}$.
Indeed, if $X$ is a SI $\mathbb{Z}^{2}$-subshift, then for every
$k$, the $\mathbb{Z}$-subshift consisting of restrictions of its
configurations to $\mathbb{Z}\times\{1,\ldots,k\}$ (with $\mathbb{Z}$
acting by horizontal shift) is again SI, so by Weiss's result it contains
$\mathbb{Z}$-periodic points. Similarly, if $Y$ is a SI $\mathbb{Z}^{3}$
SFT, then for every $k$ one can consider locally admissible configurations
on $\mathbb{Z}^{2}\times\{1,\ldots,k\}$, and this is a SI $\mathbb{Z}^{2}$
SFT, so by Lightwood's result it contains $\mathbb{Z}^{2}$-periodic
points. In both cases, there is no obvious way to extend this lower-dimensional
periodicity to periodic points of the original system, but it lends
some support to the idea that periodic points may exist.

Our main result is 
\begin{thm}
\label{thm:main}For every $d\geq2$, there exist strongly irreducible
$\mathbb{Z}^{d}$ subshifts without periodic points.
\end{thm}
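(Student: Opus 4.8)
The plan is to reduce to $d=2$ and then build a $\mathbb{Z}^2$ example directly.

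\emph{Reduction.} Suppose $X\subseteq A^{\mathbb{Z}^2}$ is strongly irreducible with gap $g$ and has no periodic points. For $d>2$ let $Y\subseteq A^{\mathbb{Z}^d}$ be the subshift of all $y$ whose coordinate-plane slices $y|_{\mathbb{Z}^2\times\{c\}}$, $c\in\mathbb{Z}^{d-2}$, regarded as elements of $A^{\mathbb{Z}^2}$, lie in $X$; this is nonempty. It is strongly irreducible with the same gap: if $y,y'\in Y$ and $d(E,F)>g$, then for each $c$ the slices $E_c=\{v:(v,c)\in E\}$ and $F_c=\{v:(v,c)\in F\}$ still satisfy $d(E_c,F_c)>g$, so one glues $y$ and $y'$ inside $X$ on each slice separately and reassembles the results into a point of $Y$. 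And $Y$ has no periodic points: a point fixed by a finite-index subgroup $L\leq\mathbb{Z}^d$ is in particular fixed by $L\cap(\mathbb{Z}^2\times\{0\})$, which, being the kernel of the projection of $L$ to the last $d-2$ coordinates, has rank $2$ and hence finite index in $\mathbb{Z}^2$, so every slice of the point would be a periodic point of $X$. It therefore suffices to treat $d=2$.

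\emph{The subshift.} Since any strongly irreducible $\mathbb{Z}^2$ SFT has periodic points (Lightwood), the example cannot be an SFT, so infinitely many forbidden patterns must be used in an essential way. Fix scales $N_1\mid N_2\mid\cdots$ with $N_k\to\infty$, growing rapidly and cofinal under divisibility --- e.g.\ $N_k=k!$, so that every positive integer divides some $N_k$ --- a large integer $c$, and a large finite alphabet $A$. Let $X\subseteq A^{\mathbb{Z}^2}$ be defined by forbidding, for every $k$, each pattern on a $cN_k\times cN_k$ square that is invariant under the translations $(N_k,0)$ and $(0,N_k)$ within that square; these patterns are periodic ``with room to spare,'' and having this room (encoded by the size of $c$) is what makes the gluing argument below work. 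Then $X$ is nonempty (compactness, plus the standard fact that arbitrarily large patterns avoiding any prescribed finite set of periodicities exist), and $X$ has no periodic points: a point fixed by a finite-index sublattice is $M\mathbb{Z}^2$-invariant for some $M$, hence, taking $k$ with $M\mid N_k$, also $N_k\mathbb{Z}^2$-invariant, so every $cN_k\times cN_k$ window of it is forbidden.

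\emph{Strong irreducibility.} The remaining --- and principal --- task is to show $X$ is strongly irreducible for a suitable gap $g$. Given $x,y\in X$ and $E,F$ with $d(E,F)>g$, set $z=x$ on $E$, $z=y$ on $F$, and fill $R=\mathbb{Z}^2\setminus(E\cup F)$ so that no forbidden square occurs. A forbidden $cN_k\times cN_k$ square inside $E$, or inside $F$, is already excluded because $x,y\in X$. A forbidden square meeting both $E$ and $F$ also meets $R$ in a set separating its $E$-part from its $F$-part, so either $x|_E$ and $y|_F$ are already jointly incompatible with every $(N_k,0),(0,N_k)$-periodic pattern on that square, or one breaks the periodicity using a cell of $R$ inside it. A forbidden square meeting $R$ and only one of $E,F$ is handled the same way: because $x$, resp.\ $y$, already lies in $X$, and because of the margin $c$, the part of its data on that square cannot complete to a genuinely $N_k$-periodic pattern unless a cell of $R$ inside the square is free to break it. The difficulty, and where the margin $c$ and the growth of $(N_k)$ are used, is to do all of this simultaneously over all scales $k$ --- including scales $N_k$ far larger than the gap --- with one consistent filling of $R$. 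I expect to accomplish this by a compactness/greedy construction, filling larger and larger finite portions of $R$ while maintaining the absence of forbidden squares, the key quantitative point being that any threatened periodicity can be destroyed by a bounded local modification inside $R$ without creating a new forbidden square nearby. Calibrating $A$, $c$, the sequence $(N_k)$, and the gap $g$ so that this procedure closes up is the heart of the argument, and the step I expect to be most delicate.
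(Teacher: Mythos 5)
Your reduction to $d=2$ is correct and is the same one the paper uses, and your aperiodic subshift is indeed nonempty and periodic-point-free. But the entire content of the theorem is the strong irreducibility, and there you have only a hope, not a proof: you explicitly defer "calibrating $A$, $c$, $(N_k)$ and $g$ so that the procedure closes up" as "the step I expect to be most delicate." That step is not a technicality --- it is the theorem.

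Worse, the specific construction you propose is precisely the one the paper identifies as problematic. Forbidding every $N_k$-periodic $cN_k\times cN_k$ window is the same as requiring that every such window contain an $N_k$-aperiodic pair, i.e.\ your $X$ is a subshift of the form $X_{(R_n)}$ from the paper's Section 1.2 (with $R_n\approx\min\{cN_k : n\mid N_k\}$). The paper's Example in that section constructs, by a diagonalization over all possible fillings of the gap region $[-g,g]^2$, a point $x\in X_{(R_n)}$ such that no refilling of the gap avoids creating an exactly periodic window at some scale; so for some parameter choices such subshifts are provably not SI, and whether any choice of $(R_n)$ makes $X_{(R_n)}$ SI is stated there as an open question. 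The underlying obstruction is that a gluing imposes one constraint per straddling window per scale --- infinitely many constraints involving symbols arbitrarily far away --- while you control only the finitely many cells of $R$; your "bounded local modification" heuristic does not survive this diagonalization. The paper's actual proof circumvents this with substantially more structure: a non-encoded "certificate" of frames, boxes and witness sets at every scale, an unorientability condition from a coin-and-bucket game that makes the aperiodic pairs robust when individual witnesses are relocated to residue classes one does not control, and a sparsity/safe-path mechanism guaranteeing that the number of witnesses that must be relocated is proportional to the boundary of $E$. None of this machinery, or a substitute for it, appears in your argument, so the proposal has a genuine gap at its core.
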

We prove the theorem for $d=2$, but this implies it for all $d>2$
by considering the subshifts whose $2$-dimensional slices belong
to our example. Our construction is non-canonical and far from optimal
in every sense. We obtain a result for gap $10$ and a very large
alphabet, and have made no attempt to minimize these parameters. But
we see no obstruction to there existing examples with gap $1$ and
alphabet $\{0,1\}$.

We also note there do not exist so many ``non-trivial'' examples
of SI subshifts -- previous examples either had ``free symbols''
or ``free subsets of symbols'', or were factors of systems with
such symbols. Our example does not rely on this mechanism.

.

\subsection{\label{subsec:Discussion}Discussion}

A point $x\in A^{\mathbb{Z}^{2}}$ is aperiodic if and only if, for
each $n\in\mathbb{N}$, there exists a pair $(u_{n},v_{n})\in\mathbb{Z}^{2}\times\mathbb{Z}^{2}$
with $u_{n}=v_{n}\bmod n$, and $x_{u_{n}}\neq x_{v_{n}}$. Such a
pair will be called an \textbf{$n$-aperiodic pair} \textbf{for }$x$.
The set of configurations that posses an $n$-aperiodic pair for a
given $n$, or for all $n$, is closed under shifts, but not under
taking limits. To ensure that the orbit closure of $x$ contains no
periodic points, one must require that there exists a sequence of
sets $\mathcal{W}_{n}\subseteq\mathbb{Z}^{2}\times\mathbb{Z}^{2}$
and a sequence $R_{n}>0$ such that 
\begin{itemize}
\item \textbf{Aperiodicity: }Every $(u,v)\in\mathcal{W}_{n}$ is an $n$-aperiodic
pair for $x$.
\item \textbf{Syndeticity: }Every closed $R_{n}\times R_{n}$-square contains
$u,v$ for some $(u,v)\in\mathcal{W}_{n}$.
\end{itemize}
If we are given the sequence $R_{n}\rightarrow\infty$, the set $X=X_{(R_{n})}$
of all configurations that admit sets $\mathcal{W}_{n}$ as above
is an aperiodic subshift, and if $R_{n}\rightarrow\infty$ quickly
enough then $X\neq\emptyset$. 

Subshifts of the form $X_{(R_{n})}$ are never SI in dimension $d=1$,
for, by Bertrand/Weiss's result, if $X_{(R_{n})}$ were SI it would
contain periodic points, which it does not. Our main theorem shows
that this argument cannot be applied in higher dimensions, i.e. the
non-existence of periodic points in $X_{(R_{n})}$ does not in itself
rule out the possibility that $X_{(R_{n})}$ is SI. However, for at
least some sequences $R_{n}$, we are able to show that the subshift
$X_{(R_{n})}$ is not SI. 
\begin{example}
\label{example}For $A=\{0,1\}$ there exists a sequence $R_{n}\rightarrow\infty$
such that $X_{(R_{n})}\subseteq\{0,1\}^{\mathbb{Z}^{2}}$ is not SI. 
\end{example}
The construction is given in Section \ref{sec:Example}. It demonstrates
well why controlling the effects of gluing is so difficult: when a
finite region is cut out of a configuration and another pattern glued
in its place, the change potentially can create periodicity at any
one of infinitely many ``scales''. Thus, if one wants to glue in
a way that prevents periodicity, one potentially must satisfy infinitely
many constraints, arising from symbols arbitrarily far away. At the
same time, we are free to re-define only finitely many symbols near
the boundary where the gluing occurred. So there are not always enough
degrees of freedom to ensure aperiodicity.

We note that the example above produced very special sequences $R_{n}$.
We do not know if some sequences $(R_{n})$ can give rise to SI subshifts
$X_{(R_{n})}$. Even if the answer is negative, it seems possible
that subshifts defined as above can lead to SI aperiodic systems in
the following sense. Given a rule $S$ for gluing configurations together,
every subshift $X$ defines a strongly irreducible closure $\left\langle X\right\rangle _{S}=\bigcup_{n=1}^{\infty}X_{n}$,
where $X_{1}=X$ and $X_{n+1}$ is obtained by gluing patterns from
$X_{n}$ in all possible ways using $S$. One can then ask if there
is a sequence $R_{n}>1$ and a gluing rule $S$ such that $\left\langle X_{(R_{n})}\right\rangle _{s}$
is aperiodic?

\subsection{Ideas from the construction}

In order to obtain an aperiodic and SI subshift we will have to strengthen
the aperiodicity and syndeticity assumptions above, thus making the
aperiodicity more ``robust''. 

Our problem is as follows. When $z$ is a gluing of $x|_{E}$ and
$y|_{F}$, it may happen that some pair $(u,v)\in E\times F$ is an
$n$-aperiodic pair for $x$ or for $y$ (i.e. $u=v\bmod n$, and
$x_{u}\neq x_{v}$ or $y_{u}\neq y_{v}$) but not for $z$ (i.e. $z_{u}=x_{u}=y_{v}=z_{v}$). 

Since we are free to re-define the symbols in $(E\cup F)^{c}$, we
can try to resolve this by creating a new aperiodic pair, e.g. by
``moving'' $u$ to the set $(E\cup F)^{c}$, whose size is at least
proportional to the boundary of $E$. But this strategy faces two
problems:
\begin{enumerate}
\item There may not be enough space in $(E\cup F)^{c}$ to create new $n$-aperiodic
pairs for every pair that was destroyed. Indeed, we can only assume
that $(E\cup F)^{c}$ is proportional to the length of the boundary
of $E$, whereas, potentially, the number of sites belonging to $n$-aperiodic
pairs in $E$ may be proportional to the size of $E$ itself. Actually
the situation is not quite this bad, since we need not worry about
pairs $(u,v)$ for which both $u$ and $v$ are in the same set $E$
or $F$, so, assuming we bound the distance between $u,v$ by some
$R_{n}$, we need ``only'' worry about $n$-aperiodic pairs that
lie within $R_{n}$ of the boundary of $E$. Nevertheless, since $R_{n}$
grows with $n$, when $E$ is finite the number of pairs that need
fixing easily exceeds the number of sites we can use to fix them.

Consequently, in addition to the syndeticity condition on $n$-aperiodic
pairs, we will impose a complementary ``sparseness'' condition to
ensure that the number of $n$-aperiodic pairs $(u,v)\in E\times F$
is at most proportional to the size of the boundary.
\item When one member $u\in E$ of an aperiodic pair $(u,v)\in E\times F$
is ``moved'' from $E$ to a site $u'\in(E\cup F)^{c}$, we may not
be able to ensure that $u=u'\bmod n$, so, even though we may be able
to ensure different symbols at $u'$ and $v$, the pair $(u',v)$
is not an $n$-aperiodic pair. Instead, we could try to pair $u'$
with a new site $v'$ to get an $n$-aperiodic pair, but it may happen
that all points that agree with $u'$ modulo $n$ are already part
of an aperiodic pair, or that this cannot be done without violating
the sparsity condition discussed in (1). 

To avoid this problem, we will not work with aperiodic pairs $(u,v)$
as above. Instead, introduce larger ``aperiodic sets'' $W=\{u_{1},\ldots,u_{N_{n}}\}\subseteq\mathbb{Z}^{2}$
which will be required to always contain an aperiodic pair among its
members. The point of working with a larger collection of sites is
that if $W$ is large enough then there are guaranteed to be pairs
in it lying in the same residue class mod $n$. Thus, if some element
of $W$ is replaced by another, we can hope to choose the symbol at
the new site so as to make sure that $W$ still contains an $n$-aperiodic
pair.
\end{enumerate}
We have arrived at the following strategy. For a given gap $g>0$,
we will define a subshift consisting of configurations $x$ that admit
a sequence $\mathcal{W}_{n}$, with each $\mathcal{W}_{n}$ consisting
of subsets $W\subseteq\mathbb{Z}^{2}$ of size $N_{n}$, with the
following properties:
\begin{itemize}
\item \textbf{Aperiodicity:} Every $W\in\mathcal{W}_{n}$ contains a pair
$u\neq v$ with $u=v\bmod n$ and $x_{u}\neq x_{v}$ Furthermore,
the pattern $x|_{W}$ will be ``robust'' in the sense that its properties
can be preserved despite local changes.
\item \textbf{Syndeticity: }For every closed $R_{n}$-ball $B$ there exists
$W\in\mathcal{W}_{n}$ with $W\subseteq B$.
\item \textbf{Sparsity: }For every finite region $E$, the set $\{u\in\mathbb{Z}^{2}\setminus E\mid d(u,E)\leq g\}$
is larger than $\sum_{n}|E\cap(\cup\mathcal{W}_{n})|$ .
\end{itemize}
These conditions are good approximations of the ones we will actually
use. 

It is important to keep in mind that \textbf{the structure $(\mathcal{W}_{n})$
and other auxiliary structure supporting it will not be encoded in
the configurations of the subshift}. This makes the structure flexible,
and when gluing configurations these structures may change also far
from the gluing boundary. We shall call this auxiliary structure a
\textbf{certificate}. Thus, in order to belong to our subshift, a
point must admit a compatible certificate, and our task will be to
show that, when two configurations $x,y$ are glued together to give
$z$, we can simultaneously ``glue'' their certificates to give
an admissible certificate for $z$.

\subsection{Standing notation: $a\gg b$}

Throughout the paper we write $a\gg b$ for ``$a$ sufficiently large
relative to $b$''. Thus, a statement of the form ``If $a\gg b$,
then\ldots '' means ``For every $b$, if $a$ is sufficiently large
in a manner depending on $b$, then\ldots ''. A statement such as
``Since $a\gg b$ we have\ldots '' means ``Since we are free to
choose $a$ arbitrarily large relative to $b$, we can choose it large
enough that\ldots ''.

\subsection{Organization of the paper}

In Section \ref{sec:Coins-and-buckets} we explain how to address
the robustness alluded to in paragraph (2) above. 

In Section \ref{subset:sparse-obstacle-definitions} we explain the
sparsity requirement we impose in order to satisfy paragraph (1) above.

In Section \ref{sec:Main-construction} we describe the construction
in detail.

In Section \ref{sec:Strong-irriducibilty} we show that the resulting
subshift is strongly irreducible.

Section \ref{sec:Concluding-remarks} contains further remarks and
open problems.

\subsection{Acknowledgment}

I would like to thank Joshua Frisch for bringing this problem to my
attention. I am also grateful to the anonymous referees for suggesting
many corrections to the first version of the manuscript.

\section{\label{sec:Example}Example \ref{example}}

Our goal is to show that there exists a sequence $R_{n}\rightarrow\infty$
such that $X_{(R_{n})}\subseteq\{0,1\}^{\mathbb{Z}^{2}}$ is not SI.
The subshift $X_{(R_{n})}$ was defined in the Section \ref{subsec:Discussion}.

The framework for the construction is as follows. We will define an
integer sequence $n_{k}\rightarrow\infty$ such that every integer
$\ell$ divides some $n_{k}$, and another integer sequence $r_{k}\rightarrow\infty$,
with $r_{k}$ a multiple of $n_{k}$. We say that an $r_{k}\times r_{k}$
pattern is acceptable (with respect to $r_{1},\ldots,r_{k}$ and $n_{1},\ldots,n_{k}$,
whose mention we shall usually leave implicit) if for every $m\leq k$,
every $r_{m}\times r_{m}$ subpattern of $a$ admits an $n_{m}$-aperiodic
pair. Set 
\[
R_{n}=\min\{r_{k}\mid k\in\mathbb{N}\:,\:n|n_{k}\}
\]
We then take $X_{(R_{n})}$ as be our example. 
\begin{claim}
If $x\in\{0,1\}^{2}$ contains an $r_{k}\times r_{k}$ subpattern
$b$ that is not acceptable, then $x\notin X_{(R_{n})}$. 
\end{claim}
\begin{proof}
Indeed, let $1\leq m\leq k$ be an index such that $b$ does not contain
an $n_{m}$-periodic pair. Then $R_{n_{m}}=r_{m}$, because $n_{m'}<n_{m}$
for $m'<m$, and hence $n_{m}$ cannot divide any $n_{m'}$ with $m'<m$,
and hence $b$ is a pattern on an $R_{n_{m}}\times R_{n_{m}}$-ball
that does not contain any $n_{m}$-aperiodic pairs, showing that $x\notin X_{(R_{n})}$.
\end{proof}
We now turn to the construction proper. Fix a one-dimensional aperiodic
sequence $z\in\{0,1\}^{\mathbb{N}}$ and an auxiliary sequence $s_{n}\rightarrow\infty$
with the property that every subword of length $s_{n}$ of $z$ contains
an $n$-aperiodic pair. A square pattern will be said to be $z$-lined
if the pattern along each of its sides, read in the counter-clockwise
direction and excluding the corners, is an initial segment of $z$.
Every square $N\times N$ pattern can be extended to a $z$-lined
pattern of dimensions $(N+2)\times(N+2)$. 

We now define the sequences $n_{1}<n_{2}<\ldots$ and $r_{1}<r_{2}<\ldots$
inductively, satisfying $r_{m}\geq2s_{n_{m}}+2$, together with $r_{k}\times r_{k}$-patterns
$x^{(k)}$ that are centered at the origin $0=(0,0)$, are acceptable
in the sense defined above, and such that $x^{(k)}$ extends $x^{(k-1)}$.
The definition is as follows:

Let $n_{1}=1$ and $r_{1}=2s_{1}+3$. Let $x^{(1)}$ be any centered
$r_{1}\times r_{1}$-pattern with the symbol zero at the origin (i.e.,~$x_{0}^{(1)}=0$)
and also containing the symbol $1$ (so it is $1$-aperiodic). Trivially,
$x^{(1)}$ is acceptable, since the only $r_{1}\times r_{1}$ sub-pattern
of $x^{(1)}$ is $x^{(1)}$ itself and we chose it to be $1$-aperiodic.

Suppose that $n_{1}<\ldots<n_{k}$, $r_{1}<\ldots<r_{k}$, and $x^{(1)},\ldots,x^{(k)}$
have been defined as stated. Choose the minimal $g_{k}\in\mathbb{N}$
and, given $g_{k}$, the lexicographically least pattern $b^{(k)}\in\{0,1\}^{[-g_{k},g_{k}]^{2}}$,
such that the pair $(g_{k},b^{(k)})$ satisfy 
\[
b_{0}^{(k)}=1
\]
and such that the $r_{k}\times r_{k}$ pattern $y^{(k)}$ given by
\[
y^{(k)}=\left\{ \begin{array}{cc}
y_{u}^{(k)}=x_{u}^{(k)} & u\notin[-g_{k},g_{k}]^{2}\\
b_{u}^{(k)} & u\in[-g_{k},g_{k}]^{2}
\end{array}\right.
\]
is acceptable. To justify taking the least such pair note that such
pairs $(g_{k},b^{(k)})$ exists, e.g. one is obtained by taking $g_{k}=\frac{1}{2}(r_{k}-1)$
and flipping all bits in $x^{(k)}$, so $b_{u}^{(k)}=1-x_{u}^{(k)}$. 

Let 
\[
n_{k+1}=k(r_{k}+2)
\]
(in particular, $k|n_{k+1}$), and let 
\[
r_{k+1}\geq2s_{n_{k}}+2
\]
be any odd multiple of $n_{k+1}$.

To define $x^{(k+1)}$, first extend $x^{(k)},y^{(k)}$ to $z$-lined
patterns $\widehat{x}^{(k)},\widehat{y}^{(k)}$ of dimensions $(r_{k}+2)\times(r_{k}+2)$.
Then extend $\widehat{x}^{(k)}$ to a centered $r_{k+1}\times r_{k+1}$
pattern by surrounding $\widehat{x}^{(k)}$ with translates of $\widehat{y}^{(k)}$.
\begin{claim}
$x^{(k+1)}$ is acceptable.
\end{claim}
\begin{proof}
Just observe that
\begin{itemize}
\item The only $r_{k+1}\times r_{k+1}$ sub-pattern is $x^{(k+1)}$ itself,
and $\widehat{x}_{0}^{(k)}=0\neq1=\widehat{x}_{(0,n_{k+1})}^{(k+1)}$
(because $(0,n_{k+1})$ is the center of a copy of $\widehat{y}^{(k)}$).
So $0=(0,0)$ and $(0,n_{k+1})$ are an $n_{k}$-aperiodic pair.
\begin{itemize}
\item For $1\leq m\leq k$, if $b$ is an $r_{m}\times r_{m}$ sub-pattern
of $\widehat{x}^{(k+1)}$, then either it is a sub-pattern of $x^{(k)}$
or of (a translate of) $y^{(k)}$, in which case $b$ contains an
$n_{m}$-aperiodic pair because $x^{(k)},y^{(k)}$ are acceptable,
or else $b$ intersects the boundary of $\widehat{x}^{(k)}$ or of
(a translate of) $\widehat{y}^{(k)}$ in a segment of $z$ of length
at least $r_{m}/2$, in which we are guaranteed, be definition of
$z$ and $s_{m}$, to see a initial segment of $z$ of length $\geq r_{m}/2-1=s_{m}$,
which contains an $n_{m}$-aperiodic pair.
\end{itemize}
\end{itemize}
\end{proof}
\begin{claim}
\label{claim:bk-cannot-be-inserted-in-xk-plus-1}If we replace the
central $[-g_{k},g_{k}]^{2}$ pattern in $x^{(k+1)}$ with $b^{(k)}$
(equivalently, replace the central copy of $x^{(k)}$ with $y^{(k)}$),
then the resulting pattern in no longer acceptable.
\end{claim}
\begin{proof}
The resulting pattern is tiled by copies of $\widehat{y}^{(k)}$,
whose dimensions are $n_{k+1}\times n_{k+1}$, and hence contains
no $n_{k+1}$-aperiodic pair. 
\end{proof}
\begin{cor}
The sequence $(g_{k},b^{(k)})$ is strictly increasing in the lexicographic
order on pairs $(g,b)$ with $g\in\mathbb{N}$ and $b\in\{0,1\}^{[-g,g]2}.$
In particular, $g_{k}\rightarrow\infty$.
\end{cor}
\begin{proof}
The previous claim shows that the sequence $b^{(k+1)}\neq b^{(k)}$,
and in fact, since the central $r_{m}\times r_{m}$ pattern in $x^{(k)}$
is $x^{(m)}$, it shows that $b^{(k)}\neq b^{(m)}$ for all $m<k$.
Thus $(g_{k},b^{(k)})$ does not contain any repetitions. The fact
that it is strictly increasing now follows from our choice of the
minimal candidate at each stage.
\end{proof}
Since $x^{(k+1)}$ extends $x^{(k)}$ and these are centered square
patterns of increasing side length, these patterns increases to a
configuration $x\in\{0,1\}^{\mathbb{Z}^{2}}$. 
\begin{claim}
$x\in X_{(R_{n})}$. In particular, $X_{(R_{n})}\neq\emptyset$.
\end{claim}
\begin{proof}
Given $n$, by definition there exists $k$ with $R_{n}=r_{k}$ and
$n|n_{k}$. Every $R_{n}\times R_{n}$-subpattern $a$ of $x$ is
a sub-pattern of some $x^{(m)}$ with $m>k$, and since $x^{(m)}$
is acceptable, $a$ must contain an $n_{k}$-aperiodic pair, and hence
an $n$-aperiodic pair (since $n|n_{k}$).
\end{proof}
\begin{claim}
$X_{(R_{n})}$ is not SI .
\end{claim}
\begin{proof}
Fix $g\in\mathbb{N}$ and let us show that $X_{(R_{n})}$ is not SI
with gap $g$. Specifically, we claim that if we set  $x_{0}=1$,
then there is no way to redefine $x$ on $[-g,g]^{2}\setminus\{0\}$
in a manner that the resulting configuration $x'$ belongs to $X_{(R_{n})}$. 

Indeed, let $b\in\{0,1\}^{[-g,g]^{2}}$ with $b_{0}=1$ and let $x'$
be as above. Then either $b\neq b^{(k)}$ for all $k$, which by definition
means that the central $r_{k}$-pattern in $x'$ is not acceptable
for any $k$ with $g_{k}>g$, or else $b=b^{(k)}$ for some $k$,
in which case by Claim \ref{claim:bk-cannot-be-inserted-in-xk-plus-1},
the central $r_{k+1}\times r_{k+1}$-pattern in $x'$ is not acceptable.
In both cases $x'\notin X_{(R_{n})}$.
\end{proof}
This concludes the construction.

\section{\label{sec:Coins-and-buckets}Coins and buckets (a combinatorial
lemma)}

In this section we analyze a combinatorial game which arises in our
context when $x$ is a configuration, $W\in\mathcal{W}_{n}$ are as
described at the end of the introduction, and a site from $W$ is
moved to a new location which we do not control, but we are able to
choose the symbol of $x$ at the new location. We want to ensure that
after the move the set $W$ still contains two sites that are equal
modulo $n$ but carry different symbols.

This leads us to the following model, in which the buckets represent
residue classes, coins represent sites $u\in W$, and the value $x_{u}$
is represented by the orientations of the coin (Heads or Tails; we
assume a binary symbol). We note that in our application later on,
the buckets will represent other data in addition to the residue classes.
\begin{itemize}
\item A \textbf{coin-and-bucket configuration }is an arrangement of finitely
many coins in finitely many buckets. 
\item Each coin has an \textbf{orientation }of ``heads'' or ``tails''. 
\item Configurations are modified by applying one or more \textbf{moves}
in sequence. A move\textbf{ }consists of choosing a bucket $B$ and
orientation $\sigma$, removing from $B$ a coin of orientation $\sigma$,
and placing it in a bucket $B'$ with orientation $\sigma'$, where
$\sigma'$ must be equal to the less common orientation among the
coins that were in the destination bucket before the transfer, unless
there was a tie, in which case we can specify the orientation of the
moved coin. 

A move is legal for a given configuration if the source bucket contains
a coin of the desired orientation.
\item A bucket is \textbf{oriented} if all the coins in it have the same
orientation. A configuration is oriented if all its buckets are oriented
(the orientation can depend on the bucket).
\item A configuration is \textbf{orientable} if there exists a legal sequence
of moves that, when applied to the configuration, leads to an oriented
configuration. If no such sequence exists, the configuration is \textbf{unorientable}.
\end{itemize}

Note that when coins are moved from one bucket to another we often
do not control the orientation they will receive. In particular, after
a coin is added to a non-empty bucket, that bucket is not oriented.
Observe also that that legal moves do not change the total number
of coins in a configuration, and that in each bucket the coins of
a given orientation are interchangeable, in the sense that if one
of them is to be moved, it does not matter which one it is.

One can interpret this as a game. The first player sets up the initial
configuration, and the second player applies a sequence of moves (choosing
orientations as he pleases when relevant). The second player wins
if he manages to reach an oriented configuration.

Let $c_{k,n}$ denote the configuration consisting of $k$ buckets,
all of which are empty except for the first, which contains $n_{1}=n$
coins, out of which $n_{2}=\left\lfloor n_{1}/2\right\rfloor $ are
heads and $\left\lceil n_{1}/2\right\rceil $ are tails. Consider
the following sequence of moves starting from $c_{k,n}$. First, move
$n_{2}$ heads from the first bucket to the second bucket, one by
one, orienting them as ``tails'' whenever there is a choice. Thus,
when the first coin lands in an empty bucket its orientation is tails,
the second coin is forced to be heads, the next is again tails, and
so on. When all $n_{2}$ coins have been transferred, we will have
$n_{3}=\left\lfloor n_{2}/2\right\rfloor $ heads and $\left\lceil n_{2}/2\right\rceil $
tails in the second bucket. Now move all heads from the second bucket
to the third in the same manner, and continue, ending in the $k$-th
bucket. At the end $\left\lceil n_{i}/2\right\rceil $ tails remain
in the buckets $i=1,\ldots,k-1$, and $n_{k}$ coins remain in the
last bucket. The final configuration is  oriented if and only if $n_{k}\leq1$,
and, tallying how many coins are left in each bucket, we have shown
that $c_{k,n}$ is orientable if $n\leq2^{k+1}-1$. Conversely,
\begin{prop}
\label{prop:coins-and-buckets-1}If $n\geq2^{k}$ then $c_{k,n}$
is unorientable.
\end{prop}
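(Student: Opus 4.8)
The plan is to reformulate everything in terms of a single numerical quantity and then induct on the number of buckets $k$. For a bucket $B$ holding $h_B$ heads and $t_B$ tails write $\mu_B=\min(h_B,t_B)$ for the number of \emph{minority} coins in $B$, and set $\operatorname{def}(C)=\sum_B\mu_B$; thus $C$ is oriented exactly when $\operatorname{def}(C)=0$. A first, routine step is to read off from the move rule how a move can affect $\operatorname{def}$: any legal move changes $\operatorname{def}$ by $-1$, $0$, or $+1$, and it can \emph{lower} $\operatorname{def}$ only if it removes a non-(strict-majority) coin from a bucket $B$ and places it in a balanced (possibly empty) bucket $B'$; call these \emph{discharging} moves. (The point is that a coin placed on a strictly unbalanced bucket is forced onto the minority side there, which never lowers $\operatorname{def}$, and removing a strict-majority coin never lowers $\mu_B$.) Since $\operatorname{def}(c_{k,n})=\lfloor n/2\rfloor\ge 2^{k-1}$ whenever $n\ge 2^{k}$, the Proposition follows from the stronger statement: \emph{every orientable configuration with $k$ buckets satisfies $\operatorname{def}\le 2^{k-1}-1$}. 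This bound is sharp, as the greedy strategy above applied to $c_{k,2^{k}-1}$ shows, so one cannot hope to do better by crude estimates.

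For the induction, the base case $k=1$ is immediate: no move is legal, so an orientable one-bucket configuration is already oriented. For the inductive step, fix an orientable $k$-bucket configuration $C$, a shortest orienting sequence $m_1,\dots,m_N$ with intermediate configurations $C=C_0,C_1,\dots,C_N$, and let $B^{\ast}$ be a bucket whose last involvement in the sequence, at time $\tau$, is as early as possible (if some bucket is never involved, take $\tau=0$). After time $\tau$ the bucket $B^{\ast}$ is never touched, so it is oriented in $C_\tau$, and $m_{\tau+1},\dots,m_N$ is an orienting sequence for the $(k-1)$-bucket configuration obtained by deleting $B^{\ast}$ from $C_\tau$; by the inductive hypothesis $\operatorname{def}(C_\tau)\le 2^{k-2}-1$. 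It therefore suffices to prove the ``halving'' bound $\operatorname{def}(C_0)\le 2\operatorname{def}(C_\tau)+1$, since then $\operatorname{def}(C_0)\le 2(2^{k-2}-1)+1=2^{k-1}-1$, closing the induction. (The halving bound must be established directly, independently of the $k$-induction, as the case $k=2$, where $\tau=N$, shows.)

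I expect the halving bound to be the main obstacle. Its heuristic justification is precisely the mechanism underlying the greedy strategy: a discharging move makes its balanced target bucket $B'$ unbalanced, so before $B'$ can be used as a discharging target again it must be re-balanced, and re-balancing $B'$ by adding a coin forces that coin onto the minority side of $B'$ and raises $\mu_{B'}$; hence every unit of deficiency discharged ``through'' $B'$ is paid for by a comparable build-up of deficiency inside $B'$ — and, because the discharges and re-balancings alternate, the build-up is at least about half of what was pushed through, so a single auxiliary bucket can at best roughly halve the deficiency. Converting this picture into the clean inequality $\operatorname{def}(C_0)\le 2\operatorname{def}(C_\tau)+1$ is delicate: $\operatorname{def}$ is genuinely \emph{not} monotone under legal moves (a coin forced onto an unbalanced bucket can raise it), so there is no per-bucket monovariant to fall back on, and one must keep a careful account, over the whole prefix $m_1,\dots,m_\tau$ and over all $k$ buckets at once, of the pairing between discharging moves and the re-balancing moves they force, while the coins are being permuted among the buckets in an essentially arbitrary order. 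Making this bookkeeping work — perhaps by tracking, for each bucket, the deficiency routed through it against the deficiency it accumulates, and telescoping — is where the real work lies.
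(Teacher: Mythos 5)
Your reduction to the statement ``every orientable $k$-bucket configuration has $\operatorname{def}\le 2^{k-1}-1$'' and the inductive frame around it are sound: $B^{\ast}$ is indeed oriented in $C_\tau$, the tail $m_{\tau+1},\dots,m_N$ is a legal orienting sequence for the $(k-1)$-bucket configuration obtained by deleting $B^{\ast}$, and so the inductive hypothesis gives $\operatorname{def}(C_\tau)\le 2^{k-2}-1$. But the proof is not complete: everything rests on the halving bound $\operatorname{def}(C_0)\le 2\operatorname{def}(C_\tau)+1$, which you state, motivate heuristically, and explicitly leave unproven (``where the real work lies''). That inequality is not a bookkeeping afterthought --- it carries essentially the entire content of the proposition. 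Two concrete reasons the heuristic does not convert into a proof as stated: (i) a discharging target $B'$ need not be re-balanced by \emph{adding} a coin to its minority side; one can instead \emph{remove} its majority coin, which restores the tie in $B'$ without raising $\mu_{B'}$ at all and merely exports the imbalance to another bucket, so there is no forced local ``payment'' for each discharge; (ii) on the prefix $m_1,\dots,m_\tau$ \emph{all} $k$ buckets are available as discharge targets, so the picture of ``a single auxiliary bucket can at best halve the deficiency'' does not obviously apply to the interval $[0,\tau]$. The accounting you defer --- tracking imbalances as they are shuffled arbitrarily among all $k$ buckets --- is exactly the difficulty of the original problem.

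For comparison, the paper resolves this difficulty by normalizing the move sequence rather than by a potential-function argument: it passes to a variant of the game with a central ``pile'', shows that all moves touching a given bucket $B$ can be replaced by a single canonical block (insert $\ell_B+\ell_B'$ coins, then extract $\ell_B'$) placed at the time $B$ reaches its ``peak state'', and checks that the serialized sequence is still legal. Once the moves are serialized bucket by bucket, the recursion $p_i\le 2p_{i+1}+1$ on pile sizes --- the legitimate form of your halving intuition --- falls out in a few lines. To salvage your route you would need a comparably careful surgery on the move sequence to prove the halving bound, and you should first check whether it is even true in the generality you need: your target statement is strictly stronger than the paper's, which only treats initial configurations with all coins in a single bucket.
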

We begin the proof. Assume that $k,n$ are given and that $m_{1}\ldots m_{N}$
is a sequence of legal moves taking $c=c_{k,n}$ to an oriented configuration
$c'$. Our goal is to deduce that $n\leq2^{k}-1$. Our strategy will
be to reduce the given moves to a different sequence, in which each
bucket is dealt with separately; then we will be able to directly
analyze the situation similarly to the example above.

\subsubsection*{A modified game}

To simplify analysis, we first modify the rules of the game. Introduce
a special bucket called ``the pile'', which serves as a temporary
holding place for coins. While in the pile, coins do not have a specific
orientation. We only allow moves that transfer a coin from a bucket
to the pile, or from the pile to a bucket. The game begins with all
coins in the pile, and must end in an empty pile and oriented buckets.

We translate the original list of moves to this new game as follows:
\begin{enumerate}
\item Append $n$ new moves $m_{-n},\ldots,m_{-1}$ to the start of the
sequence, each one taking a coin from the pile to the first bucket,
requiring it to be set to ``tails'' if relevant.
\item An original move that transfers a coin from bucket $B$ to bucket
$B'$ is replaced by two moves: one taking a coin from $B$ to the
pile, and the other taking a coin from the pile to $B'$.
\end{enumerate}
Evidently, starting from $n$ coins on the pile and $k$ empty buckets,
the first $n$ moves in the updated sequence result in the configuration
$c$, and the remaining moves bring us to $c'$. 

We keep the same notation, using $m_{1},\ldots,m_{N}$ for the updated
sequence of moves. Our goal is still to show that $n\leq2^{k}-1$.

\subsubsection*{Final states and how to reach them}

For each bucket $B$, let 
\[
\ell_{B}=\#\text{ of coins in }B\text{ in the final configuration }c'
\]
and let $\sigma_{B}$ denote their common orientation (if $B$ is
empty in $c'$ set $\sigma_{B}=$tails). Let $\sigma'_{B}\neq\sigma_{B}$
denote the complementary orientation, and set 
\[
\ell'_{B}=\max\{\ell_{B}-1,0\}
\]
Define the sequence of moves $\underline{m}_{B}=m_{B,1},\ldots,m_{B,\ell_{B}+2\ell'_{B}}$
as follows:
\begin{itemize}
\item The first $\ell_{B}+\ell'_{B}$ moves insert coins into $B$, with
the provision that if relevant, the orientation is $\sigma_{B}$.
\item The last $\ell'_{B}$ moves remove $\sigma'_{B}$-oriented coins from
$B$.
\end{itemize}
Then, if we apply $\underline{m}_{B}$ starting from a configuration
in which $B$ is empty and there are at least $\ell_{B}+\ell'_{B}$
coins in the pile, then the net result is that $\ell_{B}$ coins are
transferred to $B$ with orientation $\sigma_{B}$. Note that no shorter
sequence of moves can achieve this result.

\subsubsection*{Peak states }

We say that $B$ is in its \textbf{peak state} if the number $p_{B}$
of $\sigma_{B}$-coins in it is maximal relative to all states it
passes through in the course of the game, and, subject to this, the
number $p'_{B}$ of $\sigma'_{B}$-coins in it is maximal. Note that
$p_{B}\geq\ell_{B}$.

Let $t_{B}$ denote the maximal integer $1\leq t_{B}\leq N$ such
that, after performing the move $m_{t_{B}}$, the bucket is in its
peak state, but was not before. Observe that the move at time $t_{B}$,
if it exists, must move a coin to $B$, so the times $t_{B}$ are
distinct for different buckets, provided that $p_{B}>0$, while if
$p_{B}=0$, then the bucket remains empty throughout, and we can delete
all such buckets and reduce $k$. Thus, we may assume all $t_{B}$
to be distinct. 

\subsubsection*{Separating moves of different buckets}

We now perform a sequence of changes on the list of moves. In order
not to confuse matters by frequent re-indexing, we think of the index
$i$ of the move $m_{i}$ as part of the move, rather than its position
in the list; so after deleting other moves and/or inserting moves,
the move will still carry the index $i$ (new moves do not receive
an index at this point). Thus, if we began with $m_{1}m_{2}m_{3}m_{3}$,
insert two new moves $m',m''$ in the middle, and delete $m_{2}$,
we are left with the sequence $m_{1}m'm''m_{3}m_{4}$, so $m_{3}$
refers to the fourth element of the sequence.

We now modify the list of moves as follows: for each bucket $B$ in
turn, replace $m_{t_{B}}$ with the sequence of moves $\underline{m}_{B}$
above, and delete all other moves involving $B$.

Then the new sequence is a list of legal moves taking the initial
configuration to $c'$. Indeed, it is clear that if the sequence is
legal, then it results in $c'$. To see that it is indeed legal, one
should note that we are moving fewer than $p_{B}+p'_{B}$ moves into
$B$ and doing so no early than $t_{B}$, so at times before $t_{B}$,
the pile is larger than it previously was; thus, there will be no
shortage of coins for moves involving other buckets, or for the moves
that put coins into $B$. Similarly, we return the maximal number
of coins to the pile after time $t_{B}$, so there will be no shortage
afterwards.

\subsubsection*{Completing the proof}

The situation now is this: starting from the configuration $c$ of
empty buckets and $n$ coins in the pile, we have $k$ sequences of
moves, $m^{(1)},\ldots,m^{(k)}$, such that 
\begin{enumerate}
\item $m^{(i)}$ moves $\ell_{B_{i}}+\ell'_{B_{i}}$ coins from the pile
to $B_{i}$ and then returning $\ell'_{B_{i}}$ coins of orientation
$\sigma_{B_{i}}$ to the pile,
\item The concatenated sequence $m=m^{(1)}m^{(2)}\ldots m^{(k)}$ takes
$c$ to the oriented configuration $c'$. 
\end{enumerate}
Let $p_{i}$, $i=1,\ldots,k$, denote the number of coins in the pile
after completing the $i$-th sequence $m^{(i)}$ and $p_{0}=n$. Then
\begin{itemize}
\item The block of moves $m^{(i)}$ starts with a pile of size $p_{i-1}$,
and it is legal and that transfers $\ell_{B_{i}}+\ell'_{B_{i}}$ coins
from the pile to $B_{i}$, so 
\[
p_{i-1}\geq\ell_{B_{i}}+\ell'_{B_{i}}
\]
\item $\ell_{B_{i}}\leq\ell'_{B_{i}}+1$ so 
\[
p_{i-1}\geq2\ell_{B_{i}}-1
\]
\item $p_{i}=p_{i-1}-\ell_{B_{i}}$, so
\begin{align*}
2p_{i} & =2p_{i-1}-2\ell_{B_{i}}\\
 & \geq2p_{i-1}-(p_{i-1}+1)\\
 & =p_{i-1}-1
\end{align*}
\item $\ell_{k}\leq1$ and $p_{k}=0$, so $p_{k-1}\leq1$. Using the relation
$p_{i}\leq2p_{i+1}+1$ from the previous step, it follows inductively
that
\[
p_{k-i}\leq2^{i-1}+2^{i}+\ldots+1=2^{i}-1
\]
and in particular
\[
n=p_{0}\leq2^{k}-1
\]
\end{itemize}
This concludes the proof.

\section{Paths avoiding sparse obstacles (more combinatorics)}

In this section we give a condition that, when satisfied by a family
of obstacles in the plane, ensures that every point sufficiently far
from the obstacles lies on an infinite, relatively flat polygonal
path that avoids the obstacles entirely, and such that this property
persists under changes to obstacles sufficiently far from the point. 

\subsection{\label{subset:sparse-obstacle-definitions}Definitions and main statement }

We introduce the following definitions. All shapes are in $\mathbb{R}^{2}$
and widths, heights etc.,~are real numbers.
\begin{itemize}
\item A \textbf{(rectilinear) rectangle }of dimensions $w\times h$ is a
translate of $[0,w]\times[0,h]$. 
\item A \textbf{(rectilinear) diamond} of dimensions $w\times h$ is a translate
of the closed convex hull of the points $(\pm w/2,0),(0,\pm h/2)$.
The points in a diamond with maximal and minimal $y$-coordinate are
called the north and south poles, and the horizontal line segment
passing through its center of mass is called its equator.
\item Given a bounded measurable set $E\subseteq\mathbb{R}^{2}$ of positive
Lebesgue measure and $c>0$, denote by $cE$ the convex set that is
a translate of $\{cx\mid x\in E\}$ and has the same center of mass
as $E$. 
\item Given a collection $\mathcal{E}$ of bounded convex sets and $c>0$,
we say that $\mathcal{E}$ is $c$\textbf{-sparse} if for every $E,E'\in\mathcal{E}$
with $E\neq E'$, we have $E\cap cE'=cE\cap E'=\emptyset$ .
\item A \textbf{polygonal graph $\gamma\subseteq\mathbb{R}^{2}$ }is the
graph of a piecewise linear function $f:\mathbb{R}\rightarrow\mathbb{R}$.
If the absolute values of slopes of all line segments in $\gamma$
is $\leq\alpha$ we say that $\gamma$ has \textbf{slope} $\leq\alpha$
(equivalently, $f$ is $\alpha$-Lipschitz).
\item A \textbf{polygonal path }will mean a polygonal graph in some (not
necessarily standard) coordinate system
\end{itemize}

The remainder of this section is devoted to the proof of the following
theorem.
\begin{prop}
\label{prop:safe-paths}Let $w_{1},\ldots,w_{N}>0$ and $h_{1},\ldots,h_{N}\geq0$.
Also denote $h_{0}=w_{0}=0$. If
\begin{align*}
h_{n} & \gg w_{n-1},h_{n-1}\hfill\text{~~~ for }n=1,\ldots,N\\
w_{n} & \gg h_{n},w_{n-1}\hfill\text{~~~ for }n=1,\ldots,N
\end{align*}
then the following holds: To every sequence $\mathcal{R}=(\mathcal{R}_{1},\ldots,\mathcal{R}_{N})$,
where each $\mathcal{R}_{n}$ is an $80$-sparse collection of rectilinear
rectangles of dimensions $w_{n}\times h_{n}$, we can associate a
set $\safe(\mathcal{R})\subseteq\mathbb{R}^{2}$, the set of \textbf{safe
points }relative to $\mathcal{R}$, such that
\begin{enumerate}
\item $\mathbb{R}^{2}\setminus\bigcup_{n=1}^{N}\bigcup_{R\in\mathcal{R}_{n}}20R\subseteq\safe(\mathcal{R}_{1},\ldots,\mathcal{R}_{N})\subseteq\mathbb{R}^{2}\setminus\bigcup_{n=1}^{N}\bigcup_{R\in\mathcal{R}_{n}}2R$
\item Every vertical line segment of length $8h_{N}$ intersects $\safe(\mathcal{R})$.
\item Every $p\in\safe(\mathcal{R})$ lies on a polygonal graph $\gamma\subseteq\safe(\mathcal{R})$
of slope $\leq h_{1}/w_{1}$.
\item For every $p\in\safe(\mathcal{R})$, if $\mathcal{R}'=(\mathcal{R}'_{1},\ldots,\mathcal{R}'_{N})$
with $\mathcal{R}'_{n}$ a $40$-sparse family of rectilinear $w_{n}\times h_{n}$
rectangles, and if $\mathcal{R}'$ agrees with $\mathcal{R}$ near
$p$ in the sense that
\[
\{R\in\mathcal{R}_{n}\mid p\in40R\}=\{R'\in\mathcal{R}'_{n}\mid p\in40R'\}
\]
then $p\in\safe(\mathcal{R}')$.
\end{enumerate}
\end{prop}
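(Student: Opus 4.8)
The plan is to build the safe set by induction on $N$, peeling off the largest scale $n=N$ first and treating the obstacles $\mathcal{R}_N$ as "big" compared to everything below. At scale $N$ the rectangles $w_N \times h_N$ are very wide and relatively short (since $w_N \gg h_N$), so they act as nearly-horizontal barriers; the idea is that a point can always dodge them by moving vertically a bounded amount and then travelling along a gently sloped polygonal graph that weaves between them. Concretely, I would first define, for a single $80$-sparse family $\mathcal{R}_N$ of $w_N \times h_N$ rectangles, a "corridor function": for each $x\in\mathbb{R}$ the set of admissible heights $y$ such that $(x,y)$ stays outside $\bigcup_{R\in\mathcal{R}_N} cR$ for an appropriate dilation constant $c$ between $2$ and $20$. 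Sparseness of $\mathcal{R}_N$ at factor $80$ guarantees the dilated rectangles $cR$ are pairwise far apart relative to their own size, so in any vertical window of height $8h_N$ there is room to pass, and moreover a point can reach such a passage by a path of slope $\le h_1/w_1$ — this is where the growth hypotheses $w_n \gg h_n$ are used, to make the allowed slope small enough that the horizontal distance $w_N/(h_1/w_1)$ needed to climb over an obstacle is still tiny compared to the horizontal spacing between obstacles.

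The key structural step is to combine scales. Having defined $\safe$ for $\mathcal{R}_N$ alone, I would restrict attention to the region it carves out and then recurse: inside each corridor the lower-scale families $\mathcal{R}_1,\ldots,\mathcal{R}_{N-1}$ still form sparse collections, and one can apply the inductive hypothesis in a (possibly rotated/sheared) coordinate system adapted to the local direction of the corridor. The crucial bookkeeping is to keep track of the dilation constants so that the two inclusions in item (1) survive the recursion: at each level one loses a factor in the dilation constant, so the induction must be set up with enough slack that after $N$ steps the inner constant is still $\ge 2$ and the outer one is still $\le 20$. The growth conditions $h_n \gg w_{n-1}, h_{n-1}$ and $w_n \gg h_n, w_{n-1}$ are exactly what allow scale $n$ to be treated as "coarse" relative to all lower scales, so that the corridors at scale $n$ are wide enough to contain complete structures at scales $<n$ without interference.

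For item (3), once a safe point $p$ is fixed, I would produce its polygonal graph by concatenating: inside the innermost corridor use the scale-$1$ graph of slope $\le h_1/w_1$ provided by the base case, and check that the transitions between corridors at successive scales can be made without exceeding this slope — again because the allowed slope is governed by the smallest scale, while the detours needed at larger scales are proportionally negligible. Item (4), the stability/locality statement, is the one I expect to be the main obstacle: I would prove it by arranging the construction of $\safe(\mathcal{R})$ to be \emph{local} in a precise sense — the corridor through $p$ at each scale $n$ depends only on those $R \in \mathcal{R}_n$ with $p \in cR$ for the relevant $c \le 40$ — so that replacing $\mathcal{R}$ by $\mathcal{R}'$ agreeing with it in the sense $\{R\in\mathcal{R}_n \mid p\in 40R\} = \{R'\in\mathcal{R}'_n \mid p\in 40R'\}$ leaves the entire nested corridor structure through $p$ unchanged. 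The difficulty is that the polygonal graph through $p$ is a global object, so I must show that only its behaviour in a bounded neighbourhood of $p$ is pinned down by the construction, while far from $p$ it can be rechosen freely inside whatever corridors $\mathcal{R}'$ provides; making "the corridor depends only on nearby obstacles" rigorous, with the dilation constants $20$ vs.\ $40$ vs.\ $80$ chosen consistently, is the delicate part and will dictate the precise inequalities hidden in the $\gg$'s.
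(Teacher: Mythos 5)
Your high-level intuition (treat the largest scale as coarse, exploit the rapid growth of $h_n,w_n$, and make the construction local so that item (4) follows) matches the spirit of the paper's argument, but the mechanism you propose has genuine gaps. The first is the bookkeeping of dilation constants. You acknowledge that ``at each level one loses a factor in the dilation constant'' and hope to arrange enough slack that after $N$ steps the inner constant is still $\geq2$ and the outer one still $\leq20$. But $N$ is unbounded while the constants $2,20,40,80$ in the statement are absolute, so a per-level multiplicative loss cannot be absorbed. The loss must be measured additively in absolute units and dominated by a geometric series: in the paper, the total enlargement of a scale-$n$ obstacle caused by all scales $k<n$ is at most $10\sum_{k<n}h_{k}<h_{n}$ vertically (and similarly horizontally), because $h_{n}\gg h_{n-1}$, so the inflation factor stays bounded by a constant uniformly in $N$. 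Your framing does not deliver this.

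Second, and more importantly, you never address the interaction between scales when obstacles are adjacent --- e.g.\ a $w_{1}\times h_{1}$ rectangle sitting on the boundary of (a dilate of) a $w_{N}\times h_{N}$ rectangle, or a chain of small rectangles running along the side of a large one. Recursing ``inside each corridor'' presupposes that the lower-scale families sit cleanly inside the scale-$N$ corridors, which they need not; and passing to a ``rotated/sheared coordinate system adapted to the local direction of the corridor'' destroys the hypothesis that the $\mathcal{R}_{k}$ are rectilinear $w_{k}\times h_{k}$ rectangles, so your inductive hypothesis no longer applies. The paper's key device, absent from your proposal, is to replace each rectangle by a diamond of slope $h_{n}/w_{n}$ and to \emph{merge} nearby obstacles across scales, from the largest scale down, into single ``double jigsaw'' regions via a maximum of piecewise-linear functions (Lemma \ref{lem:bound-on-adjoining-effect}, Definition \ref{def:hulls}); $\safe(\mathcal{R})$ is then simply the complement of the union of these merged regions. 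Since each merged region has boundary slopes $\leq h_{1}/w_{1}$ and size comparable to its largest constituent, the polygonal graph of item (3) is obtained by travelling horizontally and detouring around entire merged regions --- no recursion into corridors and no change of coordinates is needed --- and items (1), (2) and (4) follow from Proposition \ref{prop:properties-of-hulls}, in particular from the fact that each merge only alters the hull inside a $5w_{n}\times5h_{n}$ box around the merged diamond.
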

When applying the theorem, we often identify $(\mathcal{R}_{1},\ldots,\mathcal{R}_{N})$
with $\mathcal{R}=\bigcup_{n=1}^{N}\mathcal{R}_{n}$. This is an abuse
of notation since the latter does not always determine the former.
If one think of the union as a multiset (with points having multiplicity
counting how many times they appear in the union) then, with $w_{h},h_{n}$
given, there is no ambiguity.

We shall apply the proposition later to collections of rectangles
in arbitrary orthonormal coordinate systems. Since this section is
devoted to the proof of the proposition above, we work only with rectilinear
rectangles (and diamonds), and therefore omit the qualifier ``rectilinear''
for convenience.

The remainder of this section is devoted to the proof of Proposition
\ref{prop:safe-paths}.

\subsection{Jigsaws }
\begin{defn}
A \textbf{jigsaw }is a non-negative piecewise linear function whose
domain is a bounded interval in $\mathbb{R}$. An $\alpha$-jigsaw
is a jigsaw of slope $\leq\alpha$ (equivalently, $\alpha$-Lipschitz).

If $p=(x_{0},y_{0})\in\mathbb{R}^{2}$ with $y_{0}\geq0$, define
the function 
\[
\Delta_{p,\alpha}(x)=\max\{0,y_{0}-\alpha|x-x_{0}|\}
\]
This is the $\alpha$-jigsaw supported on $[x_{0}-y_{0}/\alpha,x_{0}+y_{0}/\alpha]$.
Its graph on this interval is an isosceles triangle whose base is
the interval above, whose sides are of slope $\pm\alpha$, and whose
peak is at $p$.
\end{defn}
\begin{lem}
\label{lem:bound-on-adjoining-effect}Let $f$ be a jigsaw and $p=(x_{0},y_{0})\in\mathbb{R}^{2}$
with $y_{0}\geq f(x_{0})$. Suppose that $\beta>\alpha$ and set 
\[
r=\frac{y_{0}-f(x_{0})}{\beta-\alpha}
\]
Suppose that $f$ has slope at most $\alpha$ on $I=[x_{0}-r,x_{0}+r]$.
Then 
\[
g=\max\{f,\Delta_{p,\beta}\}
\]
is a jigsaw such that
\begin{itemize}
\item $f\leq g\leq f+\Delta_{(x_{0},y_{0}-f(x_{0})),\beta-\alpha}$ .
\item $g|_{\mathbb{R}\setminus I}=f|_{\mathbb{R}\setminus I}$ .
\item The slope of $g$ on $I$ is bounded by $\beta$ .
\end{itemize}
\end{lem}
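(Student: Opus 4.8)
The plan is to reduce all of the assertions to a single pointwise inequality between the two tent functions involved, and then to prove that inequality by splitting into the cases $x\in I$ and $x\notin I$. Write $h_{0}=y_{0}-f(x_{0})\ge 0$, so that $r=h_{0}/(\beta-\alpha)$ and $\Delta_{(x_{0},h_{0}),\beta-\alpha}$ is the $(\beta-\alpha)$-jigsaw of height $h_{0}$ peaked at $x_{0}$, supported \emph{exactly} on $I=[x_{0}-r,\,x_{0}+r]$. Since $g=\max\{f,\Delta_{p,\beta}\}$ is a pointwise maximum of two non-negative, piecewise linear, boundedly supported functions, it is again of that kind, i.e.\ a jigsaw, and $f\le g$ is immediate. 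The remaining three claims all follow from the inequality
\[
\Delta_{p,\beta}(x)\ \le\ f(x)+\Delta_{(x_{0},h_{0}),\beta-\alpha}(x)\qquad(x\in\mathbb{R}).
\]
Indeed, combined with the trivial $f\le f+\Delta_{(x_{0},h_{0}),\beta-\alpha}$ this yields $g\le f+\Delta_{(x_{0},h_{0}),\beta-\alpha}$; and for $x\notin I$ the added tent vanishes, so the inequality reads $\Delta_{p,\beta}(x)\le f(x)$, whence $g(x)=\max\{f(x),\Delta_{p,\beta}(x)\}=f(x)$, i.e.\ $g$ and $f$ agree off $I$.

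To establish the displayed inequality I would argue as follows. For $x\in I$, the slope hypothesis on $I$ gives $f(x)\ge f(x_{0})-\alpha|x-x_{0}|$, while the explicit formula gives $\Delta_{(x_{0},h_{0}),\beta-\alpha}(x)\ge h_{0}-(\beta-\alpha)|x-x_{0}|$; adding these two linear lower bounds yields $f(x)+\Delta_{(x_{0},h_{0}),\beta-\alpha}(x)\ge (f(x_{0})+h_{0})-\beta|x-x_{0}|=y_{0}-\beta|x-x_{0}|$, and since the left-hand side is also $\ge 0$ it dominates $\max\{0,\,y_{0}-\beta|x-x_{0}|\}=\Delta_{p,\beta}(x)$. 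For $x\notin I$ we must check $\Delta_{p,\beta}(x)\le f(x)$: this is clear when $\Delta_{p,\beta}(x)=0$, and otherwise $\Delta_{p,\beta}(x)=y_{0}-\beta|x-x_{0}|$, which, because $|x-x_{0}|\ge r=(y_{0}-f(x_{0}))/(\beta-\alpha)$, is at most $f(x_{0})-\alpha|x-x_{0}|$, and this last quantity is $\le f(x)$ by the slope bound on $f$. Geometrically, the tall, steep tent $\Delta_{p,\beta}$ already lies below $f$ at the two endpoints $x_{0}\pm r$ of $I$ and descends from there with slope $\beta>\alpha$, so it stays under $f$ outside $I$.

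Finally, for the slope bound: on $I$ the function $g$ is the pointwise maximum of $f$, which is $\alpha$-Lipschitz there, and $\Delta_{p,\beta}$, which is $\beta$-Lipschitz on all of $\mathbb{R}$; since $\alpha\le\beta$ and a pointwise maximum of $\beta$-Lipschitz functions is $\beta$-Lipschitz, $g$ has slope $\le\beta$ on $I$. I expect the one genuinely delicate point to be the case $x\notin I$ of the displayed inequality — verifying that the outward skirt of $\Delta_{p,\beta}$ never overtakes $f$ once one has left the modification interval $I$; this is exactly the step where the control on the slope of $f$ has to be invoked, and where one must be careful that this control reaches far enough on either side of $x_{0}$.
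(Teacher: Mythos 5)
The paper gives no proof of this lemma (it is declared elementary and omitted), so there is nothing to compare against; your reduction to the single inequality $\Delta_{p,\beta}\le f+\Delta_{(x_{0},h_{0}),\beta-\alpha}$ is the natural route, and your treatment of the lower bound $f\le g$, of the case $x\in I$, and of the Lipschitz bound for $g$ on $I$ is correct. The problem is exactly the step you yourself flag as delicate and then pass over: for $x\notin I$ you conclude $f(x_{0})-\alpha|x-x_{0}|\le f(x)$ ``by the slope bound on $f$,'' but the hypothesis only controls the slope of $f$ \emph{on} $I$, and $x$ lies outside $I$. This is not a removable technicality. Take $x_{0}=0$, $\alpha=1$, $\beta=2$, $f\equiv 10$ on $[-1,1]$ with $f$ dropping at slope $100$ to $0$ just outside $[-1,1]$, and $y_{0}=11$, so $h_{0}=1$, $r=1$, $I=[-1,1]$. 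Then $f$ has slope $0\le\alpha$ on $I$, but $\Delta_{p,\beta}(1.05)=8.9>5=f(1.05)$, so $g\neq f$ off $I$ and the upper bound in the first bullet fails there as well. In other words, the exterior case cannot be proved from the stated hypotheses because it is not true under them.

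What your argument actually establishes at the boundary is $\Delta_{p,\beta}(x_{0}\pm r)\le f(x_{0}\pm r)$; to push this outward you need $f$ to decay no faster than $\Delta_{p,\beta}$ does, i.e.\ you need a slope bound (anything $\le\beta$ suffices) on the part of the support of $\Delta_{p,\beta}$ lying outside $I$, not just on $I$. With that added hypothesis your computation closes: outside $I$, wherever $\Delta_{p,\beta}>0$ it decreases at rate exactly $\beta$ from a value $\le f$ at $\partial I$, while $f$ decreases at rate at most $\beta$, so $\Delta_{p,\beta}\le f$ persists, and where $\Delta_{p,\beta}=0$ the inequality is trivial. This strengthened hypothesis is what is actually available where the lemma is invoked (in the proof of part (1) of the hull proposition, the ambient jigsaw is shown to have small slope on an interval $[x_{0}-5w_{n},x_{0}+5w_{n}]$ strictly containing $I$, and slopes at most $\beta=\alpha_{n}$ elsewhere), so the right move is to state and prove the lemma with the slope condition imposed on the support of $\Delta_{p,\beta}$ rather than only on $I$.
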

We omit the elementary proof.

\subsection{Double jigsaws}

We identify a jigsaw with the closed set that it bounds with the $x$-axis.
More precisely,
\begin{defn}
Given a jigsaw $g$ whose minimal closed supporting interval is $I$,
let 
\[
A(g)=\{(x,y)\mid x\in I\;,\;0\leq y\leq g(x)\}
\]
If $h$ is a function such that $-h$ is a jigsaw, define $A(h)$
similarly:
\[
A(h)=\{(x,-y)\mid(x,y)\in A(-h)\}
\]
\end{defn}
By joining regions of the types $A(g),A(h)$ above one can form more
complicated shapes (e.g. diamonds):
\begin{defn}
A \textbf{double jigsaw $G\subseteq\mathbb{R}^{2}$ with equator $J\subseteq\mathbb{R}^{2}$
}is a set of the form $G=(A(g^{+})\cup A(-g^{-}))+p$ and an interval
$J=I+p$, where
\begin{itemize}
\item $p\in\mathbb{R}^{2}$ .
\item $g^{+},g^{-}$ are jigsaws.
\item $I$ is the minimal closed  interval supporting $g^{+},g^{-}$.
\end{itemize}
If $g^{+},g^{-}$ are $\alpha$-jigsaws then $G$ is called a double
$\alpha$-jigsaw.
\end{defn}
Note that $G$ and $J$ and $p$ determine $I,g^{+},g^{-}$ uniquely,
and vice versa.

\subsection{The hull of collections of diamonds}

Let $\lor$ denote the maximum operator between numbers and functions,
and observe that $f\lor g$ is jigsaw whenever $f,g$ are.
\begin{defn}
[Adjoining a diamond to a double jigsaw] Let $I=[-r,r]$ be a symmetric
interval and $g^{+},g^{-}$ jigsaws supported on $I$, and let $G=A(g^{+})\cup A(-g^{-})$
denote the corresponding double jigsaw. 

Let $D$ be an $\alpha$-diamond (we do not require the equator to
be on the $x$-axis). Let $q^{+},q^{-}$ be the north and south poles
of $D$ respectively. Then the double jigsaw obtained by \textbf{adjoining
}$D$ to $G$, denoted $G\ltimes D$, is the set $A(h^{+})\cup A(-h^{-})$,
where
\begin{itemize}
\item $h^{+}=g^{+}\lor\Delta_{q^{+},\alpha}$ if $q^{+}$ is in the upper
half plane, and $h^{+}=g^{+}$ otherwise.
\item $h^{-}=g^{-}\lor\Delta_{-q^{-},\alpha}$ if $q^{-}$ is in the lower
half plane, and $h^{-}=g^{-}$ otherwise.
\end{itemize}
When the equator $I\subseteq\mathbb{R}^{2}$ of $G$ is not contained
in the $x$-axis, let $p$ be the center of $I$, and set 
\[
G\ltimes D=((G-p)\ltimes(D-p))+p
\]
\end{defn}

\begin{rem}
Note that
\begin{enumerate}
\item $G,D\subseteq G\ltimes D$.
\item The equator of $G\ltimes D$ contains the equator of $G$.
\item The operation $\ltimes$ is asymmetric, because the right-hand term
must be a diamond, while the left can be a jigsaw. Even when both
arguments are diamonds, it is not commutative.
\end{enumerate}
\end{rem}
\begin{defn}
[The hull of a family of diamonds]\label{def:hulls}Let $\mathcal{D}_{1},\ldots,\mathcal{D}_{N}$
be families of diamonds, with all elements of $\mathcal{D}_{n}$ having
dimensions $w_{n}\times h_{n}$ with $w_{1}<w_{2}<\ldots<w_{N}$.

The \textbf{hull} $\mathcal{H}=\mathcal{H}(\mathcal{D}_{1},\ldots,\mathcal{D}_{N})$
of the sequence is the collection $\mathcal{H}$ of double jigsaws
constructed as follows:
\begin{quote}
Initialize $\mathcal{H}=\emptyset$.

For each $n$ from $N$ down to $1$, 
\begin{quote}
For each $D\in\mathcal{D}_{n}$ , 
\begin{enumerate}
\item [(a)] If $d(D,G)>h_{n}$ for all $G\in\mathcal{H}$, add $D$ to
$\mathcal{H}$.
\item [(b)] Otherwise, choose a set $G\in\mathcal{H}$ with $d(D,G)\leq h_{n}$,
delete $G$ from $\mathcal{H}$ and replace it with $G\ltimes D$.
\end{enumerate}
\end{quote}
\end{quote}
\end{defn}
\begin{figure}
\includegraphics[scale=0.6]{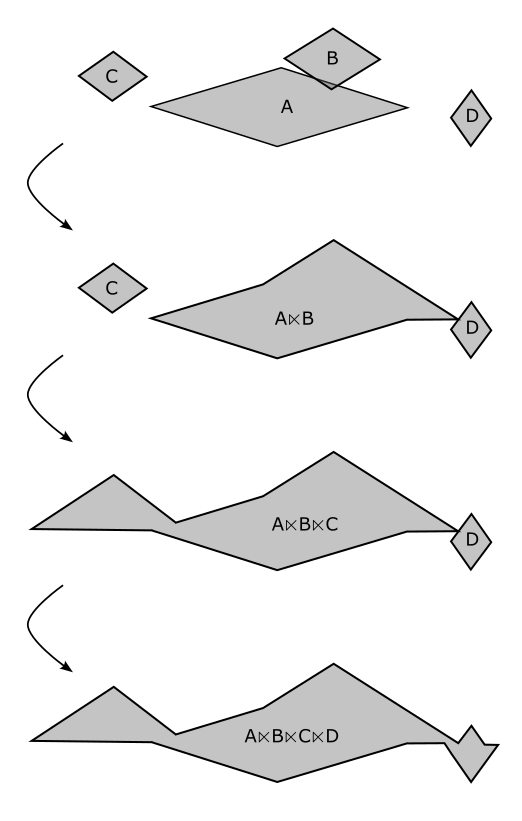}\caption{Merging diamonds to create their hull.}

\end{figure}

There are many possible orders with which to choose the diamonds in
round $n$ of the construction, and there may be more than one choice
for $G$ in step (b). We assume for now that these choices have all
been made according to some rule that is fixed in advance, so as to
make the hull well defined. The next proposition shows that, under
some separation assumptions, these choices do not affect the outcome.
\begin{prop}
\label{prop:properties-of-hulls}Let $\mathcal{D}_{1},\ldots,\mathcal{D}_{N},$$w_{n},h_{n}$
be as in Definition \ref{def:hulls}. Let $\alpha_{n}=h_{n}/w_{n}$,
so $\mathcal{D}_{n}$ consists of $\alpha_{n}$-diamonds, write $h_{\leq n}=\sum_{i\leq n}h_{i}$
and $w_{\leq n}=\sum_{i\leq n}w_{i}$. Suppose that 
\begin{enumerate}
\item [(a)]$\mathcal{D}_{n}$ is $20$-sparse. 
\item [(b)] $h_{n}>10h_{\leq n-1}$ and $w_{n}>10w_{\leq n-1}$ for all
$1<n\leq N$.
\item [(c)] $\alpha_{n}<\frac{1}{10}\alpha_{n-1}$ for $n>1$. 
\end{enumerate}
Then 
\begin{enumerate}
\item If $D\in\mathcal{D}_{n}$ is merged with a double jigsaw $J\in\mathcal{H}$
is step (b) of Definition \ref{def:hulls}, then $(J\ltimes D)\setminus J$
is properly contained in a rectangle of dimensions $5w_{n}\times5h_{n}$
that contains $D$.

In particular, since $\mathcal{D}_{n}$ is $20$-separated, the order
with which diamonds are chosen in round $n$ of the construction does
not affect whether part (a) or (b) are invoked for a given diamond.
\item If $D\in\mathcal{D}_{n}$ was added in step (a) of round $n$ of Definition
\ref{def:hulls}, then in the final hull, the double jigsaw $J$ containing
$D$ is properly contained in a rectangle of dimensions $(w_{n}+10w_{\leq n-1})\times(h_{n}+10h_{n-1})$,
and hence in a rectangle of dimensions $2w_{n}\times2h_{n}$ . 
\item The double jigsaws in $\mathcal{H}$ are pairwise disjoint, and every
vertical line segment of length $2h_{N}$ contains points in the complement
of $\mathcal{H}$. 
\item If $p\in\mathbb{R}^{2}\setminus\cup\mathcal{H}$, then there exists
a polygonal graph $\gamma$ passing through $p$, disjoint from $\mathcal{H}$,
and with all line segments in the path having slope at most $\alpha_{1}$
in absolute value. 
\item Let $p\in\mathbb{R}^{2}\setminus\cup\mathcal{H}$. Suppose that $\mathcal{D}'_{1},\ldots,\mathcal{D}'_{N}$
are collections of diamonds satisfying the same conditions as $\mathcal{D}_{1},\ldots,\mathcal{D}_{N}$
and agreeing near $p$ with the original collection, in the sense
that for each $1\leq n\leq N$, 
\[
\{D\in\mathcal{D}_{n}\mid p\in10D\}=\{D'\in\mathcal{D}'_{n}\mid p\in10D'\}
\]
Then $p\in\mathbb{R}^{2}\setminus\mathcal{H}'$, where $\mathcal{H}'$
is the hull of $\mathcal{D}'_{1},\ldots,\mathcal{D}'_{N}$ .
\end{enumerate}
\end{prop}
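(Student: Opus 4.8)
## Proof plan for Proposition \ref{prop:properties-of-hulls}

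The five assertions are of increasing difficulty, and the natural order is essentially the stated one, because each part relies on the geometric control established by the earlier parts. I would organize the argument around a single quantitative estimate — the ``localization lemma'' — which says that adjoining a diamond $D\in\mathcal{D}_n$ to a double jigsaw changes it only inside a box of size $O(w_n)\times O(h_n)$ around $D$; once this is in hand, everything else is bookkeeping with the sparseness hypothesis (a) and the lacunarity hypotheses (b), (c).

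\smallskip
\textbf{Step 1 (parts 1 and 2): localization of the merge operation.} Fix $D\in\mathcal{D}_n$ and a double jigsaw $J=A(g^+)\cup A(-g^-)$ already present in $\mathcal{H}$ when $D$ is processed. By construction $J$ was built by merging $D$'s predecessor diamonds from $\mathcal{D}_N,\ldots,\mathcal{D}_{n+1},\mathcal{D}_n$; by downward induction on $n$ I may assume $J$ is contained in a box of dimensions $(w_{>n}+\text{lower order})\times(h_{>n}+\text{lower order})$ around the diamond that seeded it, and in particular — using (b) — that on the relevant interval $g^\pm$ has slope at most $\alpha_{n+1}<\tfrac1{10}\alpha_n$. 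Now apply Lemma \ref{lem:bound-on-adjoining-effect} with $\beta=\alpha_n$ and $\alpha=\alpha_{n+1}$: the pole $q^+$ of $D$ sits at height at most $O(h_{>n})$ above $g^+(x_{q^+})$, so the radius $r=(y_0-g^+(x_{q^+}))/(\beta-\alpha)\le (y_0-g^+(x_{q^+}))\cdot\tfrac{2}{\alpha_n}$; using $d(D,J)\le h_n$ and (b), (c) this $r$ is bounded by a small multiple of $w_n$, and the height gained $\Delta_{(x_0,y_0-g^+(x_0)),\beta-\alpha}$ is bounded by a small multiple of $h_n$. Hence $(J\ltimes D)\setminus J$ fits in a $5w_n\times5h_n$ box containing $D$; the same estimate for $q^-$ finishes part 1. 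For part 2, if $D$ is added fresh in step (a) of round $n$, the only later modifications to its double jigsaw come from diamonds of $\mathcal{D}_{n-1},\ldots,\mathcal{D}_1$, each of which enlarges it by at most $5w_i\times5h_i$ in the $i$-th round — but more carefully, by the $(w_i+10w_{\le i-1})\times(h_i+10h_{i-1})$ bound; summing the geometric series controlled by (b) gives the stated $(w_n+10w_{\le n-1})\times(h_n+10h_n)$, which by (b) again is inside $2w_n\times2h_n$. The ``in particular'' clause of part 1 — order-independence within round $n$ — then follows because the $5w_n\times5h_n$ boxes around distinct diamonds of the $20$-sparse family $\mathcal{D}_n$ are pairwise disjoint, so whether a given $D$ triggers (a) or (b) depends only on whether some $G\in\mathcal{H}_{>n}$ comes within $h_n$, which is decided before round $n$ begins.

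\smallskip
\textbf{Step 2 (part 3): disjointness and the vertical-segment property.} Disjointness: two double jigsaws in the final $\mathcal{H}$ arose from seed diamonds $D\in\mathcal{D}_m$, $D'\in\mathcal{D}_n$ with (say) $m\ge n$; by part 2 they live in $2w_m\times2h_m$ and $2w_n\times2h_n$ boxes; if they intersected, then $D'$ (or one of the diamonds that enlarged $D'$'s jigsaw, all of dimension $\le w_m\times h_m$ and within distance $O(h_{\le m})$ of $D'$) would have come within $h_m$ of $D$'s jigsaw at the moment round $m$ was processed, contradicting the fact that they ended up in different components — unless the bookkeeping of step (b) would have merged them. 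The clean way to phrase this is: ``$J\ne J'$ in the final hull $\iff$ at no round did a diamond feeding one come within $h_{(\cdot)}$ of the other,'' and the box bounds of part 2 make ``within $h_{(\cdot)}$'' impossible once the boxes are disjoint. For the vertical segments: any vertical segment of length $2h_N$ that met every double jigsaw it passes near must in particular thread between them; since each double jigsaw in $\mathcal{H}$ has vertical extent at most $2h_N$ (part 2 with $n\le N$) and they are disjoint, a segment of length $2h_N$ placed to straddle a gap, or simply noting that the union $\cup\mathcal{H}$ has the property that its intersection with any vertical line is a union of intervals of total length $<2h_N$ — this needs the $20$-sparseness of $\mathcal{D}_N$ to bound how many level-$N$ jigsaws a vertical line can meet (at most one, since their $2w_N\times2h_N$ boxes would otherwise overlap) — gives a point off $\mathcal{H}$ on every such segment.

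\smallskip
\textbf{Step 3 (part 4): the safe polygonal graph.} Given $p\notin\cup\mathcal{H}$, I build the graph of an $\alpha_1$-Lipschitz function $f$ through $p$ avoiding $\mathcal{H}$. The double jigsaws are, by definition, regions bounded by $\alpha_n$-jigsaws with $\alpha_n$ decreasing in $n$; so near a level-$n$ double jigsaw, a path of slope $\le\alpha_1$ can ``ride over'' its roof $g^+$ (whose slope is $\le\alpha_n\le\alpha_1$) or under its floor. The construction: start at $p$, travel horizontally; each time the horizontal ray would enter some double jigsaw $J$, detour by following the outer boundary of the $2w\times2h$ box of $J$ — going up and over, with the vertical parts of the detour replaced by segments of slope exactly $\alpha_1$ (which is $\ge$ the slope $\alpha_n$ of $J$'s roof, so the detour clears $J$), then resume horizontally on the far side. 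Disjointness (part 3) and the box bounds (part 2) ensure these detours don't collide; the lacunarity (b) ensures that a detour around a level-$n$ jigsaw, which has horizontal reach $O(w_n)$ and rises $O(h_n)$, is a negligible perturbation at all larger scales, so the process converges to a genuine $\alpha_1$-Lipschitz $f:\mathbb{R}\to\mathbb{R}$ with $\operatorname{graph}(f)\cap\cup\mathcal{H}=\emptyset$ and $p\in\operatorname{graph}(f)$.

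\smallskip
\textbf{Step 4 (part 5): locality / the main obstacle.} This is the hard part, and it is the reason the hull was defined by a greedy ``merge if within $h_n$'' rule rather than something more global. Fix $p\notin\cup\mathcal{H}$ and suppose $\mathcal{D}'_n$ agrees with $\mathcal{D}_n$ on $\{D:p\in10D\}$ for every $n$. I want $p\notin\cup\mathcal{H}'$. The key point is that membership of $p$ in a double jigsaw $J$ of $\mathcal{H}$ is \emph{determined by diamonds near $p$}: by part 2, a double jigsaw containing $p$ sits in a $2w_n\times2h_n$ box containing $p$, and (tracing back through the merges, each of which by part 1 moved things only $O(w_i)$) every diamond that ever contributed to that box lies within $O(w_{\le n})\le 2w_n$ of $p$, hence — here is where one needs a slightly generous constant like $10$ in the hypothesis $p\in10D$, chosen against the $5$ and $2$ appearing in parts 1–2 — within $10D$ for the relevant $D$. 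Consequently the portion of the hull construction that could possibly put $p$ inside a double jigsaw only ever inspects diamonds on which $\mathcal{D}$ and $\mathcal{D}'$ agree, and the greedy rule's decisions (a) vs.\ (b) near $p$ are, by the order-independence from part 1, insensitive to the diamonds far from $p$. So running the hull construction for $\mathcal{D}'$ produces, in the $10D$-neighbourhood of $p$, exactly the same double jigsaws as for $\mathcal{D}$, and in particular $p$ is outside all of them; $p\notin\cup\mathcal{H}'$. The delicate bookkeeping — making ``near $p$'' consistent across all $N$ scales simultaneously, and verifying the constant $10$ really dominates the accumulated $5$'s and $2$'s from Steps 1–2 — is where I expect to spend the most care; everything else is a geometric series governed by hypotheses (b) and (c).
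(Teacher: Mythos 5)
Your plan follows essentially the same route as the paper: part (1) via the downward induction on levels showing the existing jigsaws have slope at most $\alpha_{n+1}$ on the relevant interval, then Lemma \ref{lem:bound-on-adjoining-effect} with $\beta=\alpha_n$, $\alpha=\alpha_{n+1}$ to confine each merge to a $5w_n\times5h_n$ box; part (2) by summing the geometric series controlled by hypothesis (b); parts (3)--(5) as corollaries of this localization. Two small points where your reasoning drifts from what is actually needed. First, in part (3) your claim that a vertical line meets at most one level-$N$ jigsaw is false ($20$-sparseness does not prevent two level-$N$ diamonds from being vertically far apart on the same vertical line), and the ``total length $<2h_N$'' bound is neither true nor necessary: the right argument is that a segment of length $2h_N$ contained in $\cup\mathcal{H}$ would, by connectedness and pairwise disjointness of the closed double jigsaws, lie in a single one, which is impossible since each has vertical extent $<2h_N$ by part (2). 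Second, in part (4) detouring around the $2w_n\times2h_n$ \emph{bounding boxes} invites collisions, since bounding boxes of distinct jigsaws need not be disjoint; the paper instead follows the boundary of the jigsaw itself (which, being part of a set disjoint from all other jigsaws, cannot enter them) and then perturbs off the boundary. Both are repairable slips rather than structural gaps.
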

\begin{proof}
(1) First, By induction, when the construction enters stage $n$,
every $J\in\mathcal{H}$ has slopes at most $\alpha_{n+1}$ (or $0$
when $n=N$). 

For each $n$, we induct on the sequence $(D_{i})$ of elements of
$\mathcal{D}_{n}$ processed in stage $n$ of the construction. Consider
what happens when we reach $D_{i}$, and its distance from some double
jigsaw $J\in\mathcal{H}$ is less than $h_{n}$. Let $(x_{0},y_{0})$
denote the center of $D_{i}$. Observe that $J$ arose by adjoining
zero or more of the diamonds $D_{1},\ldots,D_{i-1}$ to a double jigsaw
$J'$ that was in $\mathcal{H}$ at the beginning of stage $n$, and,
by our induction hypothesis, each time one of these $D_{j}$ was adjoined,
the change to $J'$ was limited to a box of dimensions $5w_{n}\times5h_{n}$;
since $\mathcal{D}_{n}$ is $20$-sparse, these changes are disjoint
from the box of the same dimensions with center $(x_{0},y_{0})$.
Thus, the jigsaws defining $J$ agree with those defining $J'$ on
the interval $[x_{0}-5w_{n},x_{0}+5w_{n}]$, and in particular its
slopes on this interval are at most $\alpha_{n+1}$. This implies,
in particular, that the closest point in $D_{i}$ to $J$ is its top
or bottom vertex, or that these points are on different sides of the
(line extending the) equator of $J$. Therefore, the distance of the
other vertex from the jigsaw, or the equatorial line, is at most $2h_{n}$.
Now we invoke Lemma \ref{lem:bound-on-adjoining-effect} to see that
adjoining $D_{i}$ affects $J$ only in the interval $[x_{0}-r,x_{0}+r]$
for $r=2h_{n}/(\alpha_{n}-\alpha_{n+1})$, and since all changes happen
in a region whose boundary has slopes $\alpha_{n}$, we have confined
the change to a box of dimension $2r\times(\alpha_{n}\cdot2r)$. Using
the definition of $r$, and the assumptions $h_{n}=\alpha_{n}w_{n}$
and $\alpha_{n+1}<\alpha_{n}/10$, we see that the changes are confined
to a box of dimensions $5w_{n}\times5h_{n}$ containing $D_{i}$.

The second part of (1) follows from the first part and the sparsity
assumption, because together they imply that the region added to the
jigsaw when $D_{i}\in\mathcal{D}_{n}$ is added does not come within
$h_{n}$ of any other diamond in $\mathcal{D}_{n}$, and so does not
affect the choice of option (a) or (b) in future stages of the construction.

(2) follows because (1) shows that the additions in stage $n$ do
not interact, and so if $J$ is a double jigsaw in $\mathcal{H}$
before entering stage $n$, then it is extended by at most $5h_{n}$
upwards and downwards, and $5w_{n}$ left and right, in the course
of stage $n$. Each double jigsaw in the hull started from a diamond
$D\in\mathcal{D}_{n}$ for some $n$, whose dimensions are $w_{n}\times h_{n}$.
Thus the cumulative vertical height of a double jigsaw is $<h_{n}+10h_{\leq n-1}<2h_{n}$
and the cumulative width is $<w_{n}+10w_{\leq n-1}<2w_{n}$. 

(3) The first part follows from (2), using $20$-sparsity. The second
part follows from (2) and the first part of (3), since they imply
that any vertical line intersects elements of $\mathcal{H}$ in disjoint
intervals of length $\leq2h_{N}$.

(4) If $p$ is in the complement of $\cup\mathcal{H}$, we can form
a polygonal path through $p$ by traveling horizontally in either
direction; if we hit a double jigsaw in $\mathcal{H}$, we follow
its boundary until it is possible to continue horizontally in the
original direction, at which point we do so. This path is not quite
disjoint from $\cup\mathcal{H}$, since part of it may lie in the
boundary of $\cup\mathcal{H}$, but we can perturb the path by slightly
raising or lowering each line segment that follows a boundary of an
element of $\mathcal{H}$ in order to avoid the boundary entirely. 

(5) This again follows from (1), because in the course of the construction
of the hull, any diamond $D\in\mathcal{D}'_{n}\setminus\mathcal{D}_{n}$
is far enough away that adjoining cannot capture $p$ nor cause $p$
to be closer than $h_{n}$ to an element of $\mathcal{H}$ if it was
not this close already.
\end{proof}

\subsection{\label{subsec:Rectangles-and-safe-points}Application to rectangles}
\begin{proof}
[Proof of proposition \ref{prop:safe-paths}] If $R$ is a rectangle
of dimensions $w\times h$, let $\diamondsuit(R)$ denote the closed
diamond of dimensions $2w\times2h$ with the same center as $R$,
whose diagonals are parallel to the sides of $R$, and that contains
$R$.

Let $\mathcal{R}_{1},\ldots,\mathcal{R}_{N}$ be as in the statement
and define 
\[
\mathcal{D}_{n}=\{\diamondsuit(2R)\mid R\in\mathcal{R}_{n}\}
\]
Then $D\in\mathcal{D}_{n}$ are $20$-sparse sets of diamonds of dimensions
$4w_{n}\times4h_{n}$. Define $\safe(\mathcal{R}_{1},\ldots,\mathcal{R}_{N})=\mathcal{H}(\mathcal{D}_{1},\ldots,\mathcal{D}_{N})$.
Statements (2)--(4) are now immediate consequences of the corresponding
statements in Proposition \ref{prop:properties-of-hulls}. 

For (1), note that since $\safe(\mathcal{R}_{1},\ldots,\mathcal{R}_{N})$
is disjoint from all $D\in\mathcal{D}_{n}$ it is disjoint from $2R$
for all $R\in\mathcal{R}_{n}$, leading to the right inclusion in
(1). For the left inclusion, note that by part (1) of Proposition
\ref{prop:properties-of-hulls}, for each $D\in\mathcal{D}_{n}$ there
is a $5w_{n}\times5h_{n}$ rectangle $\widetilde{R}(D)$ containing
$D$ such that
\[
\mathcal{H}(\mathcal{D}_{1},\ldots,\mathcal{D}_{N})\subseteq\bigcup_{n=1}^{N}\bigcup_{D\in\mathcal{D}_{N}}\widetilde{R}(D)
\]
Since $\widetilde{R}(D)\subseteq10D$, and since there is a rectangle
$R\in\mathcal{R}_{n}$ such that $D=2R$, and hence $\widetilde{R}(D)\subseteq20R$,
we conclude that
\[
\mathcal{H}(\mathcal{D}_{1},\ldots,\mathcal{D}_{N})\subseteq\bigcup_{n=1}^{N}\bigcup_{R\in\mathcal{R}_{n}}20R
\]
This implies the left inclusion in (1).
\end{proof}

\section{\label{sec:Main-construction}Main construction}

\subsection{Notation and definitions}
\begin{itemize}
\item Elements of $\mathbb{R}^{2}$ are called \textbf{points}.

Elements of $\mathbb{Z}^{2}$ are called \textbf{sites}. 
\item A \textbf{disk }$D\subseteq\mathbb{R}^{2}$ is a closed Euclidean
ball. If the radius of a disk is $r$, we call it an $r$-disk.
\item A set $E\subseteq\mathbb{R}^{2}$ is \textbf{$r$-separated }if $\left\Vert x-y\right\Vert \geq r$
for all distinct $x,y\in E$.
\item A set $E\subseteq\mathbb{R}^{2}$ is \textbf{$R$-dense }if it intersects
every $R$-disk.
\end{itemize}
The following definition is not standard:
\begin{itemize}
\item Let $u\in\mathbb{R}^{2}$ be a unit vector and $D$ an $r$-disk.
An \textbf{almost radial rectangle} $R$ with \textbf{orientation}
$u$ (relative to the disk $D$) is a rectangle satisfying
\begin{enumerate}
\item $R\subseteq D$.
\item $R\cap\partial D\neq\emptyset$.
\item The long sides of $R$ are have length $\leq r$, are parallel to
$u$, and when they are extended in direction $u$, each of the resulting
rays passes within $\frac{1}{100}r$ of the center of $D$.
\end{enumerate}
The orientation of $R$ is 
\[
\theta(R)=u
\]
Note that $\theta(u)$ points approximately from $R\cap\partial D$
to the center of $D$. 
\end{itemize}

\subsection{Parameters}

For each $n\in\mathbb{N}$, define the parameter 
\[
N_{n}=2^{1,000,000\cdot n^{2}}
\]
By Proposition \ref{prop:coins-and-buckets-1}, there exist unorientable
coin-and-bucket configurations with $1000^{2}n^{2}$ buckets. We introduce
two sequences of real parameters 
\begin{align*}
h_{1} & \leq h_{2}\leq\ldots\\
w_{1} & \leq w_{2}\leq\ldots
\end{align*}
representing the height and width of rectangles that will appear in
the construction, and
\[
r_{n}=\frac{10}{9}w_{n}
\]
representing radii of disks. We assume further that 
\[
\frac{h_{1}}{w_{1}}\ll1
\]
and that
\begin{align*}
h_{n+1} & \gg w_{n},h_{n},n\\
w_{n+1} & \gg h_{n+1},w_{n},n
\end{align*}

\subsection{Certificates and their parts}

Recall from the introduction that our goal is to define a structure
called a certificate, consisting of 
\begin{itemize}
\item A sequence $\mathcal{W}_{1},\mathcal{W}_{2},\ldots$, where
\item Each $\mathcal{W}_{n}$ is a (``syndetic'' and ``separated'')
collection of sets $W\subseteq\mathbb{R}^{2}$, and
\item Each $W$ is a finite set whose elements are called \textbf{witnesses}.

The coordinates of $w\in W$ are real, but from $w$ we obtain a site
in $\mathbb{Z}^{2}$ by taking the coordinate-wise integer part. 
\end{itemize}
To help coordinate this data we shall introduce additional geometric
structures:
\begin{itemize}
\item Each $W\in\mathcal{W}_{n}$ will be contained in a Euclidean disk
called a \textbf{frame}.
\item Each witness $w\in W$ will be contained in an associates \textbf{box},
which is an almost radial rectangle relative to the frame that $w$
belongs to (it is long and thin and approximates a radius of the frame). 
\item Each box will be further subdivided into rectangular \textbf{sections}. 
\end{itemize}
Here, finally, is the full definition. A \textbf{certificate }consists
of
\begin{itemize}
\item Countably many \textbf{levels}, numbered $n=1,2,3,\ldots$ (corresponding
to the sets $\mathcal{W}_{n})$.
\item The $n$-th level consists of countably many $r_{n}$-discs, called
$n$\textbf{-frames}.
\begin{itemize}
\item The \textbf{center }of an $n$-frame $F$ is denoted $c(F)$ .
\item The set of centers of $n$-frames in a certificate is $\frac{1}{2}r_{n}$-separated.
\end{itemize}
If the set of centers of $n$-frames is $10r_{n}$-dense, we say that
the $n$-th level is \textbf{dense}. The certificate is dense if all
its levels are dense. We allow non-dense certificates, which arise
naturally in the intermediate stages of constructing a certificate.
\item Each $n$-frame $F$ contains $N_{n}$ rectangles $B_{1},\ldots,B_{N_{n}}$called
$n$-\textbf{boxes}, satisfying
\begin{itemize}
\item The dimensions of $n$-boxes is $w_{n}\times h_{n}$. 
\item The $B_{i}$ are almost-radial relative to $F$.
\item Every two distinct boxes $B_{i}\neq B_{j}$ satisfy $d(B_{i},B_{j})\geq100h_{n}$.
\item All the $B_{i}$ in a frame have a common orientation, and furthermore
the orientations lie in the set 
\[
\Theta=\{\frac{2\pi k}{1000}\mid0\leq k<1000\}
\]
 
\item Each $n$-box $B$ associated to a frame $F$ is divided into $1000$
closed rectangular \textbf{sections }$B^{(1)},B^{(2)},\ldots,B^{(1000)}$
of dimensions $(w_{n}/1000)\times h_{n}$, numbered by increasing
distance from the center of $F$.

We require the sections to be closed because later a limiting procedure
will require it. Evidently as described above the sections cannot
be disjoint, but one should think of them as such. Formally one can
insert minuscule spaces between them, while slightly reducing their
length in the radial direction. We do not bother to keep track of
this detail which presents no serious problems.

.
\item The set of indices of sections is denoted
\[
\Sigma=\{1,\ldots,1000\}
\]
and the section of a point $u\in B$ is denoted 
\[
\sigma(u)=\sigma_{B}(u)\in\Sigma
\]
so for $u\in B$ we always have $u\in B^{(\sigma(u))}$. Note that
$u\in\mathbb{R}^{2}$ may belong to several boxes (from different
frames and levels) so $\sigma(u)$ is not well defined, hence the
notation $\sigma_{B}(u)$. However, when the box is clear from the
context, we omit the subscript and write $\sigma(u)$.
\end{itemize}
\item Each box $B$ is associated with one\textbf{ witness $w=w(B)\in B$}
(Note: this is a point in $\mathbb{R}^{2}$, not $\mathbb{Z}^{2}$).

\begin{itemize}
\item The witness $w(B)$ must be safe (in the sense of Proposition \ref{prop:safe-paths})
with respect to the collection of rectangles 
\[
\mathcal{R}_{C}(B)=\{B',B''\}\cup\left\{ R^{(i)}\left|\begin{array}{c}
\text{\ensuremath{R}\text{ is a \ensuremath{k}-box for }}k<n\text{ such that }\theta(R)=\theta(B)\\
\text{and }2R^{(i)}\cap B^{(i)}\neq\emptyset\text{ for some }i\in\{1,\ldots,1000\}
\end{array}\right.\right\} 
\]
Here, $B',B''$ denote the rectangles of dimensions $w_{n}\times\frac{1}{100}h_{n}$
that share one long side with $B$ and whose interiors are disjoint
from $B$. We note that this collection is $80$-sparse, so it satisfies
the hypothesis of Proposition \ref{prop:safe-paths}.
\item The orientation of the box $B$ associated to $w$ is denoted $\theta(w)=\theta(B)$.
\item The section $\sigma(w)=\sigma_{B}(w)$ is also defined, as above.
\end{itemize}
\end{itemize}
\begin{figure}
\includegraphics[scale=0.6]{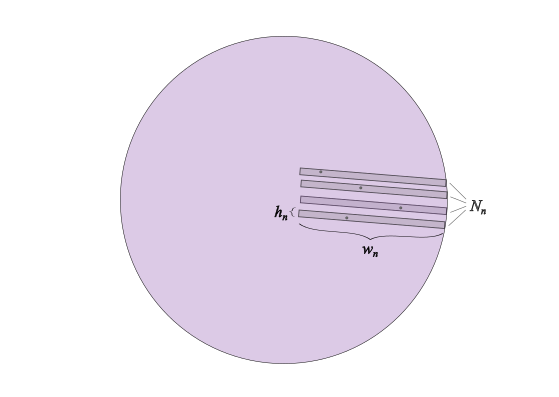}\caption{A frame and its boxes. Each box has a witness in it indicated by a
dot. We have not depicted the sections.}
\end{figure}

\begin{figure}
\includegraphics[scale=0.8]{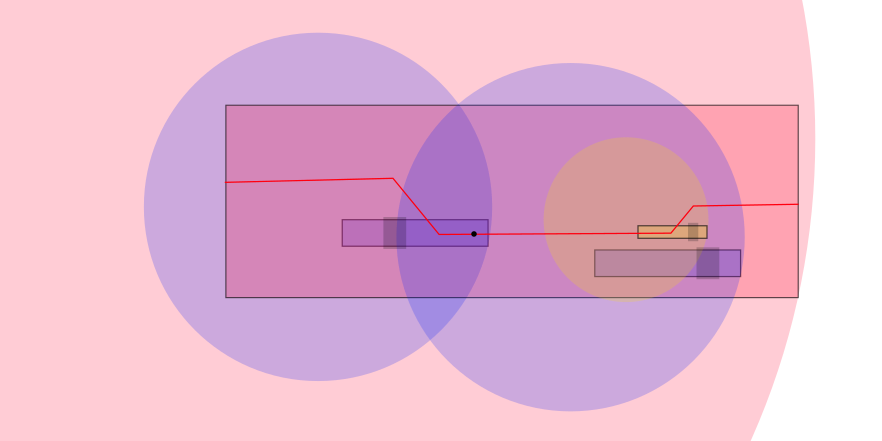}

\caption{Part of a frame $F$ and a box $B$ in $F$ (indicated in red), and
two lower level frames with boxes in the same orientation (indicated
in blue and yellow). The gray rectangles indicate sections of the
lower-level boxes which would be included in $\mathcal{R}_{C}(B)$.
The witness $w$ of $B$ is indicated by a black dot, and the red
path passing through $w$ consists entirely of safe points relative
to $\mathcal{R}_{C}(B)$.}
\end{figure}

\begin{itemize}
\item The set of all witnesses associated to a frame $F$ is denoted 
\[
W(F)=\{w\mid w\text{ is a witness in }F\}
\]
and the sets $\mathcal{W}_{1},\mathcal{W}_{2},\ldots$ associated
to the certificate $C$ are 
\[
\mathcal{W}_{n}(C)=\{W(F)\mid F\text{ is an \ensuremath{n}-frame in }C\}
\]

\end{itemize}

\subsection{Comments and first corollaries of the definition}

\subsubsection*{Notation}

We will be flexible in our notation: A witness may well belong, as
a point in $\mathbb{R}^{2}$, to many boxes, including more than one
on each level, but we assume that witnesses ``remember'' which box
they came from. Similarly, by convention, each box ``remembers''
which frame it is associated to. We shall write $w\in B$ and $B\in F$
for these associations, and $w\in F$ to indicate that $w$ is a witness
associated to some box in $F$.

\subsubsection*{Separation of witnesses}

Ideally, we would like different witnesses to occupy different sites,
but it is convenient to aim for something slightly weaker: that each
site can host at most a bounded number of witnesses.

We implement this as follows. Each witness belongs to some section
$S$, which has an associated position in its box ($\sigma(S)\in\Sigma$),
and the box has an orientation ($\theta(S)\in\Theta$). This partitions
witnesses into $|\Theta|\cdot|\Sigma|=1000^{2}$ \textbf{classes}
parameterized by $\Theta\times\Sigma$. One should think of these
classes as occupying different ``layers'' of the plane, avoiding
each other. We must still ensure that witnesses from each class do
not come close to each other. This is done in the following lemmas:
\begin{lem}
\label{lem:separation-of-boxes-from-different-frames-in-same-level}If
$F,F'$ are $n$-frames whose centers are at least $r_{n}/2$ separated,
and if $B\in F$, $B'\in F'$ are boxes with $\theta(B)=\theta(B')$,
then $d(B^{(i)},(B')^{(i)})>\frac{1}{4}r_{n}$ for all $i\in\{1,\ldots,1000\}$
.
\end{lem}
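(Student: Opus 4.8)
The plan is to exploit the three defining properties of an almost-radial rectangle together with the quantitative relation $r_n = \tfrac{10}{9} w_n$ and the smallness of the aspect ratio $h_1/w_1 \ll 1$ (which forces $h_n \ll w_n$ for every $n$, since the growth conditions preserve this). The key geometric picture: each box $B_i$ in a frame $F$ is a long thin rectangle whose long sides point in the common direction $u = \theta(B)$ and whose long-side extensions pass within $\tfrac{1}{100} r_n$ of $c(F)$; moreover $B_i \subseteq F$ and $B_i$ touches $\partial F$. So, up to an error of order $h_n$ (the width) plus $\tfrac{1}{100} r_n$ (the off-center slack), $B_i$ occupies the portion of the radius of $F$ in direction $u$ that lies within distance $r_n$ of the center but is ``near the boundary'', i.e. the annular part. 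The section $B^{(i)}$ is the $i$-th of the $1000$ equal sub-rectangles ordered by distance from $c(F)$, so $B^{(i)}$ sits at distance roughly between $c(F) + (\text{something}) + \tfrac{i-1}{1000} w_n$ and that plus $\tfrac{1}{1000} w_n$ along the ray — the point being that $B^{(i)}$ and $(B')^{(i)}$ occupy the \emph{same} radial interval (same index $i$) but along rays emanating from two different centers $c(F), c(F')$ in the same direction $u$.

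First I would set up coordinates aligned with $u$: write each point of $B^{(i)}$ as $c(F) + t u + s u^\perp$ where $t$ ranges over an interval $I_i$ of length $\tfrac{1}{1000} w_n$ (the same interval $I_i$ for both frames, since the index $i$ and the dimensions are the same, and the off-center slack $\tfrac{1}{100}r_n$ only shifts the starting point of the whole box by a bounded amount that I can pin down), and $|s| \le \tfrac{1}{100} r_n + \tfrac{1}{2} h_n$ — a bound on how far a point of the box is from the line through $c(F)$ in direction $u$, coming from property (3) (the extended long side passes within $\tfrac1{100}r_n$ of $c(F)$) plus the half-width $\tfrac12 h_n$. Then a point $p \in B^{(i)}$ and a point $p' \in (B')^{(i)}$ satisfy $p - p' = (c(F) - c(F')) + (t - t') u + (s - s') u^\perp$ with $t, t' \in I_i$, so $|t - t'| \le \tfrac{1}{1000} w_n$ and $|s - s'| \le \tfrac{1}{50} r_n + h_n$.

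Next I would decompose $c(F) - c(F')$ into its components along $u$ and $u^\perp$; call them $a u + b u^\perp$ with $a^2 + b^2 = |c(F) - c(F')|^2 \ge (r_n/2)^2$. The $u^\perp$-component of $p - p'$ is $b + (s - s')$ with $|s - s'| \le \tfrac{1}{50} r_n + h_n \le \tfrac{1}{40} r_n$ for $h_n$ small relative to $r_n$ (here $h_n \ll w_n = \tfrac{9}{10} r_n$). The $u$-component is $a + (t - t')$ with $|t - t'| \le \tfrac{1}{1000} w_n = \tfrac{9}{10000} r_n \le \tfrac{1}{1000} r_n$. If $|b| \ge \tfrac{1}{2} r_n$ is... no — I should instead argue: either $|b| \ge \tfrac{1}{10} r_n$, in which case $|u^\perp\text{-component of } p-p'| \ge \tfrac{1}{10}r_n - \tfrac{1}{40}r_n = \tfrac{3}{40} r_n > \tfrac14 r_n$ fails — so I need to be more careful. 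The right split is: since $a^2 + b^2 \ge r_n^2/4$, either $|a| \ge r_n/3$ or $|b| \ge r_n/3$. In the first case, $|u\text{-component}| \ge r_n/3 - \tfrac{1}{1000} r_n > \tfrac14 r_n$; in the second, $|u^\perp\text{-component}| \ge r_n/3 - \tfrac{1}{40} r_n > \tfrac14 r_n$ (the latter needs $h_n$ small enough relative to $r_n$, which holds by the standing assumptions $h_1/w_1 \ll 1$ and $w_n \gg h_n$). Either way $\|p - p'\| > \tfrac14 r_n$, which is exactly the claim.

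The main obstacle — really the only delicate point — is pinning down the exact radial interval $I_i$ occupied by the section $B^{(i)}$, i.e. verifying that the two sections $B^{(i)}$ and $(B')^{(i)}$ genuinely project to the \emph{same} interval along their respective rays (so that $|t - t'|$ is controlled by a single section-width $\tfrac{1}{1000}w_n$ rather than by the full box length $w_n$). This follows from properties (1)--(3) of almost-radial rectangles: property (2), $R \cap \partial D \neq \emptyset$, together with property (1), $R \subseteq D$, forces the far end of the box to sit essentially on $\partial F$, i.e. at radial distance $r_n$ from $c(F)$ up to the off-center slack $\tfrac{1}{100}r_n$; combined with the fixed length $w_n$ and the fixed subdivision into $1000$ equal sections ordered by distance from the center, this determines $I_i$ up to an additive error of order $\tfrac{1}{100} r_n$, the same for both frames. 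I would carry out this bookkeeping carefully once and absorb all such $O(\tfrac{1}{100} r_n)$ and $O(h_n)$ errors into the slack between $r_n/3$ and $\tfrac14 r_n$; the constants are generous enough that no sharp estimate is needed.
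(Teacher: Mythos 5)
Your proposal is correct and rests on the same key idea as the paper's proof: since the two boxes share an orientation and each is anchored (via $R\subseteq D$, $R\cap\partial D\neq\emptyset$, and the almost-radial condition) at essentially the same radial position relative to its own frame's center, same-index sections are parallel translates of each other up to errors of order $h_n+\tfrac{1}{100}r_n$, so their separation is controlled by $d(c(F),c(F'))>r_n/2$. The paper packages the bookkeeping slightly more compactly — it takes the points of $B,B'$ closest to their centers, translates both by the identical vector along $\theta(B)$ into the $i$-th sections so the distance is preserved exactly, and subtracts two section diameters — whereas you decompose into $u$ and $u^{\perp}$ components with a case split; both are valid and the constants work out either way.
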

\begin{proof}
Let $p,p'$ denote the points in $B,B'$ closest to the centers of
$F,F'$ respectively. Since the rectangles are almost radial, $d(p,c(F))<r_{n}/100$
and $d(p',c(F'))<r_{n}/100$. Since $d(c(F),c(F'))>r_{n}/2$, we have
$d(p,p')>r_{n}/3$. Each section is of dimensions $(w_{n}/1000)\times h_{n}$
and $h_{n}\ll w_{n}=\frac{9}{10}r_{n}$, so (assuming as we may that
$h_{n}<w_{n}/1000$), the diameter $d$ of a section is at most $d\leq\sqrt{2}w_{n}/1000<r_{n}/100$.
Finally, $\theta(B)=\theta(B')$, so for each $i\in\Sigma$ there
are points $p_{i}\in B^{(i)}$ and $p'_{i}\in(B')^{(i)}$ such that
$d(p_{i},p'_{i})=d(p,p')>r_{n}/3$. It follows that 
\[
d(B^{(i)},(B')^{(i)})>d(p,p')-2d>r_{n}/4
\]
as claimed.
\end{proof}
\begin{lem}
\label{lem:separation-of-witnesses-with-identical-orientations-and-section}Let
$C$ be a certificate and $w,w'\in C$ distinct witnesses belonging
to levels $n\neq n'$, respectively. If $\sigma(w)=\sigma(w')$ and
$\theta(w)=\theta(w')$, then $d(w,w')\geq h_{\min\{n,n'\}}$.
\end{lem}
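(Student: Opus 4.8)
The plan is to treat the two cases $n \neq n'$ by reducing to the single-level situation already handled in Lemma~\ref{lem:separation-of-boxes-from-different-frames-in-same-level}, together with the size estimates coming from the ``$\gg$'' assumptions on the parameters $w_n, h_n, r_n$. Without loss of generality assume $n < n'$, so that $h_{\min\{n,n'\}} = h_n$ and, crucially, $h_{n'}, w_{n'}, r_{n'}$ are all enormous compared to $h_n$. First I would observe that $w$ lies in an $n$-box $B$ inside some $n$-frame $F$, and $w'$ lies in an $n'$-box $B'$ inside some $n'$-frame $F'$; in particular $w \in B^{(\sigma(w))}$ and $w' \in (B')^{(\sigma(w'))}$, and these section indices coincide, as do the orientations $\theta(B) = \theta(B')$.

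The key dichotomy I would set up is whether or not $B$ and $B'$ are ``far apart'' simply by virtue of the scale difference. Since $B'$ has dimensions $w_{n'} \times h_{n'}$ with $w_{n'} \gg h_{n'} \gg h_n, w_n$, and since $w' \in (B')^{(\sigma(w'))}$, the point $w'$ sits inside a section of width $w_{n'}/1000$ and height $h_{n'}$, which is still much larger than $w_n$. The natural approach: the $n'$-frame $F'$ has radius $r_{n'} = \tfrac{10}{9}w_{n'}$, which dwarfs $r_n$; so either the $n$-frame $F$ is at distance $\gg h_n$ from $F'$ (in which case $d(w,w') \geq d(F,F') - 2r_n \gg h_n$ and we are done trivially), or $F$ is comparatively close to $F'$, i.e., essentially ``inside'' the region spanned by $F'$. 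In the latter case I would want to argue as in Lemma~\ref{lem:separation-of-boxes-from-different-frames-in-same-level}: using that both boxes are almost-radial with the \emph{same} orientation $\theta(B) = \theta(B')$ and the same section index, the closest points of $B^{(i)}$ and $(B')^{(i)}$ are separated because the two boxes, when one follows their long axes in direction $\theta(B)$, point toward the centers $c(F), c(F')$ of frames whose radii differ by a huge factor — so the two sections $B^{(i)}$ and $(B')^{(i)}$ occupy essentially incomparable ``radial annuli'' and cannot be close. Concretely, $B^{(i)}$ lies within distance roughly $r_n$ of $c(F)$ while $(B')^{(i)}$ lies at radial distance on the order of (some specific fraction times) $r_{n'}$ from $c(F')$; if $c(F)$ and $c(F')$ are within, say, $r_{n'}$ of each other, then these radial positions are incompatible unless the section of $(B')$ is one of the innermost ones, and even then the width $w_n \ll h_{n'}$ forces a gap of at least $h_n$.

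The main obstacle I anticipate is the bookkeeping in the ``close frames'' subcase: one must carefully use the almost-radial condition (item (3): the extended long sides pass within $\tfrac{1}{100}r_{n'}$ of $c(F')$) to pin down where in $F'$ the box $B'$ — and hence the section $(B')^{(\sigma(w'))}$ — actually lies, and then check that no matter where the much smaller frame $F$ and its box $B$ sit inside that region, the equal-orientation, equal-section constraint produces a separation of at least $h_n$ rather than something smaller like $h_n/1000$ or $w_n/1000$. I expect the clean way to handle this is: if $d(c(F), c(F')) > 10 r_n$ then brute distance estimates suffice, and if $d(c(F), c(F')) \le 10 r_n$ then, because $r_n \ll r_{n'}$, the point $w'$ — being in box $B' \subseteq F'$ with $B' \cap \partial F' \ne \emptyset$ — is at distance $\ge r_{n'} - (\text{diam of a section}) - w_n \gg h_n$ from $c(F')$, hence from $w$; unless $w'$ is in an innermost section near $c(F')$, but then the \emph{equal section index} forces $w$ also to be in an innermost section of $B$, and the equal orientations plus the almost-radial condition line $w$ and $w'$ up along nearly the same ray through nearly the same center, where the only remaining room for them to be close is blocked by the $100 h_n$ inter-box separation within $F$ combined with $w_n \gg h_n$. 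I would write this out as a short case analysis, invoking Proposition~\ref{prop:safe-paths} nowhere and Lemma~\ref{lem:separation-of-boxes-from-different-frames-in-same-level} only as a template, and closing with the explicit inequality $d(w,w') \ge h_n = h_{\min\{n,n'\}}$.
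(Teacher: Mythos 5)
Your approach cannot work, because you have explicitly renounced the only mechanism in the construction that separates witnesses across \emph{different} levels. Re-read the definition of a certificate: the $\frac{1}{2}r_n$-separation of frame centers is imposed only \emph{within} a level, and there is no constraint whatsoever on where an $n$-frame sits relative to an $n'$-frame. In particular a small frame $F$ (radius $r_n$) may lie entirely inside the huge frame $F'$ (radius $r_{n'}$), with its box $B$ positioned anywhere along the big box $B'$. The section index is a \emph{local} coordinate, measured by distance from each box's \emph{own} frame center, so ``equal section index'' carries no cross-scale positional information: one can place $F$ so that $B^{(i)}$ overlaps any prescribed spot of $(B')^{(i)}$ while keeping both boxes almost radial with the same orientation $\theta$. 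Your picture of ``incomparable radial annuli'' implicitly assumes the two boxes point at a common center, which is false; and your first dichotomy also misfires, since even when $d(c(F),c(F'))$ is large the set $F'$ still reaches all the way to $F$, so $w'$ can sit next to $w$. Purely geometric bookkeeping therefore yields no lower bound at all.

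The paper's proof is short precisely because it invokes the safety requirement on $w'$, which is the one clause of the definition that couples levels. Let $i=\sigma(w)=\sigma(w')$ and $S=B^{(i)}$. Either $S\in\mathcal{R}_{C}(B')$, in which case safety of $w'$ and part (1) of Proposition \ref{prop:safe-paths} force $w'\notin 2S$; or $S\notin\mathcal{R}_{C}(B')$, which by the definition of $\mathcal{R}_C(B')$ means $2S\cap (B')^{(i)}=\emptyset$, and again $w'\notin 2S$. Since $w\in S$ and $S$ has dimensions $\frac{w_n}{1000}\times h_n$ with $w_n\gg h_n$, this gives the claimed separation. You should restructure your argument around this dichotomy rather than around the geometry of almost-radial rectangles.
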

\begin{proof}
Without loss of generality assume $n'>n$. Write $i=\sigma(w)=\sigma(w')$.
Let $B,B'$ denote the boxes and $S=B^{(i)},S'=(B')^{(i)}$ the sections
containing $w,w'$ respectively. Since $w'$ is safe relative to $\mathcal{R}_{C}(B')$,
there are two possibilities:
\begin{itemize}
\item $S\notin\mathcal{R}_{C}(B')$, meaning that $2S\cap S'=\emptyset$,
and in particular $w'\notin2S$.
\item $S\in\mathcal{R}_{C}(B')$ Then, since $w'$ is safe with respect
to $\mathcal{R}_{C}(B')$, by part (1) of Proposition \ref{prop:safe-paths},
we know that $w'\notin2S$.
\end{itemize}
In both cases we have $w'\notin2S$, hence (since $S$ is a box of
dimensions $\frac{w_{n}}{1000}\times h_{n}$ and using $w_{n}\gg h_{n})$
we have $d(w',w)\geq h_{n}$.
\end{proof}
\begin{cor}
\label{cor:witnesses-differ-in-position-direction-or-section}For
every pair $w\neq w'$ of distinct witnesses in a certificate, at
least one of the following hold:
\begin{itemize}
\item $\theta(w)\neq\theta(w')$ .
\item $\sigma(w)\neq\sigma(w')$ .
\item $d(w,w')\geq h_{1}$.
\end{itemize}
\end{cor}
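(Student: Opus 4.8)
The plan is to deduce Corollary~\ref{cor:witnesses-differ-in-position-direction-or-section} directly from the two preceding lemmas by a short case analysis on whether the two witnesses lie on the same level.

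First I would suppose, for contradiction, that $w\neq w'$ are witnesses with $\theta(w)=\theta(w')$, $\sigma(w)=\sigma(w')$, and $d(w,w')<h_1$. Let $n,n'$ be the levels to which $w,w'$ belong. If $n\neq n'$, then Lemma~\ref{lem:separation-of-witnesses-with-identical-orientations-and-section} applies verbatim and gives $d(w,w')\geq h_{\min\{n,n'\}}\geq h_1$ (using the monotonicity $h_1\leq h_2\leq\ldots$ of the height parameters), contradicting the assumption.

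It remains to treat the case $n=n'$. Here the witnesses lie on the same level. If they belong to the same frame $F$, then they lie in distinct boxes $B\neq B'$ of $F$ (distinct because a box hosts a single witness), and the definition of a certificate requires $d(B,B')\geq 100 h_n$, so in particular $d(w,w')\geq 100h_n\geq h_1$. If instead they belong to distinct $n$-frames $F\neq F'$, then since the set of centers of $n$-frames is $\tfrac12 r_n$-separated, the centers of $F$ and $F'$ are at least $r_n/2$ apart; now the hypothesis $\theta(B)=\theta(B')=\theta(w)$ lets us invoke Lemma~\ref{lem:separation-of-boxes-from-different-frames-in-same-level} with $i=\sigma(w)=\sigma(w')$, yielding $d(B^{(i)},(B')^{(i)})>\tfrac14 r_n$; since $w\in B^{(i)}$ and $w'\in (B')^{(i)}$ this forces $d(w,w')>\tfrac14 r_n=\tfrac14\cdot\tfrac{10}{9}w_n\geq\tfrac14\cdot\tfrac{10}{9}w_1\geq h_1$, where the final inequality uses $h_1/w_1\ll 1$. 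In every case we contradict $d(w,w')<h_1$, so one of the three alternatives must hold.

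I do not expect any real obstacle here: the corollary is essentially a bookkeeping consolidation of Lemmas~\ref{lem:separation-of-boxes-from-different-frames-in-same-level} and~\ref{lem:separation-of-witnesses-with-identical-orientations-and-section} together with the separation-of-boxes clause built into the definition of a certificate. The only mild care needed is to make sure the same-level, same-frame subcase is not overlooked (it is governed by the $d(B,B')\geq 100h_n$ requirement rather than by either lemma), and to confirm the constant comparisons $100h_n\geq h_1$, $h_{\min\{n,n'\}}\geq h_1$, and $\tfrac14 r_n\geq h_1$ all hold given the monotonicity of $(h_n),(w_n)$ and the smallness assumption $h_1/w_1\ll1$.
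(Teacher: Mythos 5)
Your proof is correct and follows essentially the same route as the paper: the same three-way case split (different levels via Lemma \ref{lem:separation-of-witnesses-with-identical-orientations-and-section}; same frame via the $100h_n$ box-separation clause in the definition of a certificate; same level but different frames via Lemma \ref{lem:separation-of-boxes-from-different-frames-in-same-level}). If anything, your handling of the different-frames-same-level case is slightly more explicit than the paper's, which states the conclusion of Lemma \ref{lem:separation-of-boxes-from-different-frames-in-same-level} a bit loosely.
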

\begin{proof}
If $w,w'$ come from the same frame, and $\theta(w)=\theta(w')$,
then their boxes are at least $100h_{n}$ apart, so their distance
is certainly at least $h_{1}$.

If $w,w'$ come from different frames in the same level, then either
$\theta(w)\neq\theta(w')$, or else, by Lemma \ref{lem:separation-of-boxes-from-different-frames-in-same-level},
 $\sigma(w)\neq\sigma(w')$.

Finally, if $w,w'$ come from different levels $n\neq n'$, and Lemma
\ref{lem:separation-of-witnesses-with-identical-orientations-and-section}
says that if their directions and sections agree then $d(w,w')\geq4h_{\min\{w,w'\}}>h_{1}$.
\end{proof}

\subsubsection*{Inserting new frames into a certificate}

If $C$ is a certificate, and if $u$ is a point that is $\frac{1}{2}r_{n}$-far
from the centers of all $n$-frames in $C$, then it may be possible
to add an $n$-frame $F$ with center $u$ to $C$, but for this we
must also define new boxes and witnesses in $F$. Observe that 
\begin{itemize}
\item In order to define a new box $B\in F$, it (or, rather, certain of
its sections) must not come too close to witnesses from higher levels.
\item In order to define a new witness $w\in B$, we need to stay away from
(certain sections of) boxes from lower levels.
\end{itemize}
Thus, the task of defining a frame is split into two essentially independent
parts.

The following lemma will be useful when we need to define boxes in
a new frame, and want them to intersect a set $E$ (the set where
we want to place witnesses, because we have freedom to define the
symbols there).
\begin{lem}
\label{lem:existence-of-many-almost-radial-rectangles}For every $n$-frame
$F$ and every path-connected set $E\subseteq F\setminus$$\frac{1}{5}F$
 of diameter $>\frac{1}{100}r_{n}$, there are $N_{n}+1$ rectangles
$R_{1},\ldots,R_{N_{n}+1}$ such that
\begin{enumerate}
\item The $R_{i}$ are almost radial relative to $F$.
\item $d(R_{i},R_{j})>100h_{n}$ for $i\neq j$.
\item All $\theta(R_{i})$ are equal and are in $\Theta$.
\item For each $i$ the set $E\cap R_{i}$ contains a connected component
that intersects both long edges of $R_{i}$.
\end{enumerate}
\end{lem}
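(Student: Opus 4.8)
The plan is to construct the rectangles $R_1,\ldots,R_{N_n+1}$ by exploiting the size and connectedness of $E$, together with the abundance of allowed orientations in $\Theta$. Since $E$ is path-connected, lies in the annulus $F\setminus\frac15 F$, and has diameter $>\frac{1}{100}r_n$, we can find a subarc of $E$ that ``travels transversally'' across an angular sector of the frame. The idea is that a long thin almost-radial rectangle with a suitable orientation will be crossed the ``short way'' by this subarc — i.e. it will contain a connected piece of $E$ joining its two long edges. We need $N_n+1$ such rectangles, pairwise $100h_n$-separated, all with a common orientation from $\Theta$.

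First I would reduce to a statement about a single path. Pick a path $\eta:[0,1]\to E$ realizing (up to a constant factor) the diameter of $E$, so the endpoints $\eta(0),\eta(1)$ are at distance $>\frac{1}{200}r_n$ say. Because $E\subseteq F\setminus\frac15F$, the path stays in the annulus, so it has bounded radial extent relative to its angular extent, or else it must wind; in either case, projecting onto the circle of directions shows that $\eta$ sweeps out an angular interval of length bounded below by a constant (depending only on the geometry of the annulus and the lower bound on $\mathrm{diam}(E)$), OR $\eta$ contains a long nearly-radial excursion. I would split into these two cases. In the ``angular'' case: choose a direction $u\in\Theta$ roughly perpendicular to the chord of $\eta$ — since $|\Theta|=1000$ the directions are $\frac{2\pi}{1000}$-dense, fine enough — and consider the family of parallel strips of width $w_n$ and length $h_n$ stacked in the direction perpendicular to $u$; as we slide such a strip along, the path $\eta$ (which crosses the whole angular band) must enter and exit through the two long sides of infinitely many positions, and we can extract $N_n+1$ of them spaced $\geq100h_n$ apart, provided the angular band is wide enough — which is exactly where we need the standing assumption $w_n\gg h_n$ and the (already available) freedom to take $h_n/w_n$ tiny. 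Each such strip, once centered correctly, satisfies the almost-radial conditions (1)--(3): it lies in $F$ because $\eta\subseteq F\setminus\frac15 F$ and the strip is thin, it meets $\partial F$ if we extend it outward slightly (or we place it touching the outer edge of the annulus), and its long axis extended passes within $\frac{1}{100}r_n$ of $c(F)$ because $u$ was chosen (nearly) radial with respect to the sector $\eta$ lives in.

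The main obstacle will be arranging \emph{all $N_n+1$} rectangles to have the \emph{same} orientation in $\Theta$ while keeping them $100h_n$-separated and almost radial. A single subarc of $E$ need not be long enough, angularly, to host $N_n+1$ disjoint well-separated crossings at one fixed orientation. I expect the resolution to be quantitative: the ``long'' or ``winding'' portion of $\eta$ has arclength comparable to $r_n$ (it connects two points $\Omega(r_n)$ apart inside a disk of radius $r_n$), and along a fixed direction $u$, the crossings of a strip-pencil of total width $\Theta(r_n)$ and individual strip-width $w_n$ number on the order of $r_n/w_n = 10/9$ — far too few. So instead I would iterate over the $1000$ orientations: $E$ must cross \emph{some} orientation's pencil many times, and more carefully, since $\mathrm{diam}(E)>\frac{1}{100}r_n$ and $E$ is connected and avoids $\frac15F$, a compactness/pigeonhole argument over the $1000$ angular sectors shows $E$ produces $\Omega(r_n/h_n)$ disjoint crossing-components of strips of \emph{one common} orientation — and since $w_n\gg h_n$ we can further guarantee $r_n/h_n \gg N_n$, so $N_n+1$ of them survive with $100h_n$ spacing. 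Concretely: fix $u\in\Theta$; the projection of $E$ onto the line in direction $u^\perp$ is an interval of length $\geq c\,r_n$ (for the best $u$, using that $E$ has large diameter in \emph{some} direction and $\Theta$ is $\frac{2\pi}{1000}$-dense); partition that interval into $\lfloor c r_n/(w_n+100h_n)\rfloor$ consecutive sub-intervals of width $w_n+100h_n$; in each, the preimage in $E$ of the middle width-$w_n$ sub-sub-interval, intersected with a bounded radial window of height $h_n$, contains (by connectedness of $E$ and the intermediate value theorem applied to the $u^\perp$-coordinate along a connecting path) a component meeting both long edges; finally $c r_n/(w_n+100h_n) \geq N_n+1$ because $r_n = \frac{10}{9}w_n$ and $w_n\gg h_n, n$ — here I would invoke $w_n\gg n$ and the freedom to enlarge parameters to absorb $N_n = 2^{10^6 n^2}$, noting $N_n$ depends only on $n$ so the chain of $\gg$'s in the Parameters subsection already accommodates it. This yields (1)--(4), completing the proof.
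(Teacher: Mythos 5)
Your overall strategy is the paper's: pass to a path, pick a good direction $u\in\Theta$, project onto $u^{\perp}$, stack rectangles transversally inside that projection, and get (4) from connectedness via the intermediate value theorem. But the concrete version you end with contains a counting error that is fatal. The rectangles must be stacked along $u^{\perp}$ so that each one occupies only its \emph{short} dimension $h_n$ of the transverse projection (the long sides, of length $w_n$, are parallel to the radial direction $u$; the path crosses between them by moving only $h_n$ transversally). You instead partition the length-$cr_n$ projection into blocks of width $w_n+100h_n$ and claim $cr_n/(w_n+100h_n)\geq N_n+1$ ``because $w_n\gg h_n,n$''. Since $r_n=\tfrac{10}{9}w_n$, this quotient is at most $\tfrac{10c}{9}<2$, nowhere near $N_n+1$ -- indeed you flag exactly this obstruction yourself (``$r_n/w_n=10/9$ --- far too few'') and then revert to it. The correct count is $cr_n/(101h_n)$, which is $\gg N_n$ precisely because $w_n\gg h_n,n$ and $N_n$ depends only on $n$; this is what the paper uses (it arranges the transverse projection of $E$ to have length $\geq 1000(N_n+1)h_n$).

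The second gap is the choice of $u$. In your final argument you take the $u\in\Theta$ maximizing the length of the $u^{\perp}$-projection of $E$, but such a $u$ need not be compatible with almost-radiality: if $E$ is (close to) a radial segment, the maximizing $u$ is tangential, and then no almost-radial rectangle with long sides parallel to $u$ (whose extension must pass within $\tfrac{1}{100}r_n$ of $c(F)$) can even reach $E$, which sits at distance $\geq\tfrac15 r_n$ from the center. You sense this tension earlier (``$u$ roughly perpendicular to the chord'' versus ``$u$ nearly radial'') but never resolve it. The paper's resolution is to first localize $E$ to a sub-path confined to a narrow angular sector, restrict attention to the \emph{two adjacent} directions $u,v\in\Theta$ pointing radially at that sector (so that almost-radial rectangles in either direction do meet $E$), and then observe that at least one of the projections to $u^{\perp}$ or $v^{\perp}$ is an interval of length $\geq cr_n$ for a universal $c>0$, since $u$ and $v$ differ by a definite angle $2\pi/1000$ and $\operatorname{diam}E>\tfrac{1}{100}r_n$. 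With those two corrections your argument matches the paper's.
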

\begin{proof}
Without loss of generality we can assume that $E$ is a simple path.
Choose $u,v\in\Theta$ that differ by an angle of $2\pi/1000$ and
such that the projection of $E$ to $u^{\perp}$ and $v^{\perp}$
both come within $r_{n}/50$ of the corresponding projection of $c(F)$.
It is elementary to see that there is a universal constant $c>0$
such that one of these projections of $E$ is an interval of length
at least $cr_{n}$. Suppose $u$ is this direction. Since $r_{n}\gg h_{n},n$
we may assume the projection of $E$ in direction $u^{\perp}$ has
length is at least $1000(N_{n}+1)h_{n}$. This allows us to construct
$N_{n}+1$ almost radial rectangles in direction $u$ satisfying (1)-(3)
and such that their projection to $u^{\perp}$ lies inside the projection
of $E$, and this ensures also (4).
\end{proof}

\subsection{The alphabet}

Recall that $\Theta$ is the set of angles that are multiples of $2\pi/1000$
and $\Sigma=\{1,\ldots,1000\}$ is the index set of the sections of
boxes. Our alphabet will be 
\[
A=\{H,T\}^{\Theta\times\Sigma}
\]
where $H,T$ represent ``heads'', ``tails'' respectively. For
$(\theta,\sigma)\in\Theta\times\Sigma$ we write 
\[
\pi_{(\theta,\sigma)}:A\rightarrow\{H,T\}
\]
 for the corresponding coordinate projection, and extend $\pi_{(\theta,i)}$
to a projection $A^{\mathbb{Z}^{2}}\rightarrow\{H,T\}^{\mathbb{Z}^{2}}$. 

One should think of a configuration $x\in A^{\mathbb{Z}^{2}}$ as
consisting of $1000^{2}$ ``layers'' $\pi_{\theta,\sigma}x\in\{H,T\}^{\mathbb{Z}^{2}}$,
each layer indexed by $\Theta\times\Sigma$ and corresponding classes
of sections determined by orientation ($\theta\in\Theta$) and section
number ($\sigma\in\Sigma$). 

\subsection{Compatibility}

Recall that we have defined witnesses to be points in $\mathbb{R}^{2}$,
but would like to use them to index configurations in $\mathbb{Z}^{2}$.
To this end, for $w=(w_{1},w_{2})\in\mathbb{R}^{2}$ write $[w]=([w_{1}],[w_{2}])$,
where $[\cdot]$ denotes the integer part. Then we define
\[
x_{w}=x_{[w]}
\]

If $x\in A^{\mathbb{Z}^{2}}$ is a configuration and $F$ is an $n$-frame,
we define a \textbf{coin and bucket configuration $CBC(x,F)$} as
follows:
\begin{itemize}
\item The number of buckets is $n^{2}\cdot|\Theta|\cdot|\Sigma|$, indexed
by $(\mathbb{Z}/n\mathbb{Z})^{2}\times\Theta\times\Sigma$.
\item For every witness $w\in F$ there is associated a coin with orientation
$\pi_{(\theta(w),\sigma(w))}(x_{w})\in\{H,T\}$ in the bucket indexed
by $([w]\bmod n,\theta(w),\sigma(w))$.

Since each $n$-frame contains $N_{n}$ witnesses, there are altogether
$N_{n}$ coins in $CBC(x,F)$.
\end{itemize}
We say that $x\in A^{\mathbb{Z}^{2}}$ is: 
\begin{itemize}
\item \textbf{Compatible with a frame }$F$ if $CBC(x,F)$ is unorientable
in the sense of Section \ref{sec:Coins-and-buckets}.
\item \textbf{Compatible with a certificate }$C$ if it is compatible with
all frames (from all levels) of $C$.
\end{itemize}

\subsection{Definition of the subshift}

Let
\[
X=\left\{ x\in A^{\mathbb{Z}^{2}}\mid\text{There exists a dense certificate compatible with }x\right\} 
\]

\begin{lem}
If $x$ is compatible with a certificate $C$ then $x$ is aperiodic: 
\end{lem}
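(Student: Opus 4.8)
The goal is to show that if $x$ is compatible with a certificate $C$, then $x$ has no nonzero period. Suppose for contradiction that $x$ is $v$-periodic for some nonzero $v\in\mathbb{Z}^2$; then in particular $x$ is $n$-periodic (i.e. invariant under the subgroup $n\mathbb{Z}^2$) for $n=|v|$ or for some suitable multiple, so it suffices to derive a contradiction from the assumption that $x$ is invariant under $n\mathbb{Z}^2$ for some fixed $n\geq 1$. The plan is to locate a single $n$-frame $F$ of $C$ and show that $CBC(x,F)$ is in fact \emph{orientable}, contradicting compatibility. Since the certificate is dense, $n$-frames exist; fix one, call it $F$, with $N_n$ witnesses and hence $N_n = 2^{1{,}000{,}000\cdot n^2}$ coins distributed among $n^2\cdot|\Theta|\cdot|\Sigma| = n^2\cdot 1000^2$ buckets indexed by $(\mathbb{Z}/n\mathbb{Z})^2\times\Theta\times\Sigma$.

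\textbf{Key steps.} First I would observe that because $x$ is $n\mathbb{Z}^2$-invariant, the orientation $\pi_{(\theta(w),\sigma(w))}(x_w) = \pi_{(\theta(w),\sigma(w))}(x_{[w]})$ of the coin attached to a witness $w$ depends only on the bucket that $w$ is placed into: indeed the bucket records $[w]\bmod n$ together with $(\theta(w),\sigma(w))$, and $x_{[w]}$ depends only on $[w]\bmod n$ by periodicity, while the relevant layer is $(\theta(w),\sigma(w))$, which is also part of the bucket index. Consequently every bucket of $CBC(x,F)$ is already \emph{oriented} — all coins in a common bucket share the same orientation — so $CBC(x,F)$ is an oriented configuration with zero moves needed, in particular orientable. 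This directly contradicts the requirement that $x$ be compatible with $F$, which demands $CBC(x,F)$ be \emph{unorientable}. Hence $x$ admits no nonzero period, i.e. $x$ is aperiodic.

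\textbf{Main obstacle.} The one point requiring a little care is the reduction from ``$x$ has a finite orbit'' (equivalently, $x$ is $v$-periodic for some single nonzero $v$, or periodic under a finite-index subgroup) to ``$x$ is $n\mathbb{Z}^2$-invariant for some $n\geq 1$'': a point with finite orbit under $\mathbb{Z}^2$ is stabilized by a finite-index subgroup $\Lambda\leq\mathbb{Z}^2$, and any such $\Lambda$ contains $n\mathbb{Z}^2$ for $n=[\mathbb{Z}^2:\Lambda]$ (or, more simply, for $n$ the least common multiple of suitable entries), so $x$ is $n\mathbb{Z}^2$-invariant for that $n$. Given this, the argument above applies to that $n$, and we are done. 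Everything else is immediate from the definitions of $CBC(x,F)$, of ``compatible,'' and of ``oriented''; no quantitative estimates or appeals to the earlier propositions are needed here — the density of the certificate is used only to guarantee that at least one $n$-frame exists.
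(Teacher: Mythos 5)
Your proof is correct and is essentially the paper's argument in contrapositive form: the paper fixes $n$, takes an $n$-frame $F$, and extracts from the unorientability of $CBC(x,F)$ a bucket containing coins of both orientations, i.e.\ two witnesses $w,w'$ with $[w]=[w']\bmod n$, $\theta(w)=\theta(w')$, $\sigma(w)=\sigma(w')$ and $\pi_{(\theta,\sigma)}(x_w)\neq\pi_{(\theta,\sigma)}(x_{w'})$, whereas you assume $n$-periodicity and note that every bucket is then already oriented, so $CBC(x,F)$ would be orientable. The underlying observation (an oriented configuration is trivially orientable, so an unorientable one must have a mixed bucket, and bucket membership encodes exactly the residue class and layer) is identical in both, so this is the same proof up to direction of argument.
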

\begin{proof}
Fix $n\in\mathbb{N}$; it suffices to show that $x$ is $n$-aperiodic.
Let $F$ be an $n$-frame in $C$. Then $x$ is compatible with $F$,
so $CBC(x,F)$ is unorientable and, in particular, there is a bucket
$(n,\theta,\sigma)$ in $CBC(x,F)$ containing coins of different
orientations. If $w,w'\in F$ are witnesses  corresponding to these
coins, then the fact that they are in the same bucket means that $w=w'\bmod n$,
$\theta(w)=\theta(w')=\theta$ and $\sigma(w)=\sigma(w')=\sigma$.
The fact that the coins have different orientations means that $\pi_{\theta,\sigma}(x_{w})\neq\pi_{\theta,\sigma}(x_{w'})$.
Thus $x_{w}\neq x_{w'}$, and $x$ not $n$-periodic. 
\end{proof}
\begin{lem}
$X$ is shift invariant.
\end{lem}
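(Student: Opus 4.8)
The plan is to show that if $x \in X$ then $\sigma^v x \in X$ for every $v \in \mathbb{Z}^2$, where $\sigma^v$ denotes the shift by $v$. By definition, $x \in X$ means there is a dense certificate $C$ compatible with $x$; I will produce a dense certificate $C - v$ compatible with $\sigma^v x$ simply by translating all the geometric data of $C$ by $-v$.

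First I would define the translate $C - v$ of a certificate $C$: translate every frame center, every box, every section, and every witness by $-v$. Since $v \in \mathbb{Z}^2$, translation by $-v$ is an isometry of $\mathbb{R}^2$ that preserves all the metric conditions in the definition of a certificate --- the $\tfrac12 r_n$-separation and $10 r_n$-density of frame centers, the $w_n \times h_n$ dimensions and $100 h_n$-separation of boxes, the almost-radial property relative to the (translated) frame, the subdivision into sections, and the orientations (which lie in $\Theta$ and are unchanged by translation). The one point deserving a sentence is that safety of witnesses is preserved: the collection $\mathcal{R}_C(B)$ translates to $\mathcal{R}_{C-v}(B-v)$, and since $\safe(\cdot)$ is defined from a construction (the hull) that commutes with isometries of the plane, $w \in \safe(\mathcal{R}_C(B))$ implies $w - v \in \safe(\mathcal{R}_{C-v}(B-v))$. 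Density is likewise preserved because translation by $-v$ maps $R$-disks to $R$-disks. Hence $C - v$ is again a dense certificate.

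Next I would check compatibility of $\sigma^v x$ with $C - v$. Fix an $n$-frame $F - v$ of $C - v$, coming from an $n$-frame $F$ of $C$. I claim $CBC(\sigma^v x, F - v) = CBC(x, F)$ as coin-and-bucket configurations. The witnesses of $F - v$ are exactly $\{w - v \mid w \in F\}$, with the same orientation and section labels as their counterparts in $F$ (these are combinatorial data attached to the box, untouched by translation). The coin of $w - v$ has orientation $\pi_{(\theta(w),\sigma(w))}\big((\sigma^v x)_{w - v}\big)$. Now $(\sigma^v x)_{w-v} = (\sigma^v x)_{[w-v]} = (\sigma^v x)_{[w]-v} = x_{[w]} = x_w$, using that $v \in \mathbb{Z}^2$ so $[w - v] = [w] - v$, and the definition of the shift. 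Moreover the bucket index is $([w-v] \bmod n, \theta(w-v), \sigma(w-v)) = ([w]-v \bmod n, \theta(w), \sigma(w))$; this is a relabelling of buckets by the bijection $a \mapsto a - v$ on $(\mathbb{Z}/n\mathbb{Z})^2$, which is an isomorphism of coin-and-bucket configurations and in particular preserves (un)orientability. Therefore $CBC(\sigma^v x, F-v)$ is unorientable iff $CBC(x,F)$ is, which it is by compatibility of $x$ with $C$. So $\sigma^v x$ is compatible with $C - v$, giving $\sigma^v x \in X$. Applying this to $v$ and to $-v$ gives $\sigma^v X = X$.

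I do not expect a genuine obstacle here; the content is entirely bookkeeping, and the only place requiring a moment's care is verifying that the auxiliary geometric structures (safe points in particular) are translation-equivariant --- which follows because every construction in Sections \ref{sec:Coins-and-buckets} and \ref{subset:sparse-obstacle-definitions} is phrased intrinsically and commutes with rigid motions --- together with the identity $[w - v] = [w] - v$ that is valid precisely because $v$ has integer coordinates. If one wanted to be maximally careful one would note that the rule fixed in advance for breaking ties in the hull construction (Definition \ref{def:hulls}) should be chosen equivariantly, or simply observe that the conclusions of Proposition \ref{prop:properties-of-hulls} used to define $\safe$ are independent of those choices.
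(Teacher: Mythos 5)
Your proof is correct and is exactly the paper's argument: the paper simply states that a shift of $x$ is compatible with the "corresponding shift of $C$, defined in the obvious way," and your proposal fills in the routine verification that translation by an integer vector preserves all the certificate conditions and the coin-and-bucket data. No issues.
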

\begin{proof}
If $x$ is compatible with a certificate $C$, then every shift of
$x$ is compatible with the corresponding shift of $C$, defined in
the obvious way, so the shifts of $x$ are also in $X$.
\end{proof}
Next, there is an obvious local way to define the distance between
$n$-boxes, $n$-frames and certificates, which makes the space of
certificates compact and metrizable. We would like the following to
be true:
\begin{lem}
$X$ is closed.
\end{lem}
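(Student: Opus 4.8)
The plan is to prove that $X$ is closed by a standard compactness argument: take a sequence $x^{(k)} \to x$ in $A^{\mathbb{Z}^2}$, extract from the associated compatible dense certificates $C^{(k)}$ a convergent subsequence $C^{(k_j)} \to C$ in the compact space of certificates, and verify that the limit $C$ is a dense certificate compatible with $x$. The main work is checking that all the defining properties of a certificate, as well as denseness and compatibility, are closed conditions under this limit.

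First I would make precise the topology on certificates: a certificate is determined by its collection of frames, each frame by its center $c(F) \in \mathbb{R}^2$, the common orientation $\theta(F) \in \Theta$ of its boxes, the $N_n$ box positions (equivalently, a point in a compact configuration space since boxes are $100h_n$-separated inside a frame of bounded radius), and the $N_n$ witnesses $w(B) \in B$. Since all these data live in compact sets and the frames themselves form a locally finite configuration in $\mathbb{R}^2$ (centers are $\frac12 r_n$-separated on each level), the space of certificates is compact and metrizable in the natural local topology: $C^{(k)} \to C$ means that on every bounded region of $\mathbb{R}^2$, for $k$ large the frames of $C^{(k)}$ (together with their boxes, sections, witnesses) converge to those of $C$. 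I would then observe that each structural constraint is either a closed condition or at worst can be arranged to pass to the limit: the separation conditions ($\frac12 r_n$-separation of centers, $100h_n$-separation of boxes, almost-radiality) are all given by non-strict inequalities and hence closed; the orientations lie in the finite set $\Theta$, so they are eventually constant along the subsequence; and the safety condition on witnesses, $w(B) \in \safe(\mathcal{R}_C(B))$, passes to the limit because $\safe$ is defined via the hull construction and part (4)/(1) of Proposition~\ref{prop:safe-paths} together with the local-agreement property show that safety is preserved under the convergence (the finitely many relevant lower-level boxes near $w(B)$ stabilize).

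Next I would handle denseness and compatibility. Denseness — that the set of $n$-frame centers is $10r_n$-dense — is a closed condition: if every $10r_n$-disk meets the centers of $C^{(k)}$ for all $k$, the same holds in the limit $C$ (a disk's worth of centers cannot escape to infinity or disappear, given the uniform separation). For compatibility, recall $x$ is compatible with a frame $F$ iff $CBC(x,F)$ is unorientable. The key point is that $CBC(x^{(k)}, F^{(k)})$ depends only on finitely much local data: the $N_n$ witnesses in $F^{(k)}$, their residues mod $n$, their classes in $\Theta \times \Sigma$, and the symbols of $x^{(k)}$ at the sites $[w]$. For $k$ large along the subsequence, $F^{(k)}$ converges to the corresponding frame $F$ of $C$ and, crucially, the sites $[w]$ of the witnesses stabilize (a witness $w(B^{(k)})$ converges to $w(B) \in \mathbb{R}^2$; if $w(B)$ has non-integer coordinates then $[w(B^{(k)})] = [w(B)]$ eventually, and a small argument — or genericity of the construction — handles boundary cases), and $x^{(k)} \to x$ means the symbols at these finitely many sites stabilize too. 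Hence $CBC(x^{(k)}, F^{(k)}) = CBC(x, F)$ for large $k$, so unorientability transfers. Since every frame of $C$ is a limit of frames of $C^{(k_j)}$, $x$ is compatible with every frame of $C$, so $x \in X$.

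The step I expect to be the main obstacle is the one involving witnesses landing on the integer lattice boundary: a priori a witness $w(B^{(k)}) \in \mathbb{R}^2$ could converge to a point $w(B)$ with an integer coordinate, in which case $[w(B^{(k)})]$ need not converge to $[w(B)]$, and then $CBC(x^{(k)}, F^{(k)})$ need not equal $CBC(x, F)$. Resolving this requires either perturbing the limit certificate's witnesses slightly (staying safe, which is possible since $\safe$ is an open-ish set containing a whole polygonal path through each witness by Proposition~\ref{prop:safe-paths}(3)) so that the map $w \mapsto [w]$ is locally constant near them, or arguing that one can always choose the certificates $C^{(k)}$ with witnesses in ``generic'' position to begin with. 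I would also need to be slightly careful that the limiting collection of frames genuinely realizes all the box/section/witness data coherently — but this follows from the compactness of each piece and the finite-range nature of all constraints. The rest is routine verification that non-strict inequalities are closed.
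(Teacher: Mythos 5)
Your overall strategy is exactly the paper's: pass to a convergent subsequence of certificates in the compact space of certificates and check that every defining condition, plus denseness and compatibility, survives the limit. You have also put your finger on precisely the one point the paper itself singles out as the genuine difficulty: a witness $w(B^{(k)})$ can converge to a point with an integer coordinate, so $[w(B^{(k)})]$ need not converge to $[w(B)]$, and compatibility (which reads off symbols at $[w]$) does not obviously pass to the limit.

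Where you diverge from the paper is in how this is repaired, and neither of your two suggested repairs is complete as stated. Option (b) --- choosing each $C^{(k)}$ with witnesses in ``generic position'' --- does not work: genericity of each term of a sequence says nothing about the limit (e.g.\ $w^{(k)}=(1/k,\tfrac12)\to(0,\tfrac12)$), so the limit witness can still land on a lattice line. Option (a) --- perturbing the limit witness --- can be made to work but needs more than you say: after passing to a further subsequence one may assume $[w(B^{(k)})]=s$ is constant, and the perturbed witness must land specifically in $s+[0,1)^2$ (note $s$ need not equal $[w(B)]$), because compatibility of $x$ is inherited from the $x^{(k)}$ only through the symbol at the site $s$ and the residue $s\bmod n$; moreover the perturbation must keep the witness inside its box, inside the same section, and safe (openness of $\safe$ gives the last, but the first two require a check when $w(B)$ sits on a box or section boundary). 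The paper sidesteps all of this by changing the definition: a witness is declared to be a pair $(w,s)$ with $s\in\mathbb{Z}^2$ and $\left\Vert w-s\right\Vert_\infty\leq\tfrac12$, and $s$ replaces $[w]$ in the definition of compatibility. The space of such pairs is still compact, convergence of witnesses now forces the discrete data $s$ to stabilize, and your own argument that $CBC(x^{(k)},F^{(k)})$ eventually equals $CBC(x,F)$ then goes through verbatim. I recommend adopting that fix rather than patching the limit certificate after the fact.
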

The proof should go as follows: Suppose $x_{n}\rightarrow x$ in $A^{\mathbb{Z}^{2}}$
and $x_{n}$ in compatible with a certificate $C_{n}$. By compactness
of the space of certificates, we can pass to a subsequence and assume
that $C_{n}\rightarrow C$. Then $x$ is compatible with $C$, so
$x\in X$.

This argument is flawed because pointwise convergence of witnesses,
as points in $\mathbb{R}^{2}$, does not imply convergence of their
integer parts. For this reason, compatibility does not pass to the
limit.

One can correct this flaw as follows: rather than defining witnesses
to be points in $\mathbb{R}^{2}$, define them as pairs $(w,s)$ where
$w\in\mathbb{R}^{2}$ and $s\in\mathbb{Z}^{2}$ with $\left\Vert w-s\right\Vert _{\infty}\leq1/2$,
and use $s$ in place of $[w]$ in the definition of compatibility.
With this change, the space of certificates is still compact, and
convergence of witnesses includes convergence of the ``integer parts''
$s$, so compatibility does pass to the limit. and the proof above
goes through.

We continue to write $[w]$ instead of $s$ but this should not cause
any confusion.

\begin{lem}
$X\neq\emptyset$.
\end{lem}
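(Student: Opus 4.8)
The plan is to build a single dense certificate $C$ together with a configuration $x\in A^{\mathbb{Z}^2}$ compatible with it, by an inductive/limiting construction in which all the levels are laid down simultaneously on progressively larger regions. The skeleton is straightforward: choose the frame centers on each level $n$ to be a fixed periodic lattice that is $\tfrac12 r_n$-separated and $10 r_n$-dense (e.g. a scaled copy of $\mathbb{Z}^2$), pick one fixed orientation $\theta_n\in\Theta$ for all boxes on level $n$, and — using the separation between scales, $w_{n+1}\gg h_{n+1}\gg w_n\gg h_n$ — arrange the lattices so that boxes from different levels interact only through the controlled mechanism in the definition of $\mathcal{R}_C(B)$. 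Because we get to design the geometry from scratch, we are free to make all the relevant sparsity and separation hypotheses hold: the $n$-boxes within a frame are $100h_n$-separated, and across frames/levels the separation lemmas of the previous subsection apply.

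The two substantive points are (a) that the geometric data (frames, boxes, witnesses) can actually be placed consistently, and (b) that $x$ can be chosen to make $CBC(x,F)$ unorientable for every frame $F$ on every level. For (a): given the frame centers and the common orientation $\theta_n$, Lemma~\ref{lem:existence-of-many-almost-radial-rectangles} (or a direct construction, since here $E$ can be taken to be a long almost-radial segment of the frame) produces the $N_n$ required $100h_n$-separated almost-radial boxes in each $n$-frame. Inside each box $B$ we must choose a witness $w(B)\in B$ that is safe with respect to the finite $80$-sparse family $\mathcal{R}_C(B)$; by part (2) of Proposition~\ref{prop:safe-paths}, every vertical (in the box's coordinate system) segment of length $8h_n$ inside $B$ meets the safe set, and since $B$ has height $h_n$ we do need the box to be long enough in that direction — but $w_n\gg h_n$, and "vertical" there means along the short side, so one instead uses that the safe set is nonempty and in fact large inside $B$; picking any safe point in the middle section suffices. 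The key is that $\mathcal{R}_C(B)$ only involves lower-level boxes with the same orientation, of which there are boundedly many near $B$, so safeness is a genuinely satisfiable constraint. One should lay down the levels from $n=1$ upward, or handle all levels at once on a large ball and then exhaust; either way the finitely-many local constraints at each witness are compatible.

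For (b): recall $N_n = 2^{1{,}000{,}000\,n^2}$ and that there are $n^2\cdot|\Theta|\cdot|\Sigma| = 1000^2 n^2$ buckets in $CBC(x,F)$, and by Proposition~\ref{prop:coins-and-buckets-1} any single bucket containing $\geq 2^{1000^2 n^2}$ coins in the lexicographically-extreme "split" configuration $c_{k,n}$ is unorientable — and $N_n \geq 2^{1000^2 n^2}$, so if we could dump all $N_n$ coins of frame $F$ into one bucket in that split pattern we would be done. The witness-to-bucket map is determined by $([w]\bmod n, \theta(w),\sigma(w))$; since all boxes in $F$ share the orientation $\theta_n$, and we are free to put all their witnesses in the same section index, all $N_n$ witnesses of $F$ can be made to land in buckets indexed by a single $(\theta_n,\sigma)$, i.e. in at most $n^2$ buckets. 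The orientation of a coin is $\pi_{(\theta_n,\sigma)}(x_{[w]})\in\{H,T\}$, which we are free to set at each site $[w]$ — and by Corollary~\ref{cor:witnesses-differ-in-position-direction-or-section} distinct witnesses in a given class occupy distinct sites (up to $h_1$-separation, hence distinct integer parts, once $h_1\gg 1$), so there is no conflict in assigning these symbols. Thus we can prescribe the $\{H,T\}$-value of each witness of $F$ independently, hence realize, bucket-by-bucket, a configuration at least as "hard" as $c_{k,n}$ restricted to the occupied buckets; concretely, choose the geometry so that all $N_n$ witnesses fall into one residue class mod $n$ (possible by taking the frame lattice spacing divisible by $n$ and the boxes positioned accordingly, or more simply by noting that some residue class receives $\geq N_n/n^2 \geq 2^{(1000^2-1)n^2} \geq 2^{1000^2 n^2}$... which fails — so instead force a single class), and orient those coins in the alternating heads/tails pattern of $c_{k,n}$'s first bucket; unorientability then follows from Proposition~\ref{prop:coins-and-buckets-1}.

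The main obstacle is precisely this last coordination: making the witness-to-bucket assignment concentrate enough coins, in a prescribable orientation pattern, into buckets on which we have independent control of the symbols — all while the witnesses are constrained to be safe points and the sites $[w]$ must be genuinely distinct across overlapping boxes of other levels. The geometry (lattice spacings divisible by $n$, common orientation per level, common section index, the scale separations $w_{n+1}\gg h_{n+1}\gg w_n\gg h_n$ and $h_1\gg 1$) has to be chosen so that all of these hold simultaneously; once that bookkeeping is set up, compatibility with each frame reduces to a single application of Proposition~\ref{prop:coins-and-buckets-1}, and the resulting certificate is dense by construction, so $x\in X$ and $X\neq\emptyset$.
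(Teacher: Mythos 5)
Your overall skeleton matches the paper's: exhibit a dense certificate, use the separation of witnesses (Corollary \ref{cor:witnesses-differ-in-position-direction-or-section}) to conclude that the coordinate $\pi_{(\theta(w),\sigma(w))}(x_{[w]})$ can be prescribed independently as $w$ ranges over all witnesses, and then choose these values so that $CBC(x,F)$ is unorientable for every frame $F$. The first two parts are fine; the paper in fact only asserts that certificates exist, so your explicit lattice construction supplies detail the paper omits.

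The gap is in the last step, and you flag it yourself without resolving it. You try to reduce to the literal statement of Proposition \ref{prop:coins-and-buckets-1} by concentrating all $N_n$ coins of a frame into a single bucket; since pigeonhole only puts $N_n/n^2<2^{1000^2n^2}$ coins in the largest bucket, you propose to force all $N_n$ witnesses of each $n$-frame into one residue class mod $n$. This is not justified: each witness is constrained to be a safe point of the family $\mathcal{R}_{C}(B)$ of its box, which is dictated by the lower-level boxes, and no argument is given that one can simultaneously prescribe $[w]\bmod n$ for all $N_n$ witnesses of every frame at every level while keeping them safe; the constraints interact across levels. More importantly, the detour is unnecessary. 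The bucket of each coin is forced by the certificate --- it is $([w]\bmod n,\theta(w),\sigma(w))$ --- and all you get to choose is the orientations. But the proof of Proposition \ref{prop:coins-and-buckets-1} (via the ``modified game,'' where all coins begin in the pile) bounds the \emph{total} number of coins over all $k$ buckets: any initial configuration reachable from the all-in-the-pile state by legal insertions --- in particular any configuration in which each bucket is split as evenly as possible between heads and tails --- that can subsequently be oriented must have at most $2^{k}-1$ coins in total. Since $N_n=2^{1000^2n^2}=2^{k}$ for $k=n^2|\Theta||\Sigma|$, it suffices to split every bucket of $CBC(x,F)$ evenly, whatever bucket distribution the geometry happens to produce. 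This is what the paper means by ``by choice of $N_n$ we can choose $c_F$ to be unorientable''; with that replacement your argument closes.
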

\begin{proof}
Fix a dense certificate $C$ (it is not hard to see that, taking all
$h_{n},w_{n}$ large relative to their predecessors, dense certificates
exist). 

By Corollary \ref{cor:witnesses-differ-in-position-direction-or-section},
witnesses $w,w'\in C$ with the same orientation and section must
satisfy $d(w,w')>w_{1}>2$ and hence the integer points derived from
$w,w'$ satisfy $[w]\neq[w']$.

It follows that if for every frame $F\in C$ we fix a coin and bucket
configuration $c_{F}$ on $n^{2}\cdot|\Theta|\cdot|\Sigma|$ buckets,
then there is a configuration $x\in A^{\mathbb{Z}^{2}}$ with $CBC(x,F)=c_{F}$;
for, by the above, the symbols $(x_{w})_{(\theta(w),\sigma(w))}\in\{H,T\}$
can be defined independently as $w$ ranges over all witnesses in
$C$.

By choice of $N_{n}$ we can choose $c_{F}$ to be unorientable for
each frame $F\in C$. Then the configuration $x$ above is compatible
with $C$, and hence $x\in X$. This shows that the subshift $X$
is non-empty.
\end{proof}
Combining the lemmas above we get:
\begin{cor}
$X$ is a non-empty subshift.
\end{cor}

\section{\label{sec:Strong-irriducibilty}Strong irreducibility}

In this section we show that the subshift $X$ defined above is strongly
irreducible.

\subsection{Reduction to bounded, simply connected regions}

Throughout this section and sometimes later, we view $\mathbb{Z}^{2}$
as the vertex set of a graph with edges between vertices at distance
one (so $(u,v)\in\mathbb{Z}^{2}$ is connected to the vertices $(u\pm1,v)$,
$(u,v\pm1)$), and identify a subset of $\mathbb{Z}^{2}$ with the
induced subgraph. Thus we will speak of connected or disconnected
subsets of $\mathbb{Z}^{2}$, of connected components in the set,
etc.

Strong irreducibility with gap $g$ is the property
\begin{quote}
(A) $X$ admits gluing along all pairs $E,F\subseteq\mathbb{Z}^{2}$
satisfying $d(E,F)>g$.
\end{quote}
We now perform a sequence of reductions which show that in order to
establish strong irreducibility with gap $g$, it suffices to show
that the subshift admits gluing for $(\frac{1}{2}g-2)$-separated
sets that satisfy some additional properties. 

\subsubsection*{We can assume $F$ is finite and $E$ has finitely many connected
components}

Indeed, we first claim that (A) follows from
\begin{quote}
(B) $X$ admits gluing along all pairs $E,F\subseteq\mathbb{Z}^{2}$
with $d(E,F)>g$ and $E,F$ are finite (and in particular, have finitely
many connected components).
\end{quote}
For, given arbitrary $E,F$ with $d(E,F)\geq g$, and given $x,y\in X$,
for each $n$ we can use (B) to find a gluing $z_{n}$$\in X$ of
$x|_{E\cap[-n,n]^{2}}$ and $y|_{F\cap[-n,n]^{2}}$. Any accumulation
point of $(z_{n})$ is then a gluing of $x|_{E}$ and $y|_{F}$. 

\subsubsection*{We can assume connected components are $\frac{1}{2}g-2$ separated}

Specifically, we claim that (B) follows from 
\begin{quote}
(C) $X$ admits gluing along pairs $E,F$ that satisfy 
\begin{enumerate}
\item [C1.] $d(E,F)>\frac{1}{2}g-2$.
\item [C2.] $E,F$ have are finite.
\item [C3.] Every pair of connected components in $E$ or $F$ is $(\frac{1}{2}g-2)$-separated, 
\end{enumerate}
\end{quote}
Indeed, suppose that $E,F$ satisfy (B). We will enlarge and merge
pairs of components to obtain sets $E',F'$ containing $E,F$, respectively,
in a way that (C1)-(C3) are satisfied; then (C) ensures gluing along
$E',F'$, which implies gluing along $E,F$.

To this end, view $E,F$ as subsets of $\mathbb{R}^{2}$, and enlarge
$E$ by adding to it all line segments between pairs $u,v\in E$ that
are neighbors in $\mathbb{Z}^{2}$. Do the same for $F$. Note that
subsets of $E\cap\mathbb{Z}^{2}$ that were previously connected components
in the induced graph from $\mathbb{Z}^{2}$ correspond to components
in the topological sense, and that $E,F$ are still $g$-separated
after adding these line segments.

Given any $u,v\in E\cap\mathbb{Z}^{2}$ that lie in different connected
components of $E$ and satisfy $d(u,v)<\frac{1}{2}g$, introduce a
line segment $\ell=\ell_{u,v}$ connecting $u,v$. Let $\widehat{E}$
denote the union of $E$ and all line segments $\ell_{u,v}$ as above.
Define $\widehat{F}$ analogously.
\begin{claim}
Suppose that $\ell_{u,v}$ and $\ell_{u',v'}$ are two line segments
with $d(u,v)<\frac{1}{4}g$ and $d(u',v')<\frac{1}{4}g$. Then
\begin{enumerate}
\item If $w\in\mathbb{R}^{d}$ and $d(\ell_{u,v},w)<\frac{1}{4}g$ then
$d(\{u,v\},w)<\frac{1}{2}g$.
\item If $d(\ell_{u,v},\ell_{u',v'})<\frac{1}{4}g$ then $d(\{u,v\},\{u',v'\})<\frac{1}{4}g$.
\end{enumerate}
\end{claim}
\begin{proof}
(1) Since the length of $\ell_{u,v}$ is less than $\frac{1}{2}g$,
every $z\in\ell_{u,v}$ is at distance of no more than $\frac{1}{4}g$
from one of the endpoints. Therefore, if $d(w,\ell)\leq\frac{1}{4}g$,
and choosing $z\in\ell_{u,v}$ to be the point nearest $w$, we have
\[
d(w,\{u,v\})\leq d(w,z)+d(z,\{u,v\})<\frac{1}{4}g+\frac{1}{4}g=\frac{1}{2}g
\]
(2) If $d(\ell_{u,v},\ell_{u',v'})<\frac{1}{4}g$, then the distance
is attained a pair of points $z\in\ell_{u,v}$ and $z'\in\ell_{u',v'}$
with either $z\in\{u,v\}$ or $z'\in\{u',v'\}$. In the first case,
apply (1) with $w=z$ to conclude $d(z,\{u',v'\})<\frac{1}{2}g$,
and $z\in\{u,v\}$ gives us $d(\{u,v\},\{u',v'\})<\frac{1}{2}g$ as
claimed. The other case follows similarly.
\end{proof}
Since $\widehat{E},\widehat{F}$ differ from $E,F$ only by the addition
of segments $\ell_{u,v}$ as above between connected components, it
follows from the claim that if $I\subseteq\widehat{E}$ is a connected
component, then for any $u\in I$ we have $d(u,w)>\frac{1}{2}g$ for
$w\in\widehat{F}\cup(\widehat{E}\setminus I)$. 

We now return to $\mathbb{Z}^{2}$: For each line segment $\ell$
in $\widehat{E}$ add to $E$ the integer points in a $1$-neighborhood
of $\ell$. In the resulting set $E'$, each connected component (in
the graph $\mathbb{Z}^{2}$) is obtained from a connected component
of $\widehat{E}$, and the former lies in the $1$-neighborhood of
the latter. Do the same to obtain $F'$ from $\widehat{F}$. Now $E\subseteq E'$,
$F\subseteq F'$, and $E',F'$ satisfy (C1) and (C3); they also satisfy
(C2) because $E,F$ satisfy (C2).

\subsubsection*{We can assume that $E,F$ are connected }

Condition (C) above follows from
\begin{quote}
(D) $X$ admits gluing along pairs $E,F$ that satisfy 
\begin{enumerate}
\item [D1.] $d(E,F)>\frac{1}{2}g-2$.
\item [D2.] $E,F$ are finite and connected.
\end{enumerate}
\end{quote}
For suppose that $E,F$ satisfy the conditions (C1)-(C3) and assume
that (D) holds. We show that every $x,y\in X$ admit a gluing along
$E,F$. We do so by induction on the total number of connected components
in $E,F$. When there are two components altogether this is exactly
(D). Now suppose the implication is known when there are up to $n$
components, and assume $E\cup F$ have $n+1$ components. Consider
the partial order where components $J,J'$ satisfy $J<J'$ if $J'$
separates $J$ from infinity. Let $J$ be a connected component of
$E\cup F$ that is minimal with respect to this order and observe
that all other components of $E\cup F$ are contained in the unbounded
component $J'$ of $\{u\in\mathbb{R}^{d}\mid d(u,J)>\frac{1}{2}g-2\}$

Since $d(E,F)>0$, either $J\subseteq E$ or $J\subseteq F$. Suppose
for instance that $J\subseteq F$, the other case is analogous.

If $J=F$, then we simply use (D) to glue $x,y$ along $J',F$, and
this is also a gluing along $E,F$, as required.

If $J\neq F$ then $F'=F\cap J'$ has fewer connected components than
$F$ and $E,F'$ still satisfies (C) so by the induction hypothesis
we can glue $x,y$ along $E,F'$ to obtain $z$. Now we apply (D)
to glue $z$ and $y$ along $J',J$, yielding a gluing of $x,y$ along
$E,F$. 

\subsubsection*{One more (not strictly necessary) reduction }

Condition (D) follows from
\begin{quote}
(E) $X$ admits gluing along pairs $E,F$ that satisfy
\begin{enumerate}
\item $F$ is finite and simply connected.
\item $E$ is the unbounded component of $\{u\in\mathbb{Z}^{2}\mid d(u,F)>\frac{1}{2}g-2\}$.
\end{enumerate}
\end{quote}
This reduction is not ~necessary for the later proofs, but it gives
a simpler situation to think about. To see that (E) implies (D), suppose
that $E,F$ are as stated in (D). 

If $F$ does not separate $E$ from infinity, fill in all the holes
in $F$ to obtain a finite simply connected set $F'$, let $E'$ denote
the unbounded component of $\{u\in\mathbb{Z}^{2}\mid d(u,F')>\frac{1}{2}g-2\}$,
and note that $E\subseteq E'$. Therefore by (E) we can glue along
$E',F'$ and this gives a gluing along $E,F$.

If $F$ does separate $E$ from infinity, swap the roles of $E,F$,
and proceed as above. 

\subsubsection*{Summary}

We have reduced the problem of proving SI with gap $g$ to proving
that condition (E) above holds. This is what we do: we begin with
$g=10$ and must prove condition (E), noting that $\frac{1}{2}g-2=3$. 

\subsection{Proof setup}

Let $x,y\in X$ with compatible with dense certificates $C_{x},C_{y}$
respectively. 

Let $E_{x},E_{y}\subseteq\mathbb{Z}^{2}$ be connected sets with $E_{y}$
finite and $E_{x}=\{u\in\mathbb{Z}^{2}\mid d(u,E_{y})>3\}$, so $d(E_{x},E_{y})>3$.
We introduce the ``filled in'' versions of the sets
\begin{align*}
\widehat{E}_{x} & =E_{x}+(-1,1)^{2}\\
\widehat{E}_{y} & =E_{y}+(-1,1)^{2}
\end{align*}
and
\[
\widehat{E}=\mathbb{R}^{2}\setminus(\widehat{E}_{x}\cup\widehat{E}_{y})
\]
By our assumptions, $\widehat{E}$ separates $E_{x}$ from $E_{y}$
and is at least $1$-far from each of them. We call $\widehat{E}$
the \textbf{gap}. We call $\widehat{E}_{x}$ the \textbf{zone} of
$E_{x}$ and $\widehat{E}_{y}$ the \textbf{zone }of $E_{y}$. For
a frame $F$ in one of the certificates $C_{x},C_{y}$, we write $C_{F}=C_{x}$
or $C_{y}$ depending on whether $F\in C_{x}$ to $F\in C_{y}$, and
similarly $E_{F}=E_{x}$ or $E_{y}$ and $\widehat{E}_{F}=\widehat{E}_{x}$
or $\widehat{E}_{y}$. We write $C'_{F}$ for the certificate that
$F$ does not belong to and similarly $E'_{F},\widehat{E}'_{F}$.
We use similar notation for boxes and witnesses. 

Let
\[
z=x|_{E_{x}}\cup y|_{E_{y}}
\]
This is a partially defined configuration in $A^{\mathbb{Z}^{2}}$.
Our objective is to extend $z$ to a configuration in $X$. This means
that we must do two things:
\begin{enumerate}
\item We must define the symbols $z_{u}$ for $u\in(E_{x}\cup E_{y})^{c}$.
\item We must define a dense certificate $C$ compatible with $z$, thus
showing that $z\in X$. 
\end{enumerate}
Both the extension of $z$ and the definition of $C$ will be carried
out in an iterative fashion.  We begin from $C=\emptyset$, which
is trivially compatible with $z$. Frames will then be added to $C$
one at a time in several (infinite) rounds. Initially, we add frames
from $C_{x}$ or $C_{y}$ that require little or no modification in
order to maintain the certificate properties and be compatible with
$z$. Afterwards, we add frames from $C_{x},C_{y}$ that require more
extensive changes, and finally we add entirely new frames, although
they will also be derived, after substantial changes, from frames
in $C_{x},C_{y}$. 

When a frame $F$ is added to $C$, we (partially) define any undefined
symbols in $z$ at witnesses $w\in F$, setting the appropriate component
of $z_{w}\in\{H,T\}^{\Theta\times\Sigma}$ so as to ensure compatibility.
Because we only add frames and witnesses that maintain the certificate
properties, by Corollary \ref{cor:witnesses-differ-in-position-direction-or-section}
we never will attempt to define any component of a symbol more than
once. 

Once a frame is added to $C$ and corresponding symbols defined in
$z$, they are never changed at later stages of the construction.

\subsection{Performing the gluing }

Observe that $\widehat{E}_{y}$ is bounded, and set 
\[
n_{0}=\min\{n\in\mathbb{N}\mid\diam\widehat{E}_{y}<\frac{1}{10}r_{n}\}
\]

\subsection*{Step~A: Small frames far from the boundary}
\begin{quote}
For each $1\leq n<n_{0}$ in turn, for every $n$-frame $F\in C_{x}\cup C_{y}$
whose center $u=c(F)$ satisfies $d(u,\mathbb{R}^{2}\setminus\widehat{E}_{F})>\frac{2}{3}r_{n}$,
add $F$ to $C$, subject to the modifications below.
\end{quote}
Let $F$ be an $n$-frame as above. We define a new frame $F'$ (the
modified version of $F$) with the same center and the same boxes
as $F$, but possibly different witnesses. To make the change, we
consider each witness $w\in F$ in turn, and replace it with $w'$,
as follows:
\begin{itemize}
\item Let $B\in F$ be the box containing $w$, so $w$ is safe relative
to $\mathcal{R}_{C_{F}}(B)$. 
\item By Proposition \ref{prop:safe-paths}, there is a polygonal path $\gamma\subseteq\safe(\mathcal{R}_{C_{F}}(B))$
containing $w$, with orientation differing from $\theta(B)$ by at
most $\alpha_{1}=\frac{h_{1}}{w_{1}}$. We know that $\gamma$ is
an infinite path passing through $w\in B$, and it does not cross
the long edges of $B$ (because $\mathcal{R}_{C_{F}}(C)$ contains
rectangles whose sides are these same long edges). So, replacing $\gamma$
by $\gamma\cap B$, we can assume that $\gamma\subseteq B$ and that
it connects the short ends of $B$.
\item We define the point $w'$ differently in each of the following cases:
\begin{enumerate}
\item If $\gamma\cap\widehat{E}=\emptyset$, we set $w'=w$ (i.e. we do
not ``move'' $w$ at all).
\item Otherwise, choose $w'$ to be the point in $\gamma\cap\widehat{E}$
that is closest to the center of $F$. Such a point exists because
$\widehat{E}$ and $\gamma$ are compact. 
\end{enumerate}
\end{itemize}
Having added a $w'$ to each box $B\in F'$ we claim now that $C\cup\{F'\}$
is a certificate. Since the boxes in $F'$ come from those of $F$
they satisfy all the relevant conditions from the definition. It remains
to check two things:
\begin{itemize}
\item The center of $F'$ (equivalently, of $F$) is $r_{n}/2$-far from
the centers of other $n$-frames $G$ in $C$. There are two cases:

If $G\in C_{F}$, then $F,G$ are $n$-frames in the same certificate,
hence they are $r_{n}/2$ separated.

Otherwise, $G$ is from the other certificate than $F$. In this case,
each of the centers of $F,G$ lies each in its own zone, and each
is at least $\frac{2}{3}r_{n}$ from the complement of its zone (because
$F,G$ were added to $C$ in the present step, Step A, and this was
the condition for being added). Thus, the line segment connecting
$c(F),c(G)$ contains a segment of length $\frac{2}{3}r_{n}$ in each
of the two zones, hence its total length is greater than $\frac{4}{3}r_{n}$,
and certainly more than $\frac{1}{2}$$r_{n}$.
\item Every witness from a box $B'\in C\cup\{F'\}$ is safe with respect
to $\mathcal{R}_{C\cup\{F'\}}(B')$.

Since the addition of $F'$ to $C$ did not introduce any new boxes
at levels below $n$ we have not affected the safety of witnesses
from any level $\leq n$ in $C$, and $C$ does not yet contain any
frames at higher levels. So we only need to verify that the witnesses
$w'$ of $F'$ are safe.

Consider a witness $w'\in F'$ belonging to the section $\sigma=\sigma(w')$
of an $n$-box $B'\in F'$. Let $w,\gamma$ be as defined above when
$w'$ was added.

The family $\mathcal{R}=\mathcal{R}_{C\cup\{F'\}}(B')$ is derived
from sections of boxes added to $C$ from the certificate $C_{F}$
of $F$, and those added from the other certificate. 

Since all points in $\gamma$ are safe relative to $\mathcal{R}_{C_{F}}(B')$,
and since $w'\in\gamma$, we know that $w'$ is safe relative to $\mathcal{R}_{C_{F}}$. 

Thus, by part (1) of Proposition \ref{prop:safe-paths}, it suffices
to show that for $k\leq n$, if $S\in\mathcal{R}_{C\cup\{F'\}}(B')\setminus\mathcal{R}_{C_{F}}(B')$,
then $w'\notin20S$. 

The case $k=n$ follows directly from Lemma \ref{lem:separation-of-boxes-from-different-frames-in-same-level},
because the fact that $F',F''$ come from different certificates and
their centers belong to the corresponding zones and are at least $\frac{2}{3}r_{n}$
far from the complementary zone, means that their centers are more
than $\frac{1}{2}r_{n}$ apart, as required by the lemma.

Now suppose that $1\leq k<n$ and that there exists a level-$k$ box
$B''\in C\cup\{F'\}$ belonging to a frame $F''\in C\cup\{F'\}$,
and a section $S=(B'')^{(i)}$, such that $S\in\mathcal{R}_{C\cup\{F'\}}(B')\setminus\mathcal{R}_{C_{F}}(B')$
and $w'\in20S$. 
\begin{claim}
$d(w',\widehat{E}_{F''})<\frac{1}{6}r_{n}$
\end{claim}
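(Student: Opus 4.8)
The plan is to exploit the two facts we know about $B''$: first, that $S=(B'')^{(i)}$ lies in $\mathcal R_{C\cup\{F'\}}(B')\setminus\mathcal R_{C_F}(B')$, and second, that $w'\in20S$. The first fact tells us $B''$ was added to $C$ from the \emph{other} certificate $C'_F$ (if it had come from $C_F$, then since $\theta(B'')=\theta(B')$ and $2S$ meets the corresponding section of $B'$, $S$ would already have been placed in $\mathcal R_{C_F}(B')$, contradicting $S\notin\mathcal R_{C_F}(B')$). So $B''$ belongs to a frame $F''$ that was added in Step~A from $C'_F$, and the Step~A criterion for admission guarantees $d(c(F''),\mathbb R^2\setminus\widehat E_{F''})>\tfrac23 r_k$; in particular the entire frame $F''$, being an $r_k$-disc about $c(F'')$, together with a $\tfrac23 r_k$-collar around it, lies inside $\widehat E_{F''}$.

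Next I would locate $w'$ relative to that collar. Since $S$ is a section of a $k$-box $B''\subseteq F''$, every point of $S$ is within $r_k$ of $c(F'')$, and hence $20S$ is contained in a ball of radius $O(w_k)$ about $c(F'')$ (using $h_k\le w_k$ and that a section has width $w_k/1000$, length $h_k$); because $k<n$ and the parameters satisfy $w_n\gg w_k$, this ball has radius $\ll r_n$, indeed much smaller than $\tfrac23 r_k$ once one is careful, but in any case $\le\tfrac16 r_n$ for the relevant range. Since $w'\in20S$, we get $d(w',c(F''))$ is small on the scale of $r_n$, and in fact $w'$ lies well inside the collar $d(\cdot,\mathbb R^2\setminus\widehat E_{F''})>\tfrac23 r_k$ established above. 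Therefore $w'\in\widehat E_{F''}$, which gives $d(w',\widehat E_{F''})=0<\tfrac16 r_n$ trivially — or, if one prefers to phrase the claim as a genuine positive distance to the complement rather than to the set itself, the same collar estimate gives $d(w',\mathbb R^2\setminus\widehat E_{F''})>\tfrac23 r_k$, and one converts this into the stated $\tfrac16 r_n$ bound using $r_k$ versus $r_n$ with the scale separation; I would check which reading of ``$\widehat E_{F''}$'' is intended and insert the one-line computation accordingly.

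The main obstacle I anticipate is bookkeeping the nested scales correctly: one must confirm that $20S$ really is small compared with the $\tfrac23 r_k$ collar (so that $w'$ cannot escape $\widehat E_{F''}$), and this needs the precise shape of a section ($\tfrac{w_k}{1000}\times h_k$, placed almost-radially inside the $r_k$-disc $F''$) together with $h_k\le w_k=\tfrac9{10}r_k$. A secondary subtlety is justifying that $B''$ must come from $C'_F$ rather than $C_F$: this is exactly the dichotomy used in the proof of Lemma~\ref{lem:separation-of-witnesses-with-identical-orientations-and-section}, namely that membership of $S$ in $\mathcal R_{C_F}(B')$ is decided purely by the geometric condition $\theta(B'')=\theta(B')$ and $2S\cap(B')^{(i)}\neq\emptyset$, so if that condition held for a box from $C_F$ the section would be in $\mathcal R_{C_F}(B')$. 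Once these two points are nailed down, the claim follows by a direct distance estimate.
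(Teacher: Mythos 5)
Your overall route is the same as the paper's: identify that $B''$ must come from the certificate opposite to $C_F$ (you justify this explicitly, which the paper only states), so its frame $F''$ was admitted in Step~A into the \emph{other} zone $\widehat{E}_{F''}$, and then estimate the distance from $w'$ to that zone via $F''$. One clarification first: the claim is about the distance from $w'$ to the set $\widehat{E}_{F''}$ itself (not to its complement); it is used in the next claim to deduce $d(w',c(F))>\frac{2}{3}r_n-\frac{1}{6}r_n=\frac{1}{2}r_n$, since $\widehat{E}_{F''}\subseteq\mathbb{R}^2\setminus\widehat{E}_F$.

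There is, however, a concrete error in your quantitative step. The Step~A admission criterion is $d(c(F''),\mathbb{R}^2\setminus\widehat{E}_{F''})>\frac{2}{3}r_k$, i.e.\ only the ball of radius $\frac{2}{3}r_k$ \emph{about the center} is guaranteed to lie in the zone --- not ``the entire frame together with a $\frac{2}{3}r_k$-collar,'' as you assert. Since $B''$ is almost radial it touches $\partial F''$, so the section $S$ (and hence $w'\in 20S$) can sit at distance nearly $r_k=\frac{10}{9}w_k>\frac{2}{3}r_k$ from $c(F'')$, outside the guaranteed region. Your intermediate conclusions ``$w'\in\widehat{E}_{F''}$'' and ``$d(w',\mathbb{R}^2\setminus\widehat{E}_{F''})>\frac{2}{3}r_k$'' are therefore not justified (and need not hold). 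Fortunately the claim only needs the weaker bound, which the correct part of your computation already gives: $c(F'')\in\widehat{E}_{F''}$ and $d(w',c(F''))\le d(w',S)+\bigl(r_k\bigr)\le 20\diam S+r_k=O(w_k)\ll r_n$, since $k<n$. This is essentially the paper's argument, which bounds $d(S,\widehat{E}_{F''})\le w_k$ (using that the almost-radial box $B''$ reaches into the inner part of $F''$, which does lie in the zone) and $d(w',S)\le w_k$, giving $d(w',\widehat{E}_{F''})\le 2w_k\ll r_n$. So the statement survives, but you should replace the containment argument by this distance estimate.
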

\begin{proof}
[Proof of the Claim] Observe that
\begin{itemize}
\item Since $F''$ was added to $C$ in Step A, it intersects $\widehat{E}_{F''}$
(its own zone, which is different from that of $F'$). 

In particular, $d(S,\widehat{E}_{F''})\leq w_{k}$.
\item $S$ has dimensions $\frac{w_{k}}{1000}\times h_{k}$ and $h_{k}\ll w_{k}$.

In particular, since $w'\in20S$, we have $d(w',S)\leq w_{k}$.
\end{itemize}
Therefore, 
\[
d(w',\widehat{E}_{F''})\leq d(w',S)+d(S,\widehat{E}_{F''})\leq2w_{k}\ll r_{n}
\]
as claimed.

\end{proof}
\begin{claim}
When $\gamma$ is traversed starting from $w'$ towards the center
of $F$, we eventually enter $\widehat{E}_{F''}$.
\end{claim}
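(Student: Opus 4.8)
The plan is to argue by contradiction, keeping track of which of the two zones $\widehat{E}_x,\widehat{E}_y$ the path $\gamma$ runs through as it leaves $w'$ toward $c(F)$, and then using the previous Claim to force $w'$ to sit very close to $\widehat{E}_{F''}$. So, suppose that traversing $\gamma$ from $w'$ toward $c(F)$ one never enters $\widehat{E}_{F''}$. First I would pin down $w'$: we may assume we are in the second case of the construction of $w'$, so $w'\in\gamma\cap\widehat{E}$ and $w'$ is the point of $\gamma\cap\widehat{E}$ nearest to $c(F)$, while $\gamma\subseteq B'$ joins the two short sides of $B'$. (If instead $\gamma\cap\widehat{E}=\varnothing$, then $\gamma$ is connected, lies in $\widehat{E}_x\cup\widehat{E}_y$, and meets the inner short side of $B'$; that short side lies in $\widehat{E}_F$, since $c(F)$ is $\frac{2}{3}r_n$-deep in $\widehat{E}_F$ and $B'$ is almost radial relative to $F$, so $\gamma\subseteq\widehat{E}_F$, a situation that is absorbed by the argument below.)

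Next I would show that the inward traversal leaves $\widehat{E}$ at once. Writing $\gamma\cap B'$ as a graph over the long axis of $B'$ and using $h_n\ll w_n$ together with the almost-radiality of $B'$ — so $c(F)$ lies within $\ll r_n$ of that axis and $\ll r_n$ beyond the inner short side — one checks that moving along $\gamma$ toward $c(F)$ strictly decreases the distance to $c(F)$. By the minimality defining $w'$, the portion of $\gamma$ swept out from $w'$ toward $c(F)$ is, apart from $w'$ itself, disjoint from $\widehat{E}$; hence it lies in $\widehat{E}_x\cup\widehat{E}_y$ and, being connected, in exactly one of these two disjoint zones. Under the contrary assumption that zone is $\widehat{E}_F$, so $w'$ lies in the closure of $\widehat{E}_F$.

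Finally I would clash this with the previous Claim. Because $w'\in\widehat{E}$ while the disk $\{p:d(p,c(F))\le\frac{2}{3}r_n\}$ is contained in $\widehat{E}_F$ (hence disjoint from $\widehat{E}$), we have $\frac{2}{3}r_n\le d(w',c(F))\le r_n$, so $w'$ sits in the outer part of the almost-radial box $B'$. The previous Claim supplies a point of $\widehat{E}_{F''}$ within $\frac{1}{6}r_n$ of $w'$; since $\widehat{E}_{F''}$ is disjoint from $\widehat{E}_F$ and stays at distance $>\frac{2}{3}r_n$ from $c(F)$, comparing this $\frac{1}{6}r_n$ proximity against the relevant scales forces $\gamma$, traversed from $w'$ toward $c(F)$, to cross into $\widehat{E}_{F''}$ before it could pass back through $\widehat{E}$ into $\widehat{E}_F$ (any re-entry of $\gamma$ into $\widehat{E}$ on that side of $w'$ would occur at distance $<d(w',c(F))$ from $c(F)$, which is impossible by the choice of $w'$). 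This contradicts the contrary assumption, proving the Claim.

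The step I expect to be the real obstacle is the last one: making precise exactly what ``toward $c(F)$'' means along the merely nearly-radial path $\gamma$, and carrying out the quantitative comparison of the $\frac{1}{6}r_n$ proximity from the previous Claim against the distance of $\widehat{E}_{F''}$ from $c(F)$ and the box dimensions $w_n\times h_n$, all inside the standing hierarchy $h_n\ll w_n\le r_n$. The monotonicity-of-distance bookkeeping in the second step and the zone-connectedness used throughout are routine once the scales are set up correctly.
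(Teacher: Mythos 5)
There is a genuine gap, and it sits exactly where you flagged it: the final ``quantitative comparison'' is not carried out, and without it the contradiction never materializes. Your setup reduces everything to the assertion that, because $d(w',\widehat{E}_{F''})<\tfrac{1}{6}r_{n}$ (the previous Claim) and $\widehat{E}_{F''}$ is disjoint from $\widehat{E}_{F}$, the path $\gamma$ must ``cross into $\widehat{E}_{F''}$'' as it heads toward $c(F)$. But proximity of the single point $w'$ to the set $\widehat{E}_{F''}$ says nothing about whether the specific curve $\gamma$ meets that set: the nearby piece of $\widehat{E}_{F''}$ could lie off to the side of $\gamma$'s direction of travel, and $\gamma$ could simply skirt it. Indeed a path starting within $\tfrac{1}{6}r_{n}$ of the opposite zone and then running entirely through $\widehat{E}_{F}$ is not contradictory at all --- note also that $\tfrac{1}{6}r_{n}$ is enormous compared to the scale of $F''$, whose whole diameter is $2r_{k}\ll r_{n}$, so this bound is far too coarse to localize anything at level $k$.

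The missing idea is the directional one that the paper's proof is built on. Since $S\in\mathcal{R}_{C\cup\{F'\}}(B')$ one has $\theta(B'')=\theta(B')$, so $\gamma$ runs within slope $h_{1}/w_{1}$ of the long axis of $B''$; since $w'\in20S$, the starting point $w'$ lies within $20h_{k}$ of the line extending that axis; and since $B''$ is almost radial in $F''$, that line passes within $\tfrac{1}{100}r_{k}$ of $c(F'')$. Hence the inward portion of $\gamma$ passes within roughly $\tfrac{1}{100}r_{k}+20h_{k}+\tfrac{1}{1000}r_{k}<\tfrac{1}{10}r_{k}$ of $c(F'')$, provided it is long enough --- and the previous Claim is used for exactly this (to get $d(w',c(F))>\tfrac{2}{3}r_{n}-\tfrac{1}{6}r_{n}=\tfrac{1}{2}r_{n}$, hence a remaining length $\geq\tfrac{1}{4}r_{n}\gg2r_{k}$), not to assert nearness to $\widehat{E}_{F''}$. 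Finally, because $F''$ was added in Step~A, every point within $\tfrac{1}{10}r_{k}$ of $c(F'')$ lies in $\widehat{E}_{F''}$, which is what actually forces the entry. A secondary issue: your bound $\tfrac{2}{3}r_{n}\leq d(w',c(F))$ relies on $w'\in\widehat{E}$, which holds only in case (2) of the construction of $w'$; the Claim must also cover case (1) (where $w'=w$ and $\gamma\cap\widehat{E}=\emptyset$), since that is precisely the case the surrounding argument rules out by combining this Claim with the separation property of $\widehat{E}$.
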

\begin{proof}
[Proof of the Claim] Since we have assumed that $d(c(F),\mathbb{R}^{2}\setminus\widehat{E}_{F})>\frac{2}{3}r_{n}$,
the previous claim implies that $d(w',c(F))>\frac{2}{3}r_{n}-\frac{1}{6}r_{n}=\frac{1}{2}r_{n}$.
Since $w_{n}=\frac{9}{10}r_{n}$ we conclude that the part of $\gamma$
extending from $w'$ towards $c(F)$ is at least $\frac{1}{4}r_{n}\gg2r_{k}$
long. Next, observe that
\begin{itemize}
\item Since $S\in\mathcal{R}_{C}(B'')$, we know that $\theta(B'')=\theta(B')$ 
\item When we travel along $\gamma$ from $w'$ towards the center of $F'$,
the slope of $\gamma$ relative to the direction $\theta(F')$ is
at most $h_{1}/w_{1}$, which may be assumed less than $1/1000$.
Thus, the direction of $\gamma$ deviates from $\theta(F')=\theta(F'')$
at most $1$ unit per $1000$. 
\item $B''$ is almost radial, so when the long edges are extended in direction
$\theta$ the ray passes within $\frac{1}{100}r_{k}$ of the center
of $F''$. 
\item $w'\in20S$, so the distance from $w'$ to the one of the lines extending
the long edges of $B''$ is at most $20h_{k}$.
\end{itemize}
Thus, we can travel along $\gamma$ from $w'$ towards $c(F)$, we
will pass within $\frac{1}{100}r_{k}+20h_{k}+\frac{1}{1000}r_{k}$
of the center of $F''$, and this distance can be assumed smaller
than $\frac{1}{10}r_{k}$. But every point within $\frac{1}{10}r_{k}$
of the center of $F''$ lies in the zone $\widehat{E}_{F''}$, because
if $F''$ was added to $C$ in step A. This proves the claim.
\end{proof}
To conclude, as we move along $\gamma$, we will certainly enter the
zone $\widehat{E}_{F''}$. Because $\widehat{E}$ is a connected set
separating the zones, this means that we also pass through $\widehat{E}$.
This shows that we are not in case (1) of Step A, so we are in case
(2). But the new point that we have found in $\gamma\cap\widehat{E}$
is closer to the center of $F'$ than $w'$ was, contradicting the
definition of $w'$ in (2).
\end{itemize}
We have shown that $C\cup\{F'\}$ is a certificate.

It remains to update $z$ at any witness $w'\in F'$ that differs
from the corresponding witness in $F$. Such witnesses lie in $\widehat{E}$.
We have already explained why in this case, $\pi_{(\theta(w),\sigma(w))}(z_{w'})$
is not yet specified. To determine its value, we imagine witnesses
moved one at a time and set $\pi_{(\theta(w),\sigma(w))}(z_{w'})$
to $H$ or $T$ according to the strategy in the coin-and-bucket game,
started from $CBC(x,F)$ if $F\in C_{x}$ or from $CBC(y,F)$ if $F\in C_{y}$. 

\subsection*{Step~B: Large frames}
\begin{quote}
For each $n\geq n_{0}$ in turn, add all $n$-frames $F\in C_{x}$
to $C$, subject to the modifications below.

(Note that we add only frames from $C_{x}$, not $C_{y}$. This is
where finiteness of $E_{y}$ plays a role).
\end{quote}
Let $F$ be an $n$-frame as above. We proceed as in Step A: we define
a new frame $F'$ (the modified version of $F$) with the same center
and the same boxes as $F$, and for each witness $w\in F$ we define
a witness $w'\in F'$ using the same procedure as in Step A, using
the same path $\gamma$ defined there. Having done this to all witnesses
we we add $F'$ to $C$.

We must verify that $C\cup\{F'\}$ is a certificate. We have added
frames at different levels than step A and all frames and boxes we
added come from $C_{x}$, so separation of boxes and frames is inherited
from $C_{x}$. We also can not have disrupted the safety property
of existing witnesses from Step A because we are adding only higher
levels than before. What we must check is that the new witnesses added
in the present stage satisfy the safety property.

So let $w'\in F'$ be a witness derived in this stage from a witness
$w\in F$ belonging to section $\sigma$ of a box $B\in F$. As explained
in Step A, $w'$ is safe with respect to $\mathcal{R}_{C_{x}}(B)$.
To show that it is safe relative to $\mathcal{R}_{C\cup\{F'\}}(B)$,
we must show that if $k<n$ and if $S\in\mathcal{R}_{C\cup\{F'\}}(B)\setminus\mathcal{R}_{C_{x}}(B)$
is the $i$-th section of a $k$-box $B''\in C_{y}$ for some $k<n$,
then $w'\notin20S$.

Suppose the contrary and let $k,B'',i,S$ is as above but $w'\in20S$.
We know that $B''\in C_{y}\cap C$, and this occurs only if $k<n_{0}$
and $B''$ belongs to a frame added during Step A.
\begin{claim}
$w'\notin\widehat{E}_{y}$.
\end{claim}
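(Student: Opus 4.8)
The plan is to derive a contradiction from the supposed geometry. We are in Step B, so $F$ is an $n$-frame of $C_x$ with $n \geq n_0$, and $w'$ was obtained from $w \in F$ by following the path $\gamma$ of Proposition~\ref{prop:safe-paths} until (in case (2)) reaching the point of $\gamma \cap \widehat{E}$ closest to $c(F)$, or (in case (1)) staying put at $w' = w$. We have assumed that $w' \in 20S$ for some section $S = (B'')^{(i)}$ of a $k$-box $B'' \in C_y$ with $k < n_0$, belonging to a frame $F''$ added in Step~A. The point is that $B''$ being in $C_y$ forces it to live inside $\widehat{E}_y$, while the defining property of $w'$ — if it lay in $\widehat{E}_y$ — would contradict the minimality clause that placed it on the boundary $\widehat{E}$ coming out of the $x$-side.

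First I would locate $w'$ relative to $\widehat{E}_y$ quantitatively. Since $F'' $ was added in Step~A, its center is at least $\tfrac{2}{3}r_k$ from $\mathbb{R}^2 \setminus \widehat{E}_{F''} = \mathbb{R}^2 \setminus \widehat{E}_y$, and $B''$ is almost radial in $F''$, so $B''$ (hence $S$) lies well inside $\widehat{E}_y$; concretely $d(S, \mathbb{R}^2 \setminus \widehat{E}_y) \gtrsim \tfrac{1}{2}r_k$ or at least is $\geq r_k$. Since $S$ has dimensions $\tfrac{w_k}{1000}\times h_k$ with $h_k \ll w_k$, and $w' \in 20S$, we get $d(w', S) \leq 20 h_k \ll r_k$. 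Combining, $w'$ lies strictly inside $\widehat{E}_y$, in fact $d(w', \mathbb{R}^2\setminus\widehat{E}_y) \gg h_k$ — and certainly $w' \in \widehat{E}_y$, which is what we are assuming for contradiction; the quantitative version says $w'$ is comfortably interior.

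Next I would run the same argument as in the second Claim of Step~A: travel along $\gamma$ from $w'$ back towards $c(F)$. Because $\gamma$ has slope at most $h_1/w_1 < 1/1000$ relative to $\theta(F) = \theta(B'') $ (the latter equality because $S \in \mathcal{R}_C(B)$ forces $\theta(B'') = \theta(B) = \theta(F')$), and because $B''$ is almost radial so its extended long edges pass within $\tfrac{1}{100}r_k$ of $c(F'')$, and since $w' \in 20S$ is within $20h_k$ of those extended edges, the path $\gamma$ passes within $\tfrac{1}{100}r_k + 20h_k + \tfrac{1}{1000}r_k < \tfrac{1}{10}r_k$ of $c(F'')$ — provided $\gamma$ is long enough on the $c(F)$ side, which it is since it spans a box of width $w_n = \tfrac{9}{10}r_n \gg 2r_k$ (as $n \geq n_0 > k$). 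Every point within $\tfrac{1}{10}r_k$ of $c(F'')$ lies in $\widehat{E}_y$ (again because $F''$ was added in Step~A). So $\gamma$, starting at $w' \in \widehat{E}_y$, remains near enough to pass through the deep interior of $\widehat{E}_y$: in particular the portion of $\gamma$ from $w'$ towards $c(F)$ stays in $\widehat{E}_y$ up to that near-center point and does not cross $\widehat{E}$ before then.

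Now the contradiction. The path $\gamma$ lives in $B \subseteq F \in C_x$, so its far end (near $c(F)$, or rather its continuation) must eventually leave $\widehat{E}_y$; since $\widehat{E}$ is a connected set separating the zones $\widehat{E}_x, \widehat{E}_y$, the path $\gamma$ emanating from $w'$ towards $c(F)$ must cross $\widehat{E}$ at some point $q$, and by the above $q$ is encountered \emph{after} passing near $c(F'')$, i.e. $q$ is farther from $c(F)$ than the near-center point, hence $d(q, c(F)) > $ something, while $w'$ itself, being in $\widehat{E}_y$'s interior, is \emph{not} in $\widehat{E}$. This means that when $w'$ was chosen in Step~B we were not in case (1) ($\gamma \cap \widehat{E} \neq \emptyset$), so we were in case (2), and $w'$ was defined to be the point of $\gamma \cap \widehat{E}$ \emph{closest to $c(F)$}. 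But we just exhibited $q \in \gamma \cap \widehat{E}$ with $q$ between $w'$ and $c(F)$ along $\gamma$ — hence closer to $c(F)$ than $w'$ — unless $w' \in \widehat{E}$, contradicting $w' \in \widehat{E}_y^{\circ}$. (If instead we were in case (1), then $w' = w \in B \cap \gamma$ with $\gamma \cap \widehat{E} = \emptyset$, but $\gamma$ crosses $\widehat{E}$ at $q$, contradiction outright.) Either way we reach a contradiction, so $w' \notin \widehat{E}_y$.

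The main obstacle I expect is bookkeeping the direction of travel along $\gamma$: one must be careful that the segment of $\gamma$ on which the near-passage to $c(F'')$ occurs is on the $c(F)$-side of $w'$ (not the other side), and that it is long enough — both follow from $w' \in B$ together with $n \geq n_0 \gg k$, but the estimate $\tfrac14 r_n \gg 2r_k$ needs the parameter hierarchy $w_{n+1} \gg w_n$ and $n_0 > k$, exactly as in the Step~A claims. The rest is the same slope/almost-radial geometry already used twice above, applied verbatim.
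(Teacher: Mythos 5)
Your endgame is the right one --- if $w'\in\widehat{E}_y$ then $w'\notin\widehat{E}$, so case (1) applies and $\gamma\cap\widehat{E}=\emptyset$; but $\gamma$ must exit $\widehat{E}_y$, and since $\widehat{E}$ separates the zones it would have to cross $\widehat{E}$, a contradiction. However, the one step that carries all the weight --- \emph{why} $\gamma$ must leave $\widehat{E}_y$ --- is justified incorrectly. You write that ``$\gamma$ lives in $B\subseteq F\in C_x$, so its far end must eventually leave $\widehat{E}_y$.'' Membership of $F$ in the certificate $C_x$ says nothing about where $F$ sits geometrically; frames of $C_x$ can and do overlap $\widehat{E}_y$. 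The actual reason, and the entire content of the claim, is a diameter comparison that you never make: by the definition of $n_0$ we have $\diam\widehat{E}_y<\frac{1}{10}r_{n_0}\leq\frac{1}{10}r_n$ (this is exactly where $n\geq n_0$, i.e.\ being in Step B rather than Step A, is used), whereas $\gamma$ connects the two short sides of $B$ and so has diameter at least $w_n=\frac{9}{10}r_n$. Hence $\gamma\not\subseteq\widehat{E}_y$, and the paper's proof is just these three lines. Without that comparison your argument does not close: the near-passage to $c(F'')$ only shows $\gamma$ goes \emph{deeper into} $\widehat{E}_y$, not that it ever comes out.

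The long detour through $S$, $B''$ and $F''$ is both unnecessary for this claim and partly wrong. You assert that $B''\in C_y$ ``forces it to live inside $\widehat{E}_y$'' and that $d(S,\mathbb{R}^2\setminus\widehat{E}_y)\gtrsim\frac{1}{2}r_k$. But $B''$ is almost radial, so it stretches from near $c(F'')$ all the way to $\partial F''$, while Step A only guarantees $d(c(F''),\mathbb{R}^2\setminus\widehat{E}_y)>\frac{2}{3}r_k$; the outer sections of $B''$ can lie outside $\widehat{E}_y$ entirely (indeed, the case analysis following this claim shows $d(S,c(F''))>\frac{1}{2}r_k$, i.e.\ $S$ is such an outer section). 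So you cannot place $w'$ ``comfortably interior'' to $\widehat{E}_y$ from $w'\in 20S$. Since the claim is proved by contradiction anyway this error is not fatal, but it should be cut; the slope/almost-radial machinery belongs to the later cases (1) and (2) of Step B, not here.
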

\begin{proof}
[Proof of the Claim] By our choice of $w'$, if $w'\notin\widehat{E}$
then $\gamma$ does not intersect $\widehat{E}$ at all. So, if $w'\in\widehat{E}_{y}$
then $w'\notin\widehat{E}$, and, since $\widehat{E}$ separates $\widehat{E}_{x},\widehat{E}_{y}$,
we must also have $\gamma\cap\widehat{E}_{x}=\emptyset$. It follows
that if $w'\in\widehat{E}_{y}$, then $\gamma\subseteq\widehat{E}_{y}$.
On the other hand $w'$ is an $n$-witness for $n\geq n_{0}$, so
the diameter of $\widehat{E}$ (and hence of $\widehat{E}\cup\widehat{E}_{y}$)
is less than $\frac{1}{10}r_{n}$, while the diameter of $\gamma$
is at least $w_{n}=\frac{9}{10}r_{n}$, so $\gamma\subseteq\widehat{E}_{y}$
is impossible, and consequently so is $w'\in\widehat{E}_{y}$.
\end{proof}
\begin{claim}
$|i-\sigma|\leq1$ (recall that $i-\sigma(S)$ and $\sigma=\sigma(w')$).
\end{claim}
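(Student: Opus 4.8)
The setup is: $w'$ is an $n$-witness in section $\sigma = \sigma(w')$ of a box $B \in F'$, while $S = (B'')^{(i)}$ is the $i$-th section of a $k$-box $B'' \in C_y$ with $k < n_0 \le n$, and we are assuming $w' \in 20S$ and (from the previous claim) $w' \notin \widehat{E}_y$. I want to show the section indices are nearly equal. The plan is to exploit the fact that both $B$ and $B''$ are almost radial relative to their respective frames $F'$ and $F''$, both with the same orientation $\theta = \theta(B) = \theta(B'') \in \Theta$ (the latter equality is forced because $S \in \mathcal{R}_C(B)$ requires matching orientations). Since section number in an almost-radial box is (up to a controlled error coming from the $\tfrac{1}{100}r$ offset allowed in the definition of ``almost radial'') a monotone function of the distance from the relevant frame center along direction $\theta$, the index $i$ of $S$ is essentially determined by the distance of $S$ from $c(F'')$ measured along $\theta$, and $\sigma$ is determined by the distance of $w'$ from $c(F')$ along $\theta$.

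The key geometric input is that $c(F')$ and $c(F'')$ are very close \emph{in direction $\theta$}, up to an error that is negligible compared with a section width $w_k/1000$. I would argue this as follows. The box $B''$ meets $\widehat{E}_y$ (this is why $F''$ was added in Step A — its center is $\tfrac{2}{3}r_k$-deep inside its zone and hence $B''$, being almost-radial and touching $\partial F''$, reaches out toward $\partial\widehat{E}_y$; more simply, $d(S,\widehat{E}_y) \le w_k$ as already used). Also $w' \in 20S$ gives $d(w', S) \le 20 h_k \ll w_k$, and $w' \notin \widehat{E}_y$ but $w'$ lies within $O(w_k)$ of $\widehat{E}_y$. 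Meanwhile the frame $F'$ containing $w'$: since $w'$ is an $n$-witness with $n \ge n_0$ and $\operatorname{diam}\widehat{E}_y < \tfrac{1}{10}r_{n_0} \le \tfrac{1}{10}r_n$, while $w'$ is within $O(w_k)$ of $\widehat{E}_y$ and inside $F'$ (an $r_n$-disk), the center $c(F')$ is within $r_n$ of $\widehat{E}_y$, and more to the point, $|w' - c(F')| \le r_n$. The upshot is that, projected onto direction $\theta$, the points $w'$, $S$, the relevant part of $\widehat{E}_y$, and $c(F'')$ all lie within $O(w_k)$ of one another, whereas $c(F')$ sits a distance between $0$ and $r_n$ from $w'$ along $\theta$. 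But $\sigma$ is read off from the projected distance $|{\rm proj}_\theta(w' - c(F'))|$ in units of $w_n/1000$, and $i$ from $|{\rm proj}_\theta(S - c(F''))|$ in units of $w_k/1000$ — these are different scales, so I must instead compare the \emph{physical} position of $w'$ within $B''$ to the physical position of $w'$ within $B$.

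So the cleaner route: since $w' \in 20S$ and sections of $B''$ have dimensions $\tfrac{w_k}{1000}\times h_k$, the position of $w'$ along the long axis of $B''$ is pinned down to within $O(w_k)$ of the location of section $i$. Now I use that $w'$ actually lies \emph{in} section $\sigma$ of $B$ (its own box), which is a $\tfrac{w_n}{1000}\times h_n$ rectangle with $w_n \gg w_k$; thus the constraint ``$w'$ is in section $\sigma$ of $B$'' is much weaker than ``$w'$ is within $20S$ of section $i$ of $B''$''. The honest content of the claim must therefore come from comparing $i$ with $i$-of-the-previous-claim, i.e.\ the statement is really about two sections of the \emph{same} box $B''$, not about $B$ at all — re-reading, $\sigma(S) = i$ and the claim compares $\sigma(w')$ to $\sigma(S)$. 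Given $w' \in 20S$ where $S$ is the $i$-th section of $B''$, and $w' \in B$ lies in its own $\sigma$-th section: the point is that $w'$ is close to $S$ hence close to $B''$, and $w' \in B$, both almost radial with the same orientation and (as one shows) nearly-coincident radial lines; walking along the common direction $\theta$, the two section-numberings must agree to within $1$. I would make this precise by fixing the line $\ell_\theta$ through $c(F'')$ in direction $\theta$ approximating the long axis of $B''$ (within $\tfrac1{100}r_k$), noting $w'$ is within $20h_k \ll w_k/1000$ of $\ell_\theta$, so ${\rm proj}_{\theta}(w')$ determines $i$ to within $\pm 1$; and the identical argument with $F'$, $B$ shows ${\rm proj}_\theta(w')$ determines $\sigma$ to within $\pm 1$ \emph{on the scale $w_n$} — but since $w_n$ is larger, the $\sigma$-section occupied by $w'$ is a coarsening, and one checks it cannot shift $i$ by more than $1$. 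The main obstacle is bookkeeping the two different length scales $w_k$ and $w_n$ and confirming the $\tfrac1{100}r$ ``almost radial'' slack plus the $20h$ slack from $w'\in 20S$ plus the $\alpha_1$-slope slack from $\gamma$ all stay below one section width $w_k/1000$; all of these are $\ll w_k$ by the parameter hypotheses $h_n \gg w_{n-1}, h_{n-1}$ and $w_n \gg h_n$, so the estimate closes, but the inequality chain must be assembled carefully.
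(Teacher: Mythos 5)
There is a genuine gap. The claim is not a geometric fact about two almost-radial boxes with a common orientation; it is a direct consequence of the \emph{definition} of $\mathcal{R}_{C}(B)$, which you never invoke in the relevant form. Recall that a section $S=(B'')^{(i)}$ of a lower-level box $B''$ is admitted into $\mathcal{R}_{C}(B)$ only if $2(B'')^{(i)}\cap B^{(i)}\neq\emptyset$ --- that is, only if it meets the section of $B$ carrying the \emph{same index} $i$. The paper's proof is then one line: $S$ is within $\diam(2S)$ of $B^{(i)}$, and $w'\in20S\cap B^{(\sigma)}$ puts $B^{(\sigma)}$ within roughly $\frac{40}{1000}w_{k}+h_{k}+\frac{1}{1000}w_{k}\ll w_{n}/1000$ of $B^{(i)}$; since non-adjacent sections of $B$ are at least $\frac{1}{1000}w_{n}$ apart, $|i-\sigma|\leq1$. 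You extract from $S\in\mathcal{R}_{C}(B)$ only the orientation condition $\theta(B'')=\theta(B)$, and then try to recover the index comparison from the geometry of the two frames.

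That geometric route cannot close, for the reason you yourself half-identify mid-proof and then wave away: the index $i$ is read off at scale $w_{k}/1000$ from $c(F'')$, while $\sigma$ is read off at scale $w_{n}/1000$ from $c(F')$, and $w_{n}\gg w_{k}$. Knowing that $w'$ sits at essentially the same point of the plane in both coordinate systems tells you nothing about the relationship between these two integers --- a point in, say, section $500$ of $B''$ can perfectly well lie in section $1$ of $B$, since $500\cdot\frac{w_{k}}{1000}\ll\frac{w_{n}}{1000}$. Your closing sentence, ``one checks it cannot shift $i$ by more than $1$,'' is precisely the step that is false without the same-index intersection condition; no amount of bookkeeping of the $\frac{1}{100}r$, $20h_{k}$, and slope slacks will repair it, because the obstruction is the scale mismatch itself, not the error terms.
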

\begin{proof}
[Proof of the Claim] Since $S\in\mathcal{R}_{C\cup\{F\}}(B)$, we
know that $S\cap B^{(i)}\neq\emptyset$ (where $i$ is the section
number of $S$). Thus, $w'\in B^{(\sigma)}$ and $d(w',S)<\frac{40}{1000}w_{k}$
(because $w'\in20S$) implies 
\[
d(B^{(\sigma)},(B^{(i)}))\leq d(w',S)+\diam S<\frac{40}{1000}w_{k}+(h_{k}+\frac{1}{1000}w_{k})\ll w_{n}
\]
Since non-adjacent sections of $B$ are at least $\frac{1}{1000}w_{n}$
apart, we may conclude that $|i-\sigma|\leq1$.
\end{proof}
Let $\gamma'$ denote the portion of $\gamma$ starting from $w'$
(but not including $w'$) and continuing as far as possible towards
the center $c(F)$ of $F$, ending in the short side of $B$ nearest
to $c(F)$. There we must consider two cases.
\begin{enumerate}
\item The length of $\gamma'$ is less than $2r_{k}$.

In this case, since $w_{n}\gg r_{k}$, the point $w'$ is already
close enough to the central short side of $B$, that $\sigma=1$.
By the claim above, we must have $i\in\{1,2\}$. Let $F''$ denote
the disk of $B''$. Then 
\[
d(S,c(F''))<\frac{1}{1000}w_{k}+\frac{1}{10}r_{k}<\frac{1}{3}r_{k}
\]
and since $d(w',S)<\frac{40}{1000}w_{k}$ (because $w'\in20S$), we
have 
\[
d(w',c(F''))<\frac{1}{2}r_{k}
\]
Since $F''$ was added in Step A, we know that the inner $1/2$ of
the disk $F''$ is contained in $\widehat{E}_{F''}=\widehat{E}_{y}$,
and we conclude that $w'\in\widehat{E}_{y}$. But by the previous
claim, this is impossible.
\item $\gamma'$ extends more than $2r_{k}$ towards the center of $B$.

Then, arguing as in Step A, there is a point on $\gamma'$ closer
than $r_{k}/10$ to $c(B'')$.

Since $B'$ was added in Step A, this means that $\gamma'\cap\widehat{E}_{y}\neq\emptyset$.

Therefore, since $\widehat{E}$ separates $\widehat{E}_{x}$ from
$\widehat{E}_{y}$, if $\gamma'\cap\widehat{E}_{x}\neq\emptyset$
then $\gamma'\cap\widehat{E}\neq\emptyset$.

But $\gamma'\cap\widehat{E}=\emptyset$, since any point in $\gamma'\cap\widehat{E}$
would be closer to $c(F)$ than $w'$, contradicting the definition
of $w'$.

We conclude that $\gamma'\subseteq\widehat{E}_{y}$, and consequently
(since $w'\in\overline{\gamma'}\subseteq\widehat{E}\cup\widehat{E}_{y}$
and $w'\notin\widehat{E}_{y}$) we have $w'\in\widehat{E}$.

Since $n\geq n_{0}$, it follows that $\widehat{E}_{y}$, and hence
also $\gamma'$, has diameter $<r_{n}/10$. 

Therefore the distance of $w'$ (one end of $\gamma'$) from the short
edge of $B$ that is closer to $c(F)$, is $<\frac{1}{10}r_{n}=\frac{1}{9}w_{n}$. 

It follows that $\sigma=\sigma(w')\leq\frac{1}{9}\cdot1000$ so $\sigma\leq111$.

Therefore, since $|i-\sigma|\leq1$, we have $i\leq112$.

On the other hand, $B''$ was added in Step A, so $d(c(F''),\widehat{E})>\frac{2}{3}r_{k}$,
and hence $d(c(F''),w')>\frac{2}{3}r_{k}$. Since $w'\in20S$ we have
$d(S,c(F''))>\frac{2}{3}r_{k}-20\diam(S)>\frac{1}{2}r_{k}$. This
clearly excludes $S$ belonging to the first $200$ sections of $B''$,
say (because these are all closer then $\frac{1}{2}r_{n}$ to $c(F'')$),
contradicting the property $i<112$.
\end{enumerate}
dvBoth cases lead to a contradiction, so $w'\notin20S$ and hence $w'$
is safe relative to $\mathcal{R}_{C\cup\{F'\}}(B)$. We have shown
that $C\cup\{F'\}$ is a certificate. 

We again define $z$ at the sites $w'$ above so as to restore compatibility,
this process is the same as in Step A.

\subsection*{Step~C: Small frames relatively near the boundary}
\begin{quote}
For each $1\leq n<n_{0}$ in turn, add to $C$ those $n$-frames $F\in C_{x}\cup C_{y}$
whose center $u$ satisfies $\frac{1}{4}r_{n}\leq d(u,\mathbb{R}^{2}\setminus\widehat{E}_{F})\leq\frac{2}{3}r_{n}$,
subject to the modifications below.

\end{quote}
What we keep from these frames is only their positions (i.e.~their
center), and completely redefine their boxes and witnesses. 

First, we observe that adding frames with these centers does not violate
the $r_{n}/2$ separation condition of $n$-frames. Indeed, since
$n\leq n_{0}$, at the end of Step C all $n$-frames in $C$ have
come from steps A and C of the construction (since Step B only adds
frames at levels higher than or equal to $n_{0}).$ Frames that came
from the same certificate are separated by assumption, whereas if
they came from different certificates then the centers are in their
own zones and $r_{n}/4$-far from $\widehat{E}$, hence they are $r_{n}/2$
from each other (see Step A item (1)).

It remains to explain how to define the boxes and witnesses of an
$n$-frame $F\in C_{x}\cup C_{y}$ added in Stage C.

Since $n<n_{0}$, the diameter of $\widehat{E}$ is at least $r_{n}/10$,
and by assumption $d(u,\mathbb{R}^{2}\setminus\widehat{E}_{F})\leq\frac{2}{3}r_{n}$.
This implies that $\widehat{E}\cap F$ has a connected component $\widehat{E}'$
with diameter at least $\frac{1}{10}r_{n}$. Furthermore, since $\frac{1}{4}r_{n}\leq d(u,\mathbb{R}^{2}\setminus\widehat{E}_{F})$,
we know that $\widehat{E}'\subseteq F\setminus\frac{1}{4}F$.

By Lemma \ref{lem:existence-of-many-almost-radial-rectangles}, we
can find $N_{n}+1$ rectangles $R_{i}$ that are $100h_{n}$-separated
and almost radial with respect to $F$, have a common orientation
$\theta\in\Theta$, and such that $\widehat{E}'$ intersects both
long edges of $R_{i}$.

Let $W_{>n}$ denote the set of witnesses $w\in C$ whose level $k$
is greater than $n$, and such that $w\in20S$ for some section $S$
of one of the rectangles $R_{i}$.

Then $W_{>n}$ contains at most one witness $w$, because $h_{n+1}\gg r_{n}$,
and by Lemma \ref{lem:separation-of-witnesses-with-identical-orientations-and-section},
the witnesses at levels $>n$ are $h_{n+1}$-separated, while all
the rectangles $R_{i}$ (and hence their sections) lie within a single
$r_{n}$-ball. 

Because the $R_{i}$ are $100h_{n}$-separated, it follows that for
all but at most one of them, all sections $S$ of $R_{i}$ satisfy
$w\notin20S$. If there is an exceptional one, throw it out.

We are left with at least $N_{n}$ ``good'' rectangles which we
denote $B_{1},\ldots,B_{N_{n}}$; these are the new boxes in $F$.

Apply Proposition \ref{prop:safe-paths} to each $B_{i}$ to find
a path $\gamma$ connecting its short edges and contained in the safe
points of $\mathcal{R}_{C}(B_{i})$. Since $\widehat{E}'$ connects
the long edges of $B_{i}$ we can choose $w\in\gamma\cap\widehat{E}'$;
this is the new witness of $B_{i}$. 

To conclude this step, we note that all witnesses $w$ in $F$ lie
in $\widehat{E}$ so we can define $\pi_{(\theta(w),\sigma(w))}(z_{w})$
to get any coin-and-bucket configuration we could want. We choose
an unorientable one.

\subsection*{Interlude: How dense are the frames so far?}

Taking stock, for $n\geq n_{0}$ the centers of $n$-frames in $C$
are the same as the centers of $n$-frames in $C_{x}$, so they are
$10r_{n}$ dense. But for $n<n_{0}$, although we have added to $C$
many frames from $C_{x},C_{y}$, we have not added those whose centers
come too close to the opposite zones. Therefore, for $1\leq n<n_{0}$,
the centers of $n$-frames in $C$ may not be $10r_{n}$-dense. 

We cannot directly add the missing frames from levels $\leq n_{0}$
because, being close to the opposite zone, they may come too close
to frames from the opposite zone that are already in $C$.

However, if there are frames close to the boundary, then by moving
them slightly away from the boundary (possibly several times, in different
directions), we can restore both separation and density. This is the
final stage of the construction.

\subsection*{Step~D: Adding frames near the boundary}
\begin{quote}
For each $1\leq n<n_{0}$, iterate over all sites $v_{1},v_{2},\ldots\in\mathbb{Z}^{d}$.
For each $v_{i}$ in turn, if it is not within $10r_{n}$ of the center
of some $n$-frame in $C$, we will add a new $n$-frame to $C$.
Its location is obtained by selecting a frame in $C_{x}\cup C_{y}\setminus C$
that is within $10r_{n}$ of $v_{i}$ and shifting it towards $v_{i}$,
as described below.
\end{quote}
Suppose $n$ is given and $v\in\mathbb{Z}^{d}$ is not within $10r_{n}$
of the center of any $n$-frame in $C$. Since $C_{x},C_{y}$ are
certificates, we know that there are frames $F_{x}\in C_{x}$ and
$F_{y}\in C_{y}$ whose centers are within $10r_{n}$ of $v$. Set
$F=F_{y}$ if $v\in E_{y}$ and $F=F_{x}$ otherwise. We deal here
with the latter case, the former is dealt with similarly.

Write $u=c(F)$ for the center of $F$. Then $d(u,\widehat{E})<\frac{1}{4}r_{n}$.
Indeed, we have assumed that $n<n_{0}$, and $d(u,\widehat{E})\leq\frac{2}{3}r_{n}$
because $F$ was not added to $C$ in Step A; these two facts, and
the fact that it was not added in Step C, imply that $d(u,\widehat{E})<\frac{1}{4}r_{n}$.

Let $e\in\widehat{E}$ be a point with $d(e,u)\leq\frac{1}{4}r_{n}$. 

Let $\ell$ denote the ray starting at $u$ and passing through $v$,
and let $u'\in\ell$ be the point closest to $u$ that satisfies $d(e,u')=\frac{3}{4}r_{n}$.
Note that $d(u',u)\geq\frac{1}{2}r_{n}$ because after moving a distance
$t<\frac{1}{2}r_{n}$ along $\ell,$we are at a point that is less
than $\frac{1}{4}r_{n}+t\leq\frac{1}{4}r_{n}+\frac{1}{2}r_{n}<\frac{3}{4}r_{n}$
from $e$.

We define a new $n$-frame $F'$ centered $u'$. Its boxes and witnesses
are defined similarly to the way they were defined in Step C, noting
that since $d(e,u')=\frac{3}{4}r_{n}$ and $\diam\widehat{E}\geq\frac{1}{10}r_{n}$
we are in a position to apply Lemma \ref{lem:existence-of-many-almost-radial-rectangles}
again. Also, note that $u'=c(F')$ is not $r_{n}/2$-close to the
center of any previously added $n$-frame in $C$, because if $F''$
were such a frame, then its center $u''$ would satisfy 
\[
d(u'',v)\leq d(u'',u')+d(u',v)\leq\frac{1}{2}r_{n}+(d(u,v)-d(u',u))<\frac{1}{2}r_{n}+(10r_{n}-\frac{1}{2}r_{n})=10r_{n}
\]
Then $u''=c(F'')$ would already be $10r_{n}$-close to $v$, contrary
to assumption.

All that is left now is to add $F'$ to $C$, and extend the definition
of $z$ to be compatible with it.

This concludes the construction.

\section{\label{sec:Concluding-remarks}Concluding remarks and problems}

We end with some questions that arise naturally from this work. Some
were already mentioned in the introduction. 
\begin{problem}
Do there exist strongly irreducible $\mathbb{Z}^{2}$ subshifts on
which $\mathbb{Z}^{2}$ acts freely?
\end{problem}
Indeed, one can show that our example contains points with one direction
of periodicity, so it does not provide an answer to the problem above.
Ronnie Pavlov \cite{Pavlov2023} has an example of a free square mixing
system, but this is still far from strong irreducibility.

One mechanism related to periodicity an freeness of the action is
the complexity of individual points in the system. In this vein we
can ask:
\begin{problem}
Given $c,\alpha>0$, is there a SI $\mathbb{Z}^{d}$ subshift $X$
such that 
\[
\forall x\in X\qquad N_{n}(x)\geq c2^{n^{\alpha}}
\]
or even
\[
\forall x\in X\qquad N_{n}(x)\geq c\cdot n^{\alpha}
\]
where $N_{n}(x)$ is the number of $n\times n$ patterns in $x$ ?
\end{problem}
The existence of a system satisfying the first condition with $\alpha>1$
would give a positive solution to the previous problem. At the other
extreme, a system that fails the second condition for $\alpha=2$
and small enough $c$ must contain periodic points, by results related
to Nivat's conjecture.

Another natural question is:
\begin{problem}
Let $d\geq3$. If $X$ is a SI $\mathbb{Z}^{d}$ SFT and $Z$ is an
infinite free minimal $\mathbb{Z}^{d}$-subshift and $h_{top}(Z)<h_{top}(X)$,
is there an embedding (an injective factor map) $Z\rightarrow X$
?
\end{problem}
Lightwood proved this for $\mathbb{Z}^{2}$. For $d\geq3$, Bland
\cite{Bland2022} proved that an embedding exists provided there is
some factor map $Z\rightarrow X$.  Also note that if we drop the
assumption that $X$ is free, then a positive answer to the last problem
would imply a negative one to the first problem and some cases of
the second.

As noted in the introduction, a SI $\mathbb{Z}^{2}$ SFT contains
periodic points. But the class of sofic shifts (symbolic factors of
SFTs is much broader. So it is natural to ask,
\begin{problem}
Does there exist a SI $\mathbb{Z}^{2}$ sofic shift without periodic
points?
\end{problem}
The extending SFT need not be SI itself. Our construction is a natural
candidate, since one could try to build an SFT that encodes the certificates,
and the factor map to $X$ would erase them. Our intuition is that
this probably cannot be done with our example, but perhaps some modification
of it would work.

Finally we repeat a well known open problem:
\begin{problem}
Do SI $\mathbb{Z}^{3}$-SFTs contain periodic points?
\end{problem}
This was one of the questions we began with. As noted in the introduction,
Theorem \ref{thm:main} lends some weak moral support for a negative
answer, but the problem remains very much open.

%%% AUTHOR:
%%% Bibliography goes here. Note that the arXiv cannot process bibtex
%%% or biber bibliographies.  Example of acceptable bibliograpy format:
\bibliographystyle{amsplain}

%% AUTHOR: You can generate such a bibliography from a .bib file by 
%% running pdflatex/bibtex/pdflatex/pdflatex and then pasting the .bbl file
%% between \begin{thebibliography} and \end{bibliography}

%%% AUTHOR: Include a short description of each author following the
%%% structure below. Use the same short tags used previously.  
%%% Use \imageat{} and \imagedot{} instead of "@" and "." in
%%% email addresses-this replaces the symbols with graphics to avoid 
%%% e-mail address harvesting from the .pdf file
\begin{dajauthors}
\begin{authorinfo}[hochman]
  Department of Mathematics\\
  The Hebrew University of Jerusalem\\
  Jerusalem 91904, Israel\\ 
  michael.hochman\imageat{}mail\imagedot{}huji\imagedot{}ac\imagedot{}il
\end{authorinfo}
\end{dajauthors}

\end{document}